\numberwithin{equation}{section}
 \newtheorem{theorem}{Theorem}[section]
\newtheorem{lemma}[theorem]{Lemma}
\newtheorem{remark}[theorem]{Remark}
\newtheorem{assumption}[theorem]{Assumption}
\begin{document}
\title{An immersed Raviart-Thomas mixed finite element method for elliptic interface problems on unfitted meshes}
\author{
Haifeng Ji\footnotemark[1]
}
\footnotetext[1]{School of Science, Nanjing University of Posts and Telecommunications, Nanjing, Jiangsu 210023, China  (hfji@njupt.edu.cn)}

\date{}
\maketitle

\begin{abstract}
This paper presents a lowest-order immersed Raviart-Thomas mixed triangular finite element method for solving elliptic interface problems on unfitted meshes independent of the interface.  In order to achieve the optimal convergence rates on unfitted meshes, an immersed finite element finite (IFE) is constructed by modifying the traditional Raviart-Thomas element. Some important properties are derived including the unisolvence of IFE basis functions, the optimal approximation capabilities of the IFE space and the corresponding commuting digram. Optimal error estimates are rigorously proved for the mixed IFE method and some numerical examples are also provided to validate the theoretical analysis.
\end{abstract}

\textbf{keyword}: interface problem, mixed finite element, immersed finite element, unfitted mesh

\textbf{AMS subject classification.} 65N15, 65N30, 35J60

\section{Introduction}
Let $\Omega\subset\mathbb{R}^2$  be a convex polygonal domain and $\Gamma$  be a $C^2$-smooth interface immersed in $\Omega$.  Without loss of generality, we assume that  $\Gamma$  divides $\Omega$ into two disjoint sub-domains $\Omega^+$ and $\Omega^-$ such that $\Gamma=\partial\Omega^-$, see Figure~\ref{interfacepb} for an illustration. 
We consider  the following second-order elliptic interface problem
\begin{align}
-\nabla\cdot(\widetilde{\beta}(\boldsymbol{x})\nabla u(\boldsymbol{x}))&=f(\boldsymbol{x})  \qquad\mbox{in } \Omega\backslash\Gamma,\label{p1.1}\\
[u]_{\Gamma}(\boldsymbol{x})&=0~~~~ \qquad\mbox{on } \Gamma,\label{p1.2}\\
[\widetilde{\beta}\nabla u\cdot \textbf{n}]_{\Gamma}(\boldsymbol{x})&=0~~~~ \qquad\mbox{on } \Gamma,\label{p1.3}\\
u(\boldsymbol{x})&=0 ~~~~\qquad\mbox{on } \partial\Omega,\label{p1.4}
\end{align}
where  $\textbf{n}(\boldsymbol{x})$ is the unit normal vector of the interface $\Gamma$ at point $x\in\Gamma$ pointing toward $\Omega^+$, and the notation $[v]_\Gamma$  is defined as
\begin{equation*}
[v]_{\Gamma}(\boldsymbol{x}):= v^+|_\Gamma-v^-|_\Gamma\quad \mbox{ with } ~v^s=v|_{\Omega^s}, ~s=+,-.
\end{equation*}
The coefficient $\beta(\boldsymbol{x})$  can be  discontinuous across the interface $\Gamma$ and is assumed to be piecewise smooth
\begin{align}\label{p1.5}
\widetilde{\beta}(\boldsymbol{x})=\widetilde{\beta}^+(\boldsymbol{x}) ~~\mbox{ if } x\in\Omega^+~~~ \mbox{ and }~~~ \widetilde{\beta}(\boldsymbol{x})=\widetilde{\beta}^-(\boldsymbol{x}) ~~\mbox{ if } x\in\Omega^-,
\end{align}
with $\widetilde{\beta}^s(\boldsymbol{x})\in C^1(\overline{\Omega^s})$, $s=+,-$. We also assume that there exist two positive constants $\widetilde{\beta}_{min}$ and $\widetilde{\beta}_{max}$  such that $0<\widetilde{\beta}_{min}\leq \widetilde{\beta}^s(\boldsymbol{x})\leq\widetilde{\beta}_{max}$ for all $x\in \overline{\Omega^s}$, $s=+,-$.


\begin{figure} [htbp]
\centering
\begin{tikzpicture}[scale=2]
\draw  (-1,-1)--(1,-1);
\draw (-1,1)--(1,1);
\draw (-1,-1)--(-1,1);
\draw (1,-1)--(1,1);
\draw[thick][rotate=120] (0,0) ellipse [x radius=0.4, y radius=0.65];
\node at (0,0) {$\Omega^-$};
\node at (-0.75,0.75) {$\Omega^+$};
\node at (0.5,0.55) {$\Gamma$};
\end{tikzpicture}
\qquad\quad
\begin{tikzpicture}[scale=2]
\draw  (-1,-1)--(1,-1);
\draw (-1,-0.5)--(1,-0.5);
\draw (-1,0)--(1,0);
\draw (-1,0.5)--(1,0.5);
\draw (-1,1)--(1,1);
\draw (-1,-1)--(-1,1);
\draw (-0.5,-1)--(-0.5,1);
\draw (0,-1)--(0,1);
\draw (0.5,-1)--(0.5,1);
\draw (1,-1)--(1,1);
\draw (-1,1)--(1,-1);
\draw (-0.5,1)--(1,-0.5);
\draw (0,1)--(1,0);
\draw (0.5,1)--(1,0.5);
\draw (-1,0.5)--(0.5,-1);
\draw (-1,0)--(0,-1);
\draw (-1,-0.5)--(-0.5,-1);
\draw[thick][rotate=120] (0,0) ellipse [x radius=0.4, y radius=0.65];
\end{tikzpicture}
 \caption{Left: a diagram of the geometries of an interface problem; Right: an unfitted mesh}\label{interfacepb} 
\end{figure}
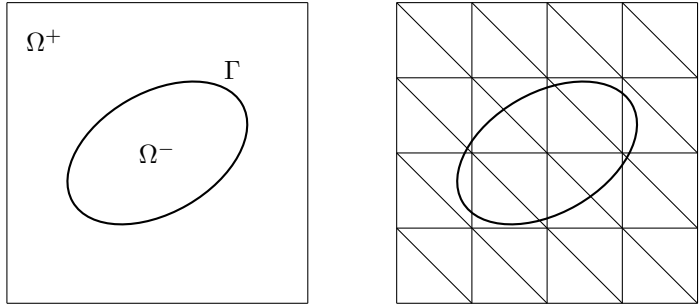

The interface problem arises in many applications. Traditional finite element methods require the mesh to be aligned with the interface to guarantee the optimal convergence rates, see for example \cite{babuvska1970finite,0xu,bramble1996finite,chen1998finite,2014A}. For  complicated interfaces or moving interfaces, unfitted meshes, which are not necessarily aligned with interfaces (see Figure~\ref{interfacepb} for an illustration), have  some advantages over  interface-fitted meshes and have become highly attractive for solving interface problems. The first attempt to use unfitted meshes for solving problems with irregular boundaries dates back to Peskin's immersed boundary method \cite{Peskin1977Numerical} which is in the  finite difference framework.  
The design and analysis of finite element methods on unfitted meshes with optimal convergence rates was started in \cite{Barrett1984,Barrett1987}. Since then, many unfitted mesh finite element methods have been developed, for example, the unfitted Nitsche's method \cite{hansbo2002unfitted,2012An,2016High,2018Robust}, the extended finite element method \cite{fries2010extended}, the enriched finite element method \cite{Wang2018A},  the multiscale finite method \cite{multi_CHU2010}, the finite element method for high-contrast problems \cite{GuzmanJSC2017} and the immersed finite element (IFE) methods \cite{li1998immersed,Li2003new,li2004immersed,he2012convergence,He2008,Guojcp2020,GuoIMA2019,hou2005numerical}, to name only a few. In this paper, we focus on the IFE method.   The basic idea of IFEs is to modify traditional finite element space to recover the optimal approximation capabilities on unfitted meshes. Differing from other unfitted mesh finite element methods, the IFE method has the same degrees of freedom as that of traditional finite element methods and it can reduce to the traditional finite element method when the interface disappears.  In other words, the IFE spaces are isomorphic to the standard finite element spaces defined on the same mesh, which is an important property for moving interface problems \cite{guo2021SIAM}. 

All the above mentioned unfitted methods are designed for solving the primary variable $u$. In many engineering applications, in contrast to the primary variable $u$,  the flux $\widetilde{\beta}(\boldsymbol{x})\nabla u$ is an important quantity of particular interest. The mixed finite element, see Brezzi and Fortin \cite{Brezzi}, is an efficient method that discretizes the flux variable directly and can preserve fluxes within each element.

However, to the best of our knowledge, there are no known works on IFE methods based on mixed finite elements. This paper is devoted to  develop a mixed finite element method on unfitted meshes following the idea of IFEs. The major challenge is the construction of IFE space for $H(\mathrm{div};\Omega)$ with optimal approximation capabilities and the corresponding theoretical analysis. We consider the well-known mixed method of Raviart-Thomas \cite{RT1977}. Similar to the $P_1$ conforming finite element, functions in the standard Raviart-Thomas finite element space also cannot approximate the exact solution optimally on these elements cut by the interface (called interface elements) due to the interface conditions. By deriving a new interface condition for the flux, we modify the standard Raviart-Thomas shape functions on interface elements to recover the optimal approximation capabilities. We give an explicit formula for the modified shape functions (called IFE shape functions) associated with the degrees of the freedom on edges and prove the unisolvence under a maximum angle condition. Furthermore, the optimal approximation capabilities of the IFE space are derived for the problem with variable coefficients. We also show that the commuting digram for standard Raviart-Thomas elements  also holds for the IFEs. 

One drawback of the IFE space is that it does not belong to $H(\mathrm{div};\Omega)$ because the normal components of IFE functions may be discontinuous across the edges cut by the interface (called interface edges).  One approach to overcome the nonconformity is to add consistent terms locally on interface edges to the bilinear form, and therefore a penalty term should also be included simultaneously to ensure the stability (see \cite{taolin2015siam,2021ji_IFE}). However, for the lowest-order Raviart-Thomas mixed finite element method, we find that the consistent term is zero. Thus, the IFE method is stable without requiring any penalty terms. Unfortunately, we show that the IFE method without penalties or with a conventional penalty term only has suboptimal convergence rates due to the discontinuities of normal components of IFE functions. To overcome the difficulty, we apply an over-penalization only on interface edges which is similar to the approach in \cite{2008Brenner}.  Optimal error estimates are derived rigorously under a slight stronger regularity assumption of the exact solution. The hidden constant in the analysis is independent of the interface location relative to the mesh, which is important for analyzing unfitted methods for interface problems  since interfaces may cut meshes in an arbitrary way.  Some numerical examples are also provided to validated our theoretical analysis.

The paper is organized as follows. In Section~\ref{sec_mix_form}, we introduce the mixed variational formulation and derive interface conditions for the flux. In Section~\ref{sec_IFE}, we first construct the IFE space based on the Raviart-Thomas element and then present the IFE method. In Section~\ref{sec_pro_IFE}, some important properties of the IFE are discussed including the unisolvence of IFE basis functions, the optimal approximation capabilities of the IFE space and the corresponding commuting digram. In Section~\ref{sec_error}, we derive optimal error estimates for the proposed IFE method.  Numerical examples are presented  in Section~\ref{sec_num} to validate our theoretical analysis.   We conclude in the last section.

\section{The mixed variational formulation}\label{sec_mix_form}
By introducing another unknown 
\begin{equation}\label{defp}
\boldsymbol{p}(\boldsymbol{x}):=\widetilde{\beta}(\boldsymbol{x})\nabla u(\boldsymbol{x})\qquad\forall x\in\Omega\backslash\Gamma
\end{equation}
and defining 
\begin{equation*}
\beta(\boldsymbol{x}):=(\widetilde{\beta}(\boldsymbol{x}))^{-1},~ \beta^s(\boldsymbol{x}):=(\widetilde{\beta}^{s}(\boldsymbol{x}))^{-1}, s=+,-,
\end{equation*}
 the interface problem (\ref{p1.1})-(\ref{p1.4})  can be rewritten as 
\begin{align}
\beta\boldsymbol{p}-\nabla u&=0\qquad\mbox{ in } \Omega\backslash\Gamma, \label{p2.1}\\
-\nabla\cdot \boldsymbol{p} &=f  \qquad\mbox{ in } \Omega\backslash\Gamma,\label{p2.2}\\
[u]_{\Gamma}&=0 \qquad\mbox{ on } \Gamma,\label{p2.3}\\
[\boldsymbol{p}\cdot \mathbf{n}]_{\Gamma}&=0 \qquad\mbox{ on } \Gamma,\label{p2.4}\\
u&=0 \qquad\mbox{ on } \partial\Omega.\label{p2.5}
\end{align}
From now on, we use bold letters to denote vector-valued functions. Multiplying the equation (\ref{p2.1}) by a function $\boldsymbol{q}\in H(\mathrm{div};\Omega)$ and integrating by parts, we have
\begin{equation*}
\int_\Omega\beta\boldsymbol{p}\cdot\boldsymbol{q}d\boldsymbol{x}+\int_{\Omega} u\nabla \cdot \boldsymbol{q}d\boldsymbol{x}+\int_\Gamma[u]_\Gamma \boldsymbol{q}\cdot \mathbf{n}ds-\int_{\partial\Omega}u\boldsymbol{q}\cdot\textbf{n}_{\partial\Omega}ds=0,
\end{equation*}
where $\textbf{n}_{\partial\Omega}$ is the unit normal vector of $\partial\Omega$ exterior to $\Omega$, and 
\begin{equation*}
H(\mathrm{div}; \Omega):=\{ \boldsymbol{q}\in (L^2(\Omega))^2 ~: ~\mathrm{div} ~ \boldsymbol{q}\in L^2(\Omega) \}
\end{equation*}
with norm defined by 
\begin{equation*}
\| \boldsymbol{q}\|_{H(\mathrm{div}; \Omega)}^2:=\| \boldsymbol{q} \|^2_{L^2(\Omega)}+\|\mathrm{div}~ \boldsymbol{q}\|^2_{L^2(\Omega)}.
\end{equation*}
It follows from the interface condition (\ref{p2.3}) and the boundary condition (\ref{p2.5})  that
\begin{equation*}
\int_\Omega\beta\boldsymbol{p}\cdot\boldsymbol{q}d\boldsymbol{x}+\int_{\Omega} u\nabla \cdot \boldsymbol{q}d\boldsymbol{x}=0\qquad\forall \boldsymbol{q}\in H(\mathrm{div}; \Omega).
\end{equation*}
Let $\boldsymbol{p}^s=\boldsymbol{p}|_{\Omega^s}$, $s=+,-$. Obviously, $\boldsymbol{p}^s\in H(\mathrm{div};\Omega^s)$, $s=+,-$. By the condition (\ref{p2.4}), we conclude $\boldsymbol{p}\in H(\mathrm{div};\Omega)$. 
Let 
\begin{equation*}
a(\boldsymbol{p},\boldsymbol{q}):=\int_\Omega\beta\boldsymbol{p}\cdot\boldsymbol{q}d\boldsymbol{x},\quad b(\boldsymbol{q},u):=\int_{\Omega} u\nabla \cdot \boldsymbol{q}d\boldsymbol{x}\quad\mbox{ and }\quad F(v):=-\int_{\Omega}fvd\boldsymbol{x},
\end{equation*}
then the mixed variational formulation for the interface problem (\ref{p2.1})-(\ref{p2.5}) reads: 
find $(\boldsymbol{p},u)\in H(\mathrm{div}; \Omega)\times L^2(\Omega)$ such that
\begin{equation}\label{mix_weak}
\begin{aligned}
a(\boldsymbol{p},\boldsymbol{q})+b(\boldsymbol{q},u)&=0 \qquad &&\forall \boldsymbol{q}\in H(\mathrm{div};\Omega),\\
b(\boldsymbol{p},v)&=F(v)\qquad &&\forall v\in L^2(\Omega).
\end{aligned}
\end{equation}
From \cite{Brezzi}, we have the following well-posedness result for this variational problem.
For any $f\in L^2(\Omega)$, the problem (\ref{mix_weak}) has a unique solution $(\boldsymbol{p},u)\in \left(H(\mathrm{div}; \Omega), L^2(\Omega)\right)$ satisfying 
\begin{equation*}
\|\boldsymbol{p}\|_{H(\mathrm{div}; \Omega)}+\|u\|_{L^2(\Omega)}\leq C\|f\|_{L^2(\Omega)}.
\end{equation*}

Next, we investigate the regularity of the solution $(\boldsymbol{p},u)$. Define  
\begin{equation*}
H^m(\Omega^-\cup\Omega^+):=\{ v : v|_{\Omega^s}\in H^m(\Omega^s), s=+,- \}
\end{equation*}
equipped with the norm $\|\cdot\|^2_{H^m(\Omega^+\cup\Omega^-)}:=\|\cdot\|^2_{H^m(\Omega^+)}+\|\cdot\|^2_{H^m(\Omega^-)}$ and the semi-norm $|\cdot|^2_{H^m(\Omega^+\cup\Omega^-)}:=|\cdot|^2_{H^m(\Omega^+)}+|\cdot |^2_{H^m(\Omega^-)}$. Define a subspace of $H^2(\Omega^-\cup\Omega^+)$ as
\begin{equation}\label{def_H2}
\widetilde{H}^2(\Omega):=\{v\in H^2(\Omega^+\cup\Omega^-)  :  [v]_{\Gamma}=0,~~ [\widetilde{\beta}\nabla v\cdot \textbf{n}]_{\Gamma}=0\}.
\end{equation}
It is well-known that the interface problem (\ref{p1.1})-(\ref{p1.4}) has a unique solution 
\begin{equation}\label{regular}
u\in H_0^1(\Omega)\cap \widetilde{H}^2(\Omega) \mbox{ satisfying } \|u\|_{H^2(\Omega^+\cup\Omega^-)}\leq C\|f\|_{L^2(\Omega)},
\end{equation}
where the constant $C$ only depends on $\Omega$, $\Gamma$ and $\widetilde{\beta}$ (see \cite{2012Uniform} for problems with piecewise smooth  coefficients and \cite{huang2002some,multi_CHU2010} for problems with piecewise constant coefficients).

Note that $\widetilde{\beta}^s(\boldsymbol{x})\in C^1(\overline{\Omega^s})$, $s=+,-$. From (\ref{regular}), we immediately have 
\begin{equation}\label{p_h1}
\boldsymbol{p}|_{\Omega^s}\in (H^1(\Omega^s))^2,~ s=+,-.
\end{equation} 
Let $\mathbf{t}(\boldsymbol{x})$ be the unit tangent vector of $\Gamma$ obtained by a $90^\circ$ clockwise rotation of $\mathbf{n}(\boldsymbol{x})$, i.e.,
\begin{equation}\label{rotation}
\mathbf{t}(\boldsymbol{x})=R_{-\frac{\pi}{2}}\mathbf{n}(\boldsymbol{x}), \mbox{ where }
R_\alpha=\begin{bmatrix}
   \cos \alpha   &   -\sin \alpha \\
    \sin \alpha  &    \cos \alpha
\end{bmatrix}.
\end{equation}
From the interface condition (\ref{p2.3}), we know $[\nabla u\cdot\mathbf{t}]_\Gamma=0$ on $\Gamma$, 
which together with (\ref{defp}) yields another interface condition for $\boldsymbol{p}$, i.e.,
\begin{equation}\label{p_tangentjp}
[\beta\boldsymbol{p}\cdot \mathbf{t}]_{\Gamma}(\boldsymbol{x})=0 \qquad\forall x\in \Gamma.
\end{equation}
If we define a subspace of $H(\mathrm{div};  \Omega)$ as
\begin{equation}\label{def_space_div_tilde}
\widetilde{H}^1(\mathrm{div};  \Omega):=\{\boldsymbol{q} \in H(\mathrm{div};\Omega)~:~  \boldsymbol{q}|_{\Omega^s}\in (H^1(\Omega^s))^2,~ s=+,-,~  [\beta\boldsymbol{q}\cdot \mathbf{t}]_{\Gamma}=0\},
\end{equation}
  then it follows  from (\ref{p_h1}) and (\ref{p_tangentjp}) that 
\begin{equation}\label{regular_p}
\boldsymbol{p}\in \widetilde{H}^1(\mathrm{div};  \Omega).
\end{equation}
\section{The immersed Raviart-Thomas mixed finite element method}\label{sec_IFE}
Let $ \{\mathcal{T}_h\}_{h>0}$ be a family of conforming triangulations of $\Omega$ with meshsize  $h:=\max_{T\in\mathcal{T}_h}h_T$, where $h_T$ is the diameter of $T\in\mathcal{T}_h$. We assume that $\mathcal{T}_h$ is shape regular, i.e., for every $T$, there exists a positive constant  $\varrho$ such that  $ h_T\leq \varrho r_T$ where $r_T$ is the diameter of the largest circle inscribed in $T$.  Denote $\mathcal{E}_h$ as the set of edges of the triangulation, and let $\mathcal{E}^b_h:=\{e\in\mathcal{E}_h : e\subset\partial\Omega\}$, $\mathcal{E}^\circ_h:=\mathcal{E}_h\backslash\mathcal{E}^b_h$. We adopt the convention that elements $T\in\mathcal{T}_h$ and edges $e\in\mathcal{E}_h$ are open sets. 
Then, the sets of interface elements and interface edges are defined as
\begin{equation*}
\mathcal{T}_h^\Gamma :=\{T\in\mathcal{T}_h :  T\cap \Gamma\not = \emptyset\}, \qquad\mathcal{E}_h^\Gamma:=\{e\in \mathcal{E}_h : e\cap\Gamma\not=\emptyset \}.
\end{equation*}
We also assume that all interface triangles satisfy the maximum angle condition $\alpha_{max}\leq \pi/2$ which is a sufficient condition for the unisolvence of IFE functions (see Lemma~\ref{lem_basis}). We emphasize that the maximum angle assumption does not restrict the application of IFE methods since we can simply use Cartesian meshes regardless of the location of the interface, which is an advantage over the interface-fitted mesh method.
The set of non-interface elements is denoted by  $\mathcal{T}^{non}_h:=\mathcal{T}_h\backslash\mathcal{T}_h^{\Gamma}$. 
We can alway refine the mesh to  satisfy the following assumption.
\begin{assumption}\label{assum_2}
The interface $\Gamma$ does not intersect the boundary of any interface element at more than two points. The interface $\Gamma$ does not intersect  the closure $\overline{e}$ for any $e\in\mathcal{E}_h$  at more than one point.
\end{assumption}

The interface $\Gamma$ is approximated by $\Gamma_h$ that is composed of all the line segments connecting the intersection points of the triangulation and the interface. In addition, we assume that the approximated interface $\Gamma_h$ divides $\Omega$ into two disjoint sub-domains $\Omega^+_h$ and $\Omega^-_h$ such that $\Gamma_h=\partial \Omega^-_h$.   Let $\textbf{n}_h(\boldsymbol{x})$  be the unit normal vector of $\Gamma_h$ pointing toward $\Omega^+_h$.  The unit  tangent vector of $\Gamma_h$ can be obtained by a $90^\circ$ clockwise rotation of $\mathbf{n}_h(\boldsymbol{x})$, i.e.,
\begin{equation}\label{disth}
\textbf{t}_h(\boldsymbol{x})=R_{-\frac{\pi}{2}}\textbf{n}_h(\boldsymbol{x}).
\end{equation}

\subsection{The IFE space}
On each $T\in\mathcal{T}_h$, define the traditional local Raviart-Thomas space with the lowest-order
\begin{equation*}
\mathcal{RT}(T)=\left\{\boldsymbol{\phi}  : \boldsymbol{\phi}(x_1,x_2)= \begin{bmatrix}
a_T\\
c_T
\end{bmatrix}
+b_T
\begin{bmatrix}
x_1\\
x_2
\end{bmatrix},~ a_T, b_T, c_T \in \mathbb{R}^1
 \right\}.
\end{equation*}
To get the optimal approximation capabilities on interface elements $T\in\mathcal{T}_h^\Gamma$, we need to modify the local Raviart-Thomas space  $\mathcal{RT}(T)$ according to  the  interface conditions of the exact solution $\boldsymbol{p}$. 

Let $T_h^+:=T\cap\Omega_h^+$ and  $T_h^-:=T\cap\Omega_h^-$ for all $T\in\mathcal{T}_h^\Gamma$, see Figure~\ref{interface_ele} for an illustration.
We define a local immersed Raviart-Thomas space  $\mathcal{IRT}(T)$ as the set of the following functions 
\begin{equation}\label{IFE_shape}
\boldsymbol{\phi}(\boldsymbol{x})=\left\{
\begin{aligned}
&\boldsymbol{\phi}^+(\boldsymbol{x})\in \mathcal{RT}(T)~~ \mbox{ if } \boldsymbol{x}\in T_h^+,\\
&\boldsymbol{\phi}^-(\boldsymbol{x})\in \mathcal{RT}(T) ~~ \mbox{ if } \boldsymbol{x}\in T_h^-,
\end{aligned}
\right.
\end{equation}
satisfying 
\begin{align}
&[\boldsymbol{\phi}\cdot \textbf{n}_h]_{\Gamma_h\cap T}(\boldsymbol{x}):=(\boldsymbol{\phi}^+\cdot \textbf{n}_h)(\boldsymbol{x})-(\boldsymbol{\phi}^-\cdot \textbf{n}_h)(\boldsymbol{x})=0\qquad \forall \boldsymbol{x}\in \Gamma_h\cap T,\label{IFE_con1}\\
&[\beta_T\boldsymbol{\phi}\cdot \textbf{t}_h]_{\Gamma_h\cap T}(\boldsymbol{x}_T):=(\beta_T^+ \boldsymbol{\phi}^+\cdot \textbf{t}_h)(\boldsymbol{x}_T)-(\beta_T^- \boldsymbol{\phi}^-\cdot \textbf{t}_h)(\boldsymbol{x}_T)=0,\label{IFE_con2}\\
&\nabla \cdot \boldsymbol{\phi}^+-\nabla \cdot \boldsymbol{\phi}^-=0.\label{IFE_con3}
\end{align}
Here $\boldsymbol{x}_T$ is an arbitrary point on $\Gamma_h\cap T$,  and $\beta_T(\boldsymbol{x})$ is a piecewise constant which is defined by $\beta_T(\boldsymbol{x})|_{T_h^s}=\beta_T^s$, $s=+,-$. The constants $\beta_T^+$ and $\beta_T^+$ are chosen such that 
\begin{equation}\label{cond_beta}
\|\beta^s(\boldsymbol{x})-\beta_T^s\|_{L^\infty(T\cap\Omega^s)}\leq Ch,~~~s=+,-.
\end{equation}
Actually, we can choose $\beta_T^s=\beta^s(\boldsymbol{x}_T^s)$ with arbitrary points $\boldsymbol{x}_T^s\in T\cap \Omega^s$, $s=+,-$ to satisfy the requirement (\ref{cond_beta}) since $\beta^s(\boldsymbol{x})\in C^1(\overline{\Omega^s})$, $s=+,-$.
\begin{remark}
 Note that $\boldsymbol{\phi}^s$, $s=+,-$ have the property that their normal components along any straight lines are constants. Thus, (\ref{IFE_con1}) only provides one condition although the equality is enforced  over the entire line segment $\Gamma_h\cap T$.  The first two conditions (\ref{IFE_con1}) and (\ref{IFE_con2}) are inspired by (\ref{p2.4}) and (\ref{p_tangentjp}) for continuous problems. 
However, the third condition (\ref{IFE_con3}) is added only for the unisolvence of the IFE basis functions and does not provide any approximation capabilities since we use the lowest-order Raviart-Thomas elements.
\end{remark}


On each element $T\in\mathcal{T}_h$, the local degrees of freedom are defined as
\begin{equation}\label{dof}
N_{i,T}(\boldsymbol{\phi}):=\frac{1}{|e_i|}\int_{e_i}\boldsymbol{\phi}\cdot\textbf{n}_{i,T}ds,\qquad i=1,2,3,
\end{equation}
where $e_i$, $i=1,2,3$ are edges of the element $T$,  $|e_i|$ denotes the length of the edge $e_i$, and $\textbf{n}_{i,T}$ is the unit normal vector of $e_i$ exterior to $T$. The global IFE space $\mathcal{IRT}(\mathcal{T}_h)$  is then defined as the set of all functions satisfying 
\begin{equation*}\left\{
\begin{aligned}
&\boldsymbol{\phi}|_{T}\in \mathcal{RT}(T) \quad&&\forall T\in\mathcal{T}_h^{non},\\
&\boldsymbol{\phi}|_{T}\in \mathcal{IRT}(T)\quad &&\forall T\in\mathcal{T}_h^\Gamma,\\
&\int_{e}[\boldsymbol{\phi}\cdot\textbf{n}_e]_eds=0\qquad &&\forall e\in\mathcal{E}_h^\circ.
\end{aligned}
\right.
\end{equation*}
Here the jump across an edge $e$ is defined by
\begin{equation*}
[\boldsymbol{\phi}\cdot\textbf{n}_e]_e:=(\boldsymbol{\phi}|_{T_1^e}-\boldsymbol{\phi}|_{T_2^e})\cdot\textbf{n}_e,
\end{equation*}
where $\overline{T_1^e}\cap \overline{T_2^e}= \overline{e} $  and $\textbf{n}_e$ is the unit normal vector of the edge $e$ exterior to $T_1^e$.

\begin{remark} 
For any $\boldsymbol{\phi}\in \mathcal{IRT}(\mathcal{T}_h)$ and $e\in\mathcal{E}_h^\Gamma$,  it is easy to see that $\boldsymbol{\phi}|_{T_1^e}\cdot\textbf{n}_{e}$ and $\boldsymbol{\phi}|_{T_2^e}\cdot\textbf{n}_{e}$ are piecewise constants on the edge $e$.  Thus, the condition $\int_{e}[\boldsymbol{\phi}\cdot\textbf{n}_e]_eds=0$ cannot imply
$[\boldsymbol{\phi}\cdot\textbf{n}_{e}]_e=0$.  In other words, $\boldsymbol{\phi}\cdot\textbf{n}_{e}$ may be discontinuous on  all interface edges $e\in\mathcal{E}_h^\Gamma$. Hence, we conclude that the IFE space is nonconforming, i.e.,
$\mathcal{IRT}(\mathcal{T}_h)\not\subset H(\mathrm{div};  \Omega).$
\end{remark}

\subsection{The IFE method}
We define 
\begin{equation}\label{betamaxmin}
\beta_{min}:=\widetilde{\beta}^{-1}_{max},\quad\beta_{max}:=\widetilde{\beta}^{-1}_{min},
\end{equation}
and extend the coefficients $\beta^s(\boldsymbol{x})$, $s=+,-$ smoothly to slight larger domains 
$$\Omega_e^s:=\{x\in \overline{T} :  ~\forall T\in\mathcal{T}_h \mbox{ and } T\cap \Omega^s\not=\emptyset\}, ~s=+,-,$$ such that 
\begin{equation}\label{ext_beta}
\beta^s(\boldsymbol{x})\in C^1(\overline{\Omega_e^s})~~\mbox{ and }~~ \beta_{min} \leq \beta^s(\boldsymbol{x})\leq \beta_{max},~~s=+,-. 
\end{equation}
Thus, there exists a constant $C_\beta$ such that
\begin{equation}\label{new_vari_deri}
\|\nabla \beta^s\|_{L^\infty(\overline{\Omega_e^s})}\leq C_\beta,\qquad s=+,-.
\end{equation}
Note that, if $\beta$ is a piecewise constant function, it holds $C_\beta=0$.
For simplicity of the implementation, we approximate the coefficient $\beta(\boldsymbol{x})$ by
\begin{equation*}
\beta_h(\boldsymbol{x})=\left\{
\begin{aligned}
&\beta^+(\boldsymbol{x}) \quad\mbox{ if } \boldsymbol{x}\in\Omega_h^+,\\
&\beta^-(\boldsymbol{x}) \quad\mbox{ if } \boldsymbol{x}\in\Omega_h^-.\\ 
\end{aligned}\right.
\end{equation*}
Define a piecewise constant space 
\begin{equation*}
M_h:= \{ v\in L^2(\Omega) : v|_T\in \mathbb{R}^1~~\forall T\in\mathcal{T}_h\},
\end{equation*}
and the following discrete bilinear forms
\begin{equation*}
\begin{aligned}
&A_h(\boldsymbol{p_h},\boldsymbol{q}_h):=a_h(\boldsymbol{p_h},\boldsymbol{q}_h)+s_h(\boldsymbol{p_h},\boldsymbol{q}_h), \quad b_h(\boldsymbol{q}_h,u_h):=\int_{\Omega} u_h\nabla_h \cdot \boldsymbol{q}_hd\boldsymbol{x}, \\
&a_h(\boldsymbol{p_h},\boldsymbol{q}_h):=\int_\Omega\beta_h\boldsymbol{p}_h\cdot\boldsymbol{q}_hd\boldsymbol{x}, \quad s_h(\boldsymbol{p_h},\boldsymbol{q}_h):=\eta \sum_{e\in\mathcal{E}_h^\Gamma}\int_e [\boldsymbol{p}_h\cdot\textbf{n}_e]_e[\boldsymbol{q}_h\cdot\textbf{n}_e]_eds,
\end{aligned}
\end{equation*}
where $\eta>0$ is a penalty parameter independent of $h$  and  $\nabla_h\cdot$ is understood in a piecewise sense, i.e.,  $(\nabla_h \cdot \boldsymbol{q}_h)|_T=\nabla  \cdot \boldsymbol{q}_h|_{T}$ for all $T\in\mathcal{T}_h$ since $\boldsymbol{q}_h\not\in H(\mathrm{div}; \Omega)$.  The  immersed Raviart-Thomas mixed finite element method reads: find $(\boldsymbol{p}_h,u_h)\in \mathcal{IRT}(\mathcal{T}_h)\times M_h$ such that
\begin{equation}\label{mix_IFE}
\begin{aligned}
A_h(\boldsymbol{p}_h,\boldsymbol{q}_h)+b_h(\boldsymbol{q}_h,u_h)&=0 \qquad &&\forall \boldsymbol{q}_h\in \mathcal{IRT}(\mathcal{T}_h),\\
b_h(\boldsymbol{p}_h,v_h)&=F(v_h)\qquad &&\forall v_h\in M_h.
\end{aligned}
\end{equation}

The above IFE method  is inconsistent. There are two kinds of inconsistent errors, which are shown in the following lemma. 
 \begin{lemma}
 Let $(\boldsymbol{p},u)$ and $(\boldsymbol{p}_h,u_h)$ be the solutions of problems (\ref{mix_weak}) and (\ref{mix_IFE}), respectively. Then it holds that, for all $\boldsymbol{q}_h\in \mathcal{IRT}(\mathcal{T}_h)$,
 \begin{equation}\label{inconsis}
 \begin{aligned}
 A_h(\boldsymbol{p}-\boldsymbol{p}_h,&\boldsymbol{q}_h)+b_h(\boldsymbol{q}_h,u-u_h)=\left(a_h(\boldsymbol{p},\boldsymbol{q}_h)-a(\boldsymbol{p},\boldsymbol{q}_h)\right)+\sum_{e\in\mathcal{E}_h^\Gamma}\int_eu[\boldsymbol{q}_h\cdot\textbf{n}_{e}]_eds.
 \end{aligned}
 \end{equation}
 \end{lemma}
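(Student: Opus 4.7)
The plan is to rewrite the left-hand side, eliminate $\boldsymbol{p}_h$ and $u_h$ using the discrete equation, and then perform element-wise integration by parts on the continuous side. More precisely, I would start by expanding
\begin{equation*}
A_h(\boldsymbol{p}-\boldsymbol{p}_h,\boldsymbol{q}_h)+b_h(\boldsymbol{q}_h,u-u_h)=A_h(\boldsymbol{p},\boldsymbol{q}_h)+b_h(\boldsymbol{q}_h,u)-\bigl(A_h(\boldsymbol{p}_h,\boldsymbol{q}_h)+b_h(\boldsymbol{q}_h,u_h)\bigr),
\end{equation*}
and observe that the bracketed term vanishes by the first equation of the discrete scheme (\ref{mix_IFE}). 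So everything reduces to showing
\begin{equation*}
A_h(\boldsymbol{p},\boldsymbol{q}_h)+b_h(\boldsymbol{q}_h,u)=\bigl(a_h(\boldsymbol{p},\boldsymbol{q}_h)-a(\boldsymbol{p},\boldsymbol{q}_h)\bigr)+\sum_{e\in\mathcal{E}_h^\Gamma}\int_eu[\boldsymbol{q}_h\cdot\textbf{n}_e]_eds.
\end{equation*}

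Next I would dispose of the stabilization contribution: because $\boldsymbol{p}\in H(\mathrm{div};\Omega)$ by (\ref{regular_p}), the normal trace $\boldsymbol{p}\cdot\textbf{n}_e$ has no jump across any interior edge, so $s_h(\boldsymbol{p},\boldsymbol{q}_h)=0$ and $A_h(\boldsymbol{p},\boldsymbol{q}_h)=a_h(\boldsymbol{p},\boldsymbol{q}_h)$. What remains is to prove
\begin{equation*}
b_h(\boldsymbol{q}_h,u)=-a(\boldsymbol{p},\boldsymbol{q}_h)+\sum_{e\in\mathcal{E}_h^\Gamma}\int_eu[\boldsymbol{q}_h\cdot\textbf{n}_e]_eds.
\end{equation*}

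For this I would integrate by parts element by element. On each non-interface element this is standard since $\boldsymbol{q}_h\in\mathcal{RT}(T)\subset H(\mathrm{div};T)$ and $u\in H^2(T)$. On each interface element $T\in\mathcal{T}_h^\Gamma$, the IFE constraint (\ref{IFE_con1}) ensures that the normal component of $\boldsymbol{q}_h$ is continuous across $\Gamma_h\cap T$, so $\boldsymbol{q}_h|_T\in H(\mathrm{div};T)$, and $u\in H^1(T)$ (it is in $H^2$ on each of $T\cap\Omega^\pm$ and continuous through $\Gamma$ by (\ref{regular})), so integration by parts is again valid. Summing over $T\in\mathcal{T}_h$ gives
\begin{equation*}
b_h(\boldsymbol{q}_h,u)=-\int_\Omega\nabla u\cdot\boldsymbol{q}_h\,d\boldsymbol{x}+\sum_{T\in\mathcal{T}_h}\int_{\partial T}u\,\boldsymbol{q}_h\cdot\textbf{n}_T\,ds.
\end{equation*}

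To finish, I would regroup the boundary integrals by edges. Boundary edges contribute nothing because $u=0$ on $\partial\Omega$. For an interior non-interface edge $e\in\mathcal{E}_h^\circ\setminus\mathcal{E}_h^\Gamma$, the restriction $\boldsymbol{q}_h\cdot\textbf{n}_e$ on either side of $e$ is a constant (the edge lies entirely within $\Omega_h^+$ or $\Omega_h^-$, on which each $\boldsymbol{q}_h$-piece is a standard Raviart--Thomas function), so the zero-mean constraint $\int_e[\boldsymbol{q}_h\cdot\textbf{n}_e]_e\,ds=0$ built into $\mathcal{IRT}(\mathcal{T}_h)$ forces the jump to vanish pointwise and the corresponding contribution to drop out. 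Only the interface edges $\mathcal{E}_h^\Gamma$ survive, yielding exactly $\sum_{e\in\mathcal{E}_h^\Gamma}\int_eu[\boldsymbol{q}_h\cdot\textbf{n}_e]_eds$. Finally, since $\beta\boldsymbol{p}=\nabla u$ in $\Omega\setminus\Gamma$ by (\ref{p2.1}), we get $\int_\Omega\nabla u\cdot\boldsymbol{q}_h\,d\boldsymbol{x}=a(\boldsymbol{p},\boldsymbol{q}_h)$, and combining everything gives (\ref{inconsis}). The main obstacle is the careful edge bookkeeping on interface elements, in particular justifying the element-wise integration by parts on a cut element --- which is precisely what the IFE constraint (\ref{IFE_con1}) is designed to allow --- and verifying that the jumps across non-interface interior edges truly vanish pointwise (not merely in mean).
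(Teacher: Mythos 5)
Your proposal is correct and follows essentially the same route as the paper: test $\beta\boldsymbol{p}-\nabla u=0$ with $\boldsymbol{q}_h$, integrate by parts elementwise, use $[u]_\Gamma=0$, $u|_{\partial\Omega}=0$ and the fact that $[\boldsymbol{q}_h\cdot\textbf{n}_e]_e$ vanishes identically on non-interface interior edges (so only $\mathcal{E}_h^\Gamma$ survives), note $s_h(\boldsymbol{p},\cdot)=0$, and subtract the discrete equation. The only difference is presentational (you subtract the discrete equation first and then derive the consistency identity, while the paper does the reverse), and your added care about why integration by parts is legitimate on cut elements is a welcome clarification of a point the paper leaves implicit.
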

\begin{proof}
Multiplying the equation (\ref{p2.1}) by a function $\boldsymbol{q}_h\in \mathcal{IRT}(\mathcal{T}_h)$ and  integrating by parts on each elements yield 
\begin{equation}\label{pro_inconsis}
\int_\Omega\beta\boldsymbol{p}\cdot\boldsymbol{q}_hd\boldsymbol{x}+\sum_{T\in\mathcal{T}_h}\int_{T} u\nabla \cdot \boldsymbol{q}_hd\boldsymbol{x} -\sum_{e\in\mathcal{E}_h^\Gamma}\int_eu[\boldsymbol{q}_h\cdot\textbf{n}_e]_eds  =0,
\end{equation}
where we have used  (\ref{p2.3}), (\ref{p2.5}) and the fact that the normal component of $\boldsymbol{q}_h$ is only discontinuous on interface edges $e\in \mathcal{E}_h^\Gamma$. Note that $s_h(\boldsymbol{p}-\boldsymbol{p}_h, \boldsymbol{q}_h)=0$, the desired result (\ref{inconsis}) is obtained by subtracting the first equation of (\ref{mix_IFE}) from (\ref{pro_inconsis}).
\end{proof}

\begin{remark}
Different from the interior penalty method, the consistent term cannot be added into the bilinear form because 
$$\sum_{e\in\mathcal{E}_h^\Gamma}\int_eu_h[\boldsymbol{q}_h\cdot\textbf{n}_e]_eds =0 \quad\forall u_h\in M_h ~\forall \boldsymbol{q}_h\in  \mathcal{IRT}(\mathcal{T}_h).$$
We find that if we use a conventional penalty $$\eta \sum_{e\in\mathcal{E}_h^\Gamma}|e|\int_e [\boldsymbol{p}_h\cdot\textbf{n}_e]_e[\boldsymbol{q}_h\cdot\textbf{n}_e]_eds,$$ then the IFE method cannot achieve the optimal convergence rates. 
Guided by \cite{2008Brenner},  we apply the over-penalization $s_h(\cdot,\cdot)$ to overcome the difficulty. The IFE method (\ref{mix_IFE})  is stable for any choice of the penalty parameter even if the penalty parameter $\eta=0$. However,  we need to choose a positive $\eta$ independent of $h$ to ensure the optimal convergence rates (see Theorem~\ref{theo_error}).
\end{remark}

\section{Properties of the IFE space}\label{sec_pro_IFE}
In this section, we discuss  some important properties of the IFE space $\mathcal{IRT}(\mathcal{T}_h)$. We begin with some interpolation operators.
On each element $T\in\mathcal{T}_h$, define a local interpolation operator  $\Pi_{T}: W(T)\rightarrow \mathcal{RT}(T)$ such that
\begin{equation*}
N_{i,T}(\Pi_{T} \boldsymbol{q})=N_{i,T}(\boldsymbol{q}),\quad i=1,2,3,
\end{equation*}
where $W(T)= H(\mathrm{div}; T)\cap (L^s(T))^2$ with a fixed $s>2$.
Similarly, on each interface element $T\in\mathcal{T}_h^\Gamma$, define  $I^{IFE}_{h,T}: W(T)\rightarrow \mathcal{IRT}(T)$ such that
\begin{equation}\label{local_IFE_inter}
N_{i,T}(\Pi_{T}^{IFE} \boldsymbol{q})=N_{i,T}(\boldsymbol{q}),\quad i=1,2,3.
\end{equation}
 The global IFE interpolation operator now is $\Pi_h^{IFE}: W(\Omega) \rightarrow \mathcal{IRT}(\mathcal{T}_h)$  such that 
\begin{equation}\label{inter_IFEoper_def}
 (\Pi_h^{IFE}v)|_{T}=\left\{
\begin{aligned}
&\Pi_{T}^{IFE}v\quad&&\mbox{ if } ~T\in\mathcal{T}_h^\Gamma,\\
&\Pi_{T} v&&\mbox{ if }~ T\in\mathcal{T}_h^{non},\\
\end{aligned}\right.
\end{equation}
where $W(\Omega)= H(\mathrm{div}; \Omega)\cap (L^s(\Omega))^2$ with a fixed $s>2$.
 We also define the standard Raviart-Thomas finite element space 
\begin{equation}\label{rt_space}
\mathcal{RT}(\mathcal{T}_h)=\{\boldsymbol{q}_h\in H(\mathrm{div};  \Omega) :  \boldsymbol{q}_h\in \mathcal{RT}(T) \}
\end{equation}
and a corresponding interpolation operator $\Pi_h: W(\Omega) \rightarrow \mathcal{RT}(\mathcal{T}_h)$ such that 
\begin{equation}\label{inter_oper_def}
(\Pi_hv)|_{T}=\Pi_{T} v \quad\forall T\in\mathcal{T}_h.
\end{equation}

Note that  the local interpolation operator $\Pi_T$ is well-defined because $\boldsymbol{\phi}\in \mathcal{RT}(T)$ is uniquely determined by $N_{i,T}(\boldsymbol{\phi})$, $i=1,2,3$.  And we can define the standard Raviart-Thomas basis functions on each interface element $T\in\mathcal{T}_h^\Gamma$ as
\begin{equation}\label{basis_standard}
\boldsymbol{\lambda}_{i,T}\in \mathcal{RT}(T),~~\quad  N_{j,T}(\boldsymbol{\lambda}_{i,T})=\delta_{ij},~~ i,j=1,2,3,
\end{equation}
where $\delta_{ij}$ is the Kronecker function.

However, the well-definedness for the interpolation operator $\Pi^{IFE}_T$ is not obvious. We need the unisolvence of IFE shape functions in $\mathcal{IRT}(T)$, which is proved in the following subsection.

\subsection{The unisolvence of IFE shape functions}
Without loss of generality, we consider an interface element $T\in\mathcal{T}_h^\Gamma$. Given a function $\boldsymbol{\phi}\in\mathcal{IRT}(T)$, we define a function $\boldsymbol{\phi}^0$ such that
\begin{equation}\label{phi0}
 \boldsymbol{\phi}^0\in \mathcal{RT}(T),~~N_{i,T}(\boldsymbol{\phi}^0)=N_{i,T}(\boldsymbol{\phi}),~ i=1,2,3.
 \end{equation}
Obviously,
\begin{equation}\label{phi0_con}
[\boldsymbol{\phi}^0\cdot\textbf{n}_h]_{\Gamma_h\cap T}=0,~[\boldsymbol{\phi}^0\cdot\textbf{t}_h]_{\Gamma_h\cap T}=0, ~\nabla\cdot \boldsymbol{\phi}^0|_{T_h^+}-\nabla\cdot \boldsymbol{\phi}^0|_{T_h^-}=0.
\end{equation}
The function $\boldsymbol{\phi}^0$ is unisolvent and can be expressed by the standard Raviart-Thomas basis functions
\begin{equation}\label{expressphi0}
\boldsymbol{\phi}^0=\sum_{i=1}^3N_{i,T}(\boldsymbol{\phi})\boldsymbol{\lambda}_{i,T}
\end{equation}
We define another function $\boldsymbol{\phi}^{J_t}$ such that
\begin{equation}\label{phijt}
\begin{aligned}
&\boldsymbol{\phi}^{J_t}|_{T_h^+}\in \mathcal{RT}(T),  ~~\boldsymbol{\phi}^{J_t}|_{T_h^-}\in \mathcal{RT}(T),~N_{i,T}(\boldsymbol{\phi}^{J_t})=0,~ i=1,2,3,\\
&[\boldsymbol{\phi}^{J_t}\cdot \textbf{n}_h]_{\Gamma_h\cap T}=0,~[\boldsymbol{\phi}^{J_t}\cdot \textbf{t}_h]_{\Gamma_h\cap T}(\boldsymbol{x}_T)=1,~\nabla \cdot \boldsymbol{\phi}^{J_t}|_{T_h^+}-\nabla \cdot \boldsymbol{\phi}^{J_t}|_{T_h^-}=0,
\end{aligned}
\end{equation}
where the point $\boldsymbol{x}_T\in \Gamma_h\cap T$ is the same as that in (\ref{IFE_con2}). The function $\boldsymbol{\phi}^{J_t}$ defined above is also unisolvent which is proved below.
Suppose there is another function satisfying (\ref{phijt}), denoted by $\boldsymbol{\phi}_1^{J_t}$. From (\ref{phijt}), it is easy to show that $\boldsymbol{\phi}^{J_t}-\boldsymbol{\phi}_1^{J_t}=0$ which implies the uniqueness.  The existence can be proved by constructing the function as follows,
\begin{equation}\label{phijt_c}
\boldsymbol{\phi}^{J_t}=\boldsymbol{\omega}-\Pi_{h,T}\boldsymbol{\omega},~~~\boldsymbol{\omega}=\left\{
\begin{aligned}
&\boldsymbol{\omega}^+=\textbf{t}_h\qquad&&\mbox{ in } T_h^+,\\
&\boldsymbol{\omega}^-=\mathbf{0}&&\mbox{ in } T_h^-.
\end{aligned}\right.
\end{equation}
It is easy to verify that the function in (\ref{phijt_c}) indeed satisfies (\ref{phijt}). 
\begin{lemma}
Given $\boldsymbol{\phi}\in \mathcal{IRT}(T)$, if we know the jump 
\begin{equation}\label{phiji_known}
\mu:=[\boldsymbol{\phi}\cdot\textbf{t}_h]_{\Gamma_h\cap T}(\boldsymbol{x}_T),
\end{equation}
 then  the function $\boldsymbol{\phi}$ can be written as 
\begin{equation}\label{decomp1}
\boldsymbol{\phi}=\boldsymbol{\phi}^0+\mu\boldsymbol{\phi}^{J_t}.
\end{equation}
\end{lemma}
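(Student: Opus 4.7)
The strategy is to set $\boldsymbol{\psi} := \boldsymbol{\phi} - \boldsymbol{\phi}^0 - \mu\boldsymbol{\phi}^{J_t}$ and prove that $\boldsymbol{\psi}\equiv 0$ by showing it satisfies a full set of homogeneous conditions on the piecewise Raviart--Thomas space on $T_h^+\cup T_h^-$, and then appealing to the very same unisolvence already invoked in the uniqueness argument for $\boldsymbol{\phi}^{J_t}$ just above the statement. This reduces everything to a routine verification.

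First I would note that, since $\boldsymbol{\phi}|_{T_h^{\pm}}\in\mathcal{RT}(T)$ by (\ref{IFE_shape}), $\boldsymbol{\phi}^0\in\mathcal{RT}(T)$ by (\ref{phi0}), and $\boldsymbol{\phi}^{J_t}|_{T_h^\pm}\in\mathcal{RT}(T)$ by (\ref{phijt}), the difference $\boldsymbol{\psi}$ is piecewise Raviart--Thomas on $T_h^+\cup T_h^-$. Using linearity of the degrees of freedom and (\ref{phi0}) together with the vanishing degrees of freedom of $\boldsymbol{\phi}^{J_t}$ in (\ref{phijt}) gives
\begin{equation*}
N_{i,T}(\boldsymbol{\psi}) = N_{i,T}(\boldsymbol{\phi}) - N_{i,T}(\boldsymbol{\phi}^0) - \mu N_{i,T}(\boldsymbol{\phi}^{J_t}) = 0, \qquad i=1,2,3.
\end{equation*}
The normal jumps across $\Gamma_h\cap T$ of $\boldsymbol{\phi}$, $\boldsymbol{\phi}^0$, and $\boldsymbol{\phi}^{J_t}$ all vanish by (\ref{IFE_con1}), (\ref{phi0_con}), and (\ref{phijt}) respectively, and the divergence jumps across $\Gamma_h\cap T$ also vanish termwise by (\ref{IFE_con3}), (\ref{phi0_con}), and (\ref{phijt}); hence $[\boldsymbol{\psi}\cdot\textbf{n}_h]_{\Gamma_h\cap T}=0$ and $\nabla\cdot\boldsymbol{\psi}|_{T_h^+}-\nabla\cdot\boldsymbol{\psi}|_{T_h^-}=0$. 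For the unweighted tangential jump at $\boldsymbol{x}_T$, the definition (\ref{phiji_known}) of $\mu$, the second identity in (\ref{phi0_con}), and the normalisation $[\boldsymbol{\phi}^{J_t}\cdot\textbf{t}_h]_{\Gamma_h\cap T}(\boldsymbol{x}_T)=1$ in (\ref{phijt}) give
\begin{equation*}
[\boldsymbol{\psi}\cdot\textbf{t}_h]_{\Gamma_h\cap T}(\boldsymbol{x}_T) = \mu - 0 - \mu\cdot 1 = 0.
\end{equation*}

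At this point $\boldsymbol{\psi}$ is a piecewise $\mathcal{RT}(T)$ function (six scalar parameters) subject to six homogeneous linear conditions: three vanishing edge averages $N_{i,T}$, and three vanishing jumps across $\Gamma_h\cap T$ (normal, tangential, and divergence). This is precisely the homogeneous system whose triviality was already used in the uniqueness half of the construction of $\boldsymbol{\phi}^{J_t}$ after (\ref{phijt_c}), so $\boldsymbol{\psi}=0$, which rearranges to the desired decomposition (\ref{decomp1}).

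The only delicate point is the last step, namely the linear independence of the six homogeneous conditions on the six-dimensional piecewise Raviart--Thomas space; this is the content of the unisolvence statement and relies on the maximum angle condition $\alpha_{max}\leq\pi/2$ assumed for interface elements. Since this fact is exactly what the paper establishes (or will establish) in Lemma~\ref{lem_basis} and has already been taken for granted in proving the uniqueness of $\boldsymbol{\phi}^{J_t}$, I would simply invoke it here rather than reproduce the geometric argument.
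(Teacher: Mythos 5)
Your decomposition and your verification of the homogeneous conditions coincide with the paper's own proof (the paper works with $\boldsymbol{w}=-\boldsymbol{\psi}$ and checks exactly the same four sets of conditions). The problem is the closing step. The triviality of the homogeneous system you arrive at does \emph{not} rely on the maximum angle condition and is \emph{not} the content of Lemma~\ref{lem_basis}; appealing to that lemma here would in fact be circular, since the proof of Lemma~\ref{lem_basis} uses the present decomposition (\ref{decomp1}) to derive the explicit formula (\ref{ire_express}), and the angle condition enters only through Lemma~\ref{lem_01} to control the denominator in that formula. The distinction you are missing is that your tangential jump condition on $\boldsymbol{\psi}$ is the \emph{unweighted} one, $[\boldsymbol{\psi}\cdot\textbf{t}_h]_{\Gamma_h\cap T}(\boldsymbol{x}_T)=0$, not the $\beta$-weighted condition (\ref{IFE_con2}); only the weighted version requires the geometric argument.

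For the unweighted system the argument is elementary, and it is what the paper actually does: the difference $\boldsymbol{\psi}|_{T_h^+}-\boldsymbol{\psi}|_{T_h^-}$ extends to a function in $\mathcal{RT}(T)$ whose divergence vanishes (by the zero divergence jump), hence is a constant vector; its normal component vanishes along the segment $\Gamma_h\cap T$ and its tangential component vanishes at $\boldsymbol{x}_T$, and since $\textbf{n}_h$ and $\textbf{t}_h$ span $\mathbb{R}^2$ the constant vector is zero. Therefore $\boldsymbol{\psi}\in\mathcal{RT}(T)$, and the three vanishing edge averages $N_{i,T}(\boldsymbol{\psi})=0$ then give $\boldsymbol{\psi}=0$ by the standard (non-immersed) Raviart--Thomas unisolvence, with no restriction on the angles of $T$. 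Replace your appeal to Lemma~\ref{lem_basis} by this short argument (it is also the step the paper leaves as ``easy to show'' in the uniqueness of $\boldsymbol{\phi}^{J_t}$), and the proof is complete and matches the paper's.
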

\begin{proof}
 Let $\boldsymbol{w}=\boldsymbol{\phi}^0+\mu\boldsymbol{\phi}^{J_t}-\boldsymbol{\phi}$. We just need to prove that $\boldsymbol{w}=0$. It follows from  (\ref{IFE_con1}), (\ref{IFE_con3}), (\ref{phi0_con}) and (\ref{phijt}) that
\begin{equation*}
[\boldsymbol{w}\cdot\textbf{n}_h]_{\Gamma_h\cap T}=0,~[\boldsymbol{w}\cdot\textbf{t}_h]_{\Gamma_h\cap T}(\boldsymbol{x}_T)=0, ~\nabla\cdot \boldsymbol{w}|_{T_h^+}-\nabla\cdot \boldsymbol{w}|_{T_h^-}=0,
\end{equation*}
which together with $\boldsymbol{w}|_{T_h^s}\in\mathcal{RT}(T)$, $s=+,-$ implies 
\begin{equation}\label{winrt}
\boldsymbol{w}\in \mathcal{RT}(T).
\end{equation}
On the other hand,  we know from (\ref{phi0}) and (\ref{phijt}) that
\begin{equation}\label{nw0}
N_{i,T}(\boldsymbol{w})=N_{i,T}(\boldsymbol{\phi}^0)+\mu N_{i,T}(\boldsymbol{\phi}^{J_t})-N_{i,T}(\boldsymbol{\phi})=0,~i=1,2,3.
\end{equation}
Combining (\ref{winrt}) and (\ref{nw0}), we conclude $\boldsymbol{w}=0$, which completes the proof.
\end{proof}

Now the problem is to find the corresponding jump $\mu$ so that the condition (\ref{IFE_con2}) is satisfied. 
Substituting (\ref{decomp1}) into (\ref{IFE_con2}), we get the following equation for the jump $\mu$,
\begin{equation}\label{eq_lambda0}
[\beta_T\boldsymbol{\phi}^{J_t}\cdot \textbf{t}_h]_{\Gamma_h\cap T}(\boldsymbol{x}_T)\mu=-[\beta_T\boldsymbol{\phi}^0\cdot \textbf{t}_h]_{\Gamma_h\cap T}(\boldsymbol{x}_T).
\end{equation}
By (\ref{phijt_c}) and (\ref{phi0_con}), we find
\begin{equation*}
\begin{aligned}
&[\beta_T\boldsymbol{\phi}^{J_t}\cdot \textbf{t}_h]_{\Gamma_h\cap T}(\boldsymbol{x}_T)=\beta_T^+-(\beta_T^+-\beta_T^-)(\Pi_{h,T}\boldsymbol{\omega})(\boldsymbol{x}_T)\cdot\textbf{t}_h,\\
&-[\beta_T\boldsymbol{\phi}^0\cdot \textbf{t}_h]_{\Gamma_h\cap T}(\boldsymbol{x}_T)=-(\beta_T^+-\beta_T^-)\boldsymbol{\phi}^0(\boldsymbol{x}_T)\cdot \textbf{t}_h.
\end{aligned}
\end{equation*}
Hence, the equation (\ref{eq_lambda0}) can be simplified as
\begin{equation}\label{eq_lambda1}
\left(1+(\beta_T^-/\beta_T^+-1)(\Pi_{h,T}\boldsymbol{\omega})(\boldsymbol{x}_T)\cdot\textbf{t}_h\right)\mu=(\beta_T^-/\beta_T^+-1)\boldsymbol{\phi}^0(\boldsymbol{x}_T)\cdot \textbf{t}_h.
\end{equation}

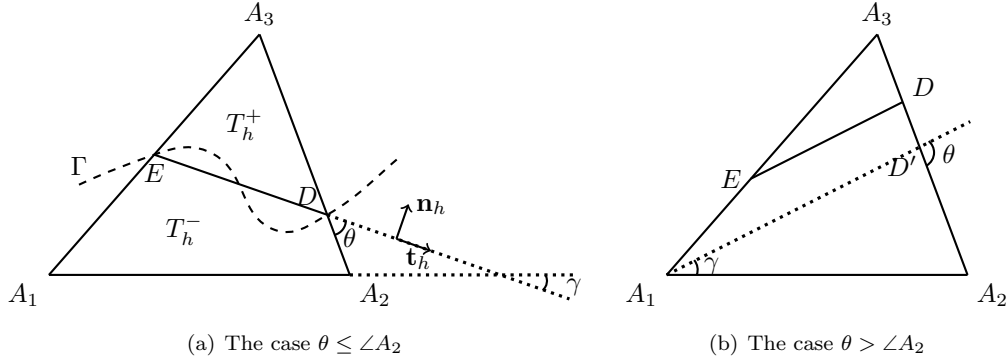
\begin{figure}[htbp]
\centering
\subfigure[The case $\theta\leq \angle A_2$]{ \label{interface_ele1} 
\begin{tikzpicture}[scale=2]
\draw [thick] (-1,-1)--(1,-1);
\draw [thick, name path=e2] (-1,-1)--(0.4,0.6);
\draw [thick, name path=e1] (0.4,0.6)--(1,-1);
\draw [thick](-1+1.4/2,-1+1.6/2)--(1-0.6/4,-1+1.6/4);
\node [below left] at (-1,-1) {$A_1$};
\node [below right ]at (1,-1) {$A_2$};
\node [above]at (0.4,0.6) {$A_3$};
\node [above left] at (1-0.6/4,-1+1.6/4) {$D$};
\node [below ] at (-1+1.4/2,-1+1.6/2) {$E$};
\draw [dashed, thick](-1+1.4/2-0.5,-1+1.6/2-0.2) to [out=25,in=180+20] (-1+1.4/2,-1+1.6/2)
to [out=20,in=150] (-1+1.4/2+0.4,-1+1.6/2)
to [out=-30,in=160] (-1+1.4/2+0.8,-1+1.6/2-0.5)
to [out=-20,in=180+30] (1-0.6/4,-1+1.6/4)
to [out=30,in=180+40] (1-0.6/4+0.5,-1+1.6/4+0.4);
\node [above] at (-1+1.4/2-0.5,-1+1.6/2-0.2) {$\Gamma$};
\node at (-1+1.4/2+0.6,-1+1.6/2+0.2) {$T_h^+$};
\node at (-1+1.4/2+0.2,-1+1.6/2-0.5) {$T_h^-$};
\draw [dotted, very thick] (0.85,-0.6)--(0.85+1.15*1.4,-0.6+-0.4*1.4);
\draw [dotted, very thick] (1,-1)--(2.5,-1);
\draw [ ->,thick] (0.85+1.15*0.4,-0.6-0.4*0.4)--(0.85+1.15*0.6,-0.6-0.4*0.6);
\draw [ ->,thick] (0.85+1.15*0.4,-0.6-0.4*0.4)--(0.85+1.15*0.4+0.4*0.2 ,-0.6-0.4*0.4 +1.15*0.2);
\node [above right] at (0.85+1.15*0.4+0.07,-0.6-0.4*0.4+0.1) {$\textbf{n}_h$};
\node [below right] at (0.85+1.15*0.4,-0.6-0.4*0.4) {$\textbf{t}_h$};
\draw [thick] (1-0.6/6,-1+1.6/6) arc [radius=0.1, start angle=-75, end angle= 0];
\node at (1-0.6/4+0.15,-1+1.6/4-0.15) {$\theta$};
\draw [thick] (2+0.3,-1)arc [radius=0.08, start angle=30, end angle= -50];
\node at (2+0.5-0.01,-1-0.08){$\gamma$};
\end{tikzpicture}
}
\subfigure[The case $\theta> \angle A_2$]{ \label{interface_ele2} 
\begin{tikzpicture}[scale=2]
\draw [thick] (-1,-1)--(1,-1);
\draw [thick, name path=e2] (-1,-1)--(0.4,0.6);
\draw [thick, name path=e1] (0.4,0.6)--(1,-1);
\draw [thick](-1+1.4/2.5,-1+1.6/2.5)--(1-0.6/1.6*1.15,-1+1.6/1.6*1.15);
\node [below left] at (-1,-1) {$A_1$};
\node [below right ]at (1,-1) {$A_2$};
\node [above]at (0.4,0.6) {$A_3$};
\node [ right] at (1-0.6/1.6*1.15,-1+1.6/1.6*1.15+0.1) {$D$};
\node [left ] at (-1+1.4/2.5,-1+1.6/2.5) {$E$};
\draw [very thick,dotted] (-1,-1)--(-1+1.345*1.5,-1+0.68*1.5);
\draw [thick] (-1+0.2,-1) arc [radius=0.1, start angle=-20, end angle= 40];
\node at (-1+0.3,-1+0.06) {$\gamma$};
\draw [thick] (1-0.6/2.2,-1+1.6/2.2) arc [radius=0.1, start angle=-60, end angle= 50];
\node at (1-0.6/2.2+0.15,-1+1.6/2.2+0.08){$\theta$};
\node [left ] at (1-0.6/2.2,-1+1.6/2.2) {$D^\prime$};
\end{tikzpicture}
}
 \caption{Typical interface elements }\label{interface_ele}
\end{figure}

\begin{lemma}\label{lem_01}
Let $\boldsymbol{\omega}$ be defined in (\ref{phijt_c}) and $\alpha_{max}$ be the maximum angle of the triangle $T\in\mathcal{T}_h^\Gamma$. If $\alpha_{max}\leq \pi/2$,  then it holds  
\begin{equation}\label{est01}
0\leq (\Pi_{h,T}\boldsymbol{\omega})(\boldsymbol{x})\cdot\textbf{t}_h\leq 1\qquad \forall \boldsymbol{x}\in T.
\end{equation}
\end{lemma}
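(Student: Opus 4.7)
The approach exploits the fact that $\boldsymbol{\omega}$ is divergence-free in $T$: $\nabla\cdot\boldsymbol{\omega}=0$ piecewise, and the normal jump across $\Gamma_h\cap T$ vanishes because $\textbf{t}_h\cdot\textbf{n}_h=0$, so $\boldsymbol{\omega}\in H(\mathrm{div};T)$ with $\nabla\cdot\boldsymbol{\omega}=0$. Since $\Pi_{h,T}$ preserves the edge fluxes and every function in $\mathcal{RT}(T)$ has constant divergence, the divergence theorem forces $\nabla\cdot\Pi_{h,T}\boldsymbol{\omega}\equiv 0$ on $T$. Writing $\Pi_{h,T}\boldsymbol{\omega}(\boldsymbol{x})=\mathbf{a}+b\boldsymbol{x}$, this implies $b=0$, so $\Pi_{h,T}\boldsymbol{\omega}$ is in fact a constant vector $\mathbf{c}$ on $T$, and the claim reduces to proving $0\leq \mathbf{c}\cdot\textbf{t}_h\leq 1$.

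To compute $\mathbf{c}$, I would work in coordinates with $\textbf{t}_h=(1,0)$, $\textbf{n}_h=(0,1)$, and $\Gamma_h\cap T$ lying on $\{x_2=0\}$. The chord $\Gamma_h\cap T$ separates one vertex from the other two, so after relabeling I assume $A_3$ is the isolated vertex and first treat the case $A_3\in T_h^+$ (so $y_3>0$ and $y_1,y_2<0$). The edge-flux moments $N_{i,T}(\boldsymbol{\omega})=\tau_i(\textbf{t}_h\cdot\textbf{n}_{i,T})$ with $\tau_i=|e_i\cap T_h^+|/|e_i|$ collapse after a short computation to $N_1|e_1|=y_3$, $N_2|e_2|=-y_3$, $N_3|e_3|=0$; substituting $\boldsymbol{\lambda}_{i,T}(\boldsymbol{x})=(|e_i|/2|T|)(\boldsymbol{x}-A_i)$ into $\mathbf{c}=\sum_i N_{i,T}(\boldsymbol{\omega})\boldsymbol{\lambda}_{i,T}$ yields the clean formula
\[
\mathbf{c}=\frac{y_3}{2|T|}(A_2-A_1), \qquad \mathbf{c}\cdot\textbf{t}_h = \frac{y_3\cos\phi_3}{h_3},
\]
where $\phi_3$ is the angle between $A_2-A_1$ and $\textbf{t}_h$ and $h_3=2|T|/|e_3|$ is the altitude from $A_3$ to the line through $e_3$.

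The main step is a short geometric bound. Let $F$ denote the foot of the perpendicular from $A_3$ to the line through $e_3$. The condition $\alpha_{max}\leq\pi/2$ makes the interior angles at $A_1$ and $A_2$ each at most $\pi/2$, which forces $F\in[A_1,A_2]$; then convexity combined with $y_1,y_2<0$ gives $y_F<0$. Decomposing $A_3-F=h_3(-\sin\phi_3,\cos\phi_3)$ yields $h_3\cos\phi_3=y_3-y_F$, so $\mathbf{c}\cdot\textbf{t}_h = y_3(y_3-y_F)/h_3^2$. The lower bound $\mathbf{c}\cdot\textbf{t}_h\geq 0$ is then immediate, and the upper bound, via $h_3^2=(y_3-y_F)^2+(x_3-x_F)^2$, rewrites as $y_F(y_F-y_3)+(x_3-x_F)^2\geq 0$, which holds because $y_F<0$ and $y_F-y_3<0$ make the first term positive. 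The complementary case $A_3\in T_h^-$ reduces to the previous one by the identity $\boldsymbol{\omega}+\chi_{T_h^-}\textbf{t}_h=\textbf{t}_h$: applying $\Pi_{h,T}$ and dotting with $\textbf{t}_h$ expresses $\mathbf{c}\cdot\textbf{t}_h$ as $1$ minus the analogous quantity for $\chi_{T_h^-}\textbf{t}_h$, which falls under the previous case after reversing the orientation of $\textbf{n}_h$. The principal obstacle is this geometric step: translating the angle hypothesis into $F\in[A_1,A_2]$ and then into the two-sided bound, since without $\alpha_{max}\leq\pi/2$ explicit obtuse examples show that $\mathbf{c}\cdot\textbf{t}_h$ can escape $[0,1]$.
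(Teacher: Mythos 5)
Your proof is correct, and although its skeleton matches the paper's --- both reduce the case where the isolated vertex lies in $T_h^-$ to the other case via $\boldsymbol{\omega}'=\boldsymbol{\omega}-\textbf{t}_h$, $\textbf{t}_h'=-\textbf{t}_h$, and both compute $(\Pi_{h,T}\boldsymbol{\omega})\cdot\textbf{t}_h$ explicitly from the edge fluxes $N_{1,T}|e_1|=\textbf{n}_h\cdot\overrightarrow{DA_3}=y_3$, $N_{2,T}|e_2|=-y_3$, $N_{3,T}=0$ (your $\phi_3$ is the paper's $\gamma$, and $2|T|=|e_3|h_3$ shows the two formulas agree) --- the way you establish the two-sided bound is genuinely different. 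The paper writes the quantity as $\frac{|DA_3|\sin\theta\cos\gamma}{|e_1|\sin\angle A_2}$ and proves the upper bound by splitting into $\theta\leq\angle A_2$ and $\theta>\angle A_2$, the second case requiring an auxiliary point $D'$ and the trigonometric estimate $\cos^2\gamma-\sin\gamma\cos\gamma\cot\angle A_1\leq 1$. You instead route everything through the foot $F$ of the altitude from $A_3$: the identity $(\Pi_{h,T}\boldsymbol{\omega})\cdot\textbf{t}_h=y_3(y_3-y_F)/h_3^2$ makes the angle hypothesis enter exactly once (to place $F$ in $[A_1,A_2]$, hence $y_F\leq 0$), after which both inequalities drop out of $h_3^2=(x_3-x_F)^2+(y_3-y_F)^2$ with no sub-cases --- arguably cleaner than the paper's argument. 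Your preliminary observation that $\Pi_{h,T}\boldsymbol{\omega}$ is a constant vector (since $\boldsymbol{\omega}$ is piecewise constant with continuous normal component across $\Gamma_h\cap T$, hence divergence-free as an element of $H(\mathrm{div};T)$, forcing the linear part of the interpolant to vanish) is also not made explicit in the paper, which only obtains an $\boldsymbol{x}$-independent expression after cancellation. The single point to pin down in a full write-up is the orientation convention behind $y_3-y_F=h_3\cos\phi_3$ (it holds with $A_1,A_2,A_3$ labelled counterclockwise); since the final expression $y_3(y_3-y_F)/h_3^2$ is orientation-independent, this is purely cosmetic.
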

\begin{proof}
For clarity, we consider a typical interface element $T=\triangle A_1A_2A_3$ with $e_1=\overline{A_2A_3}$, $e_2=\overline{A_1A_3}$ and $e_3=\overline{A_1A_2}$. Without loss of generality, we  assume that the interface $\Gamma$ cuts $e_1$ and $e_2$ at points $D$ and $E$.  There are two cases: Case 1: $T_h^+=\triangle EDA_3$ (as shown in Figure~\ref{interface_ele}); Case 2: $T_h^-=\triangle EDA_3$. In Case 1, we have 
\begin{equation}\label{case1}
\textbf{t}_h=|DE|^{-1}\overrightarrow{ED}~~\mbox{ and }~~\boldsymbol{\omega}=\left\{
\begin{aligned}
&|DE|^{-1}\overrightarrow{ED}\qquad&&\mbox{ in } \triangle EDA_3,\\
&\mathbf{0}&&\mbox{ in } T\backslash\triangle EDA_3.
\end{aligned}\right.
\end{equation}
In order to distinguish these two cases, we replace the notations $\textbf{t}_h$ and $\boldsymbol{\omega}_h$ by $\textbf{t}^\prime_h$ and $\boldsymbol{\omega}^\prime$ in Case 2. According to (\ref{disth}), we have the following relations according to (\ref{phijt_c}) 
\begin{equation}\label{case2}
\textbf{t}^\prime_h=|DE|^{-1}\overrightarrow{DE}~~\mbox{ and }~~\boldsymbol{\omega}^\prime=\left\{
\begin{aligned}
&\mathbf{0}\qquad&&\mbox{ in } \triangle EDA_3,\\
&|DE|^{-1}\overrightarrow{DE}\qquad&&\mbox{ in } T\backslash\triangle EDA_3.
\end{aligned}\right.
\end{equation}
Comparing  (\ref{case1}) with (\ref{case2}), we find
\begin{equation*}
\textbf{t}^\prime_h=-\textbf{t}_h,~~\boldsymbol{\omega}^\prime=\boldsymbol{\omega}-\textbf{t}_h,
\end{equation*}
which implies 
\begin{equation}\label{case12c}
(\Pi_{h,T}\boldsymbol{\omega}^\prime)\cdot\textbf{t}^\prime_h=(\Pi_{h,T}\boldsymbol{\omega}-\textbf{t}_h)\cdot(-\textbf{t}_h)=1-(\Pi_{h,T}\boldsymbol{\omega})\cdot\textbf{t}_h.
\end{equation}
If the estimate~(\ref{est01}) holds for Case 1, then we can conclude from (\ref{case12c})  that the estimate~(\ref{est01}) also holds for Case 2. Therefore, we only need to consider Case 1 whose geometric configuration is given in Figure~\ref{interface_ele}.

On the concrete element $T=\triangle A_1A_2A_3$, the standard Raviart-Thomas basis functions defined in (\ref{basis_standard}) can be written as 
\begin{equation*}
\boldsymbol{\lambda}_{i,T}(\boldsymbol{x})=\frac{|e_i|}{2|T|}\overrightarrow{A_i\boldsymbol{x}},\qquad i=1,2,3.
\end{equation*}
It follows from  (\ref{dof}) and (\ref{phijt_c}) that 
\begin{equation*}
\begin{aligned}
(\Pi_{h,T}&\boldsymbol{\omega})(\boldsymbol{x})\cdot\textbf{t}_h=N_{1,T}(\boldsymbol{\omega})\boldsymbol{\lambda}_{1,T}(\boldsymbol{x})\cdot\textbf{t}_h+N_{2,T}(\boldsymbol{\omega})\boldsymbol{\lambda}_{2,T}(\boldsymbol{x})\cdot\textbf{t}_h\\
&=\frac{|A_3D|}{|e_1|}\textbf{t}_h\cdot\textbf{n}_{1,T}\frac{|e_1|}{2|T|}\overrightarrow{A_1\boldsymbol{x}}\cdot\textbf{t}_h + \frac{|A_3E|}{|e_2|}\textbf{t}_h\cdot\textbf{n}_{2,T}\frac{|e_2|}{2|T|}\overrightarrow{A_2\boldsymbol{x}}\cdot\textbf{t}_h\\
&=\frac{|A_3D|}{|e_1|}(R_{\frac{\pi}{2}}\textbf{t}_h)\cdot (R_{\frac{\pi}{2}}\textbf{n}_{1,T})\frac{|e_1|}{2|T|}\overrightarrow{A_1\boldsymbol{x}}\cdot\textbf{t}_h + \frac{|A_3E|}{|e_2|}(R_{\frac{\pi}{2}}\textbf{t}_h)\cdot (R_{\frac{\pi}{2}}\textbf{n}_{2,T})\frac{|e_2|}{2|T|}\overrightarrow{A_2\boldsymbol{x}}\cdot\textbf{t}_h\\
&=(2|T|)^{-1}\left((\textbf{n}_h\cdot \overrightarrow{DA_3})(\overrightarrow{A_1\boldsymbol{x}}\cdot\textbf{t}_h)  + (\textbf{n}_h\cdot \overrightarrow{A_3E})(\overrightarrow{A_2\boldsymbol{x}}\cdot\textbf{t}_h)\right),
\end{aligned}
\end{equation*}
where $R_{\frac{\pi}{2}}$ is a rotation matrix defined in (\ref{rotation}).
Using the relation $\textbf{n}_h\cdot \overrightarrow{DA_3}+\textbf{n}_h\cdot \overrightarrow{A_3E}=\textbf{n}_h\cdot \overrightarrow{DE}=0$, we further have 
\begin{equation}\label{pro_01_1}
\begin{aligned}
(\Pi_{h,T}\boldsymbol{\omega})(\boldsymbol{x})\cdot\textbf{t}_h&=(2|T|)^{-1}\left((\textbf{n}_h\cdot \overrightarrow{DA_3})(\overrightarrow{A_1\boldsymbol{x}}\cdot\textbf{t}_h)  - (\textbf{n}_h\cdot \overrightarrow{DA_3})(\overrightarrow{A_2\boldsymbol{x}}\cdot\textbf{t}_h)\right)\\
&=(2|T|)^{-1}(\textbf{n}_h\cdot \overrightarrow{DA_3})(\overrightarrow{A_1A_2}\cdot\textbf{t}_h)\\
&=\frac{|DA_3|}{|e_1|\sin\angle A_2}\left(\textbf{n}_h\cdot \frac{\overrightarrow{DA_3}}{|DA_3|}\right)\left(\frac{\overrightarrow{A_1A_2}}{|e_3|}\cdot\textbf{t}_h\right)\\
&=\frac{|DA_3|\sin\theta\cos\gamma}{|e_1|\sin\angle A_2},
\end{aligned}
\end{equation}
where $\theta$ is the angle from   $\overrightarrow{A_3A_2}$ to $\textbf{t}_h$, and $\gamma$ is the angle from $\overrightarrow{A_1A_2}$ to $\textbf{t}_h$ (see Figure~\ref{interface_ele}). 
It is easy to see that 
\begin{equation}\label{gamma_fw}
0<\theta< \angle A_1+ \angle A_2,\qquad -\angle A_2 < \gamma <\angle A_1.
\end{equation}
Since $\alpha_{max}\leq \pi/2$, using (\ref{pro_01_1}) we get the first inequality in (\ref{est01})  
\begin{equation*}
(\Pi_{h,T}\boldsymbol{\omega})(\boldsymbol{x})\cdot\textbf{t}_h\geq0.
\end{equation*}
Finally, we prove the second inequality in (\ref{est01}).
If $0<\theta\leq \angle A_2$ (see Figure~\ref{interface_ele1}), we know from (\ref{pro_01_1}) that 
\begin{equation*}
(\Pi_{h,T}\boldsymbol{\omega})(\boldsymbol{x})\cdot\textbf{t}_h\leq\frac{|DA_3|\cos\gamma}{|e_1|}\leq 1.
\end{equation*}
If $\theta> \angle A_2$, then $\gamma>0$ (see Figure~\ref{interface_ele2}).  We need a refined estimate. Let $D^\prime$ be a point on $e_1$ such that the line $D^\prime A_1$ is parallel to the line $DE$. Then we have 
\begin{equation*}
\frac{|DA_3|}{|e_1|} \leq \frac{|D^\prime A_3|}{|e_1|}=1-\frac{|D^\prime A_2|}{|e_1|}=1-\frac{|e_3|\sin\gamma}{|e_1|\sin\theta}=1-\frac{\sin\angle A_3\sin\gamma}{\sin\angle A_1\sin\theta},
\end{equation*}
which together with (\ref{pro_01_1}) and the facts $\theta=\gamma+\angle A_2$ and $\angle A_1+\angle A_2+\angle A_3=\pi$ yields 
\begin{equation*}
(\Pi_{h,T}\boldsymbol{\omega})(\boldsymbol{x})\cdot\textbf{t}_h\leq \left(1-\frac{\sin (\angle A_1+\angle A_2)\sin\gamma}{\sin\angle A_1\sin(\gamma+\angle A_2)}\right)\frac{\sin(\gamma+\angle A_2)\cos\gamma}{\sin\angle A_2}.
\end{equation*}
By a direct calculation, we obtain 
\begin{equation*}
(\Pi_{h,T}\boldsymbol{\omega})(\boldsymbol{x})\cdot\textbf{t}_h\leq \cos^2\gamma-\frac{\sin\gamma\cos\gamma\cos\angle A_1}{\sin\angle A_1}\leq 1,
\end{equation*}
where the facts  $0<\gamma<\pi/2$ and $ 0<\angle A_1\leq \pi/2$ are used in the last inequality. This completes the proof of the lemma.
\end{proof}

\begin{lemma}\label{lem_basis}
Let $T\in\mathcal{T}_h^\Gamma$ be an interface triangle satisfying the maximum angle condition $\alpha_{max}\leq \pi/2$.  Then the function $\boldsymbol{\phi}\in \mathcal{IRT}(T)$ is uniquely determined by $N_{i,T}(\boldsymbol{\phi})$, $i=1,2,3$. Furthermore, we have the following explicit formula 
 \begin{equation}\label{ire_express}
 \boldsymbol{\phi}=\sum_{i=1}^3N_{i,T}(\boldsymbol{\phi})\boldsymbol{\lambda}_{i,T}+\frac{(\beta_T^-/\beta_T^+-1)\sum_{i=1}^3N_{i,T}(\boldsymbol{\phi})\boldsymbol{\lambda}_{i,T}(\boldsymbol{x}_T)\cdot \textbf{t}_h}{1+(\beta_T^-/\beta_T^+-1)(\Pi_{h,T}\boldsymbol{\omega})(\boldsymbol{x}_T)\cdot\textbf{t}_h}(\boldsymbol{\omega}-\Pi_{h,T}\boldsymbol{\omega}),
 \end{equation}
where $\boldsymbol{\lambda}_{i,T}$ and $\boldsymbol{\omega}$ are defined in (\ref{basis_standard}) and  (\ref{phijt_c}), respectively.
\end{lemma}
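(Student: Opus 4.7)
The plan is to reduce everything to the scalar equation (\ref{eq_lambda1}): the earlier decomposition lemma already tells us that any $\boldsymbol{\phi}\in \mathcal{IRT}(T)$ must be of the form $\boldsymbol{\phi}=\boldsymbol{\phi}^0+\mu\boldsymbol{\phi}^{J_t}$, where $\boldsymbol{\phi}^0$ is uniquely determined by the degrees of freedom via (\ref{expressphi0}), $\boldsymbol{\phi}^{J_t}$ is the fixed function defined in (\ref{phijt_c}), and $\mu$ satisfies (\ref{eq_lambda1}). So unisolvence amounts to showing that (\ref{eq_lambda1}) has a unique solution $\mu$ for every choice of $N_{i,T}(\boldsymbol{\phi})$, and then the explicit formula (\ref{ire_express}) is obtained by substitution.

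First I would invoke Lemma~\ref{lem_01}, which under the maximum angle condition $\alpha_{\max}\le \pi/2$ gives
\begin{equation*}
0\le s:=(\Pi_{h,T}\boldsymbol{\omega})(\boldsymbol{x}_T)\cdot\textbf{t}_h\le 1.
\end{equation*}
Setting $r:=\beta_T^-/\beta_T^+>0$, the scalar coefficient on the left of (\ref{eq_lambda1}) becomes
\begin{equation*}
1+(r-1)s=(1-s)\cdot 1+s\cdot r,
\end{equation*}
which is a convex combination of the two positive numbers $1$ and $r$, hence bounded below by $\min(1,r)>0$. In particular it is nonzero, so (\ref{eq_lambda1}) has the unique solution
\begin{equation*}
\mu=\frac{(\beta_T^-/\beta_T^+-1)\,\boldsymbol{\phi}^0(\boldsymbol{x}_T)\cdot\textbf{t}_h}{1+(\beta_T^-/\beta_T^+-1)(\Pi_{h,T}\boldsymbol{\omega})(\boldsymbol{x}_T)\cdot\textbf{t}_h}.
\end{equation*}

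Next I would substitute back. Using (\ref{expressphi0}) to write $\boldsymbol{\phi}^0(\boldsymbol{x}_T)\cdot\textbf{t}_h=\sum_{i=1}^3 N_{i,T}(\boldsymbol{\phi})\,\boldsymbol{\lambda}_{i,T}(\boldsymbol{x}_T)\cdot\textbf{t}_h$, and using the construction (\ref{phijt_c}) that $\boldsymbol{\phi}^{J_t}=\boldsymbol{\omega}-\Pi_{h,T}\boldsymbol{\omega}$, the decomposition $\boldsymbol{\phi}=\boldsymbol{\phi}^0+\mu\boldsymbol{\phi}^{J_t}$ becomes exactly (\ref{ire_express}). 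Since $\boldsymbol{\phi}^0$ and $\mu$ are each uniquely determined by $(N_{1,T}(\boldsymbol{\phi}),N_{2,T}(\boldsymbol{\phi}),N_{3,T}(\boldsymbol{\phi}))$, so is $\boldsymbol{\phi}$, which gives the unisolvence.

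The main obstacle is precisely the non-vanishing of the denominator in the formula for $\mu$, and this is where the maximum angle hypothesis is essential: without the two-sided bound $0\le s\le 1$ provided by Lemma~\ref{lem_01}, the quantity $1+(r-1)s$ could vanish for some adversarial contrast ratio $r$ and geometric configuration. Once that bound is in hand, the rest is a one-line substitution. It is worth noting that the same argument produces a lower bound $\min(1,r)>0$ that is independent of the interface location relative to the mesh, which will be convenient later when stability and approximation estimates for the IFE space need a uniform control of $\mu$.
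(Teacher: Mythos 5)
Your proposal is correct and follows essentially the same route as the paper: both reduce unisolvence to the non-vanishing of the denominator $1+(\beta_T^-/\beta_T^+-1)(\Pi_{h,T}\boldsymbol{\omega})(\boldsymbol{x}_T)\cdot\textbf{t}_h$ via the bound $0\le(\Pi_{h,T}\boldsymbol{\omega})(\boldsymbol{x}_T)\cdot\textbf{t}_h\le1$ from Lemma~\ref{lem_01}, solve (\ref{eq_lambda1}) for $\mu$, and substitute into the decomposition (\ref{decomp1}). Your convex-combination phrasing of the lower bound $\min(1,\beta_T^-/\beta_T^+)$ is just a cleaner restatement of the paper's case split in (\ref{unique_fenmu}).
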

\begin{proof}
 From Lemma~(\ref{lem_01}), we have
 \begin{equation}\label{unique_fenmu}
 1+(\beta_T^-/\beta_T^+-1)(\Pi_{h,T}\boldsymbol{\omega})(\boldsymbol{x}_T)\cdot\textbf{t}_h\geq 
 \left\{\begin{aligned} 
 &1 \qquad &&\mbox{ if } \beta_T^-/\beta_T^+\geq 1,\\
 &\beta_T^-/\beta_T^+&&\mbox{ if } 0<\beta_T^-/\beta_T^+<1,
 \end{aligned}
 \right.
 \end{equation}
 which implies the equation  (\ref{eq_lambda1}) has a unique solution 
 \begin{equation*}
 \mu=\frac{(\beta_T^-/\beta_T^+-1)\boldsymbol{\phi}^0(\boldsymbol{x}_T)\cdot \textbf{t}_h}{1+(\beta_T^-/\beta_T^+-1)(\Pi_{h,T}\boldsymbol{\omega})(\boldsymbol{x}_T)\cdot\textbf{t}_h}.
 \end{equation*}
 Substituting the above identity  into (\ref{decomp1}) yields 
 \begin{equation*}
 \boldsymbol{\phi}=\boldsymbol{\phi}^0+\frac{(\beta_T^-/\beta_T^+-1)(\boldsymbol{\phi}^0(\boldsymbol{x}_T)\cdot \textbf{t}_h)\boldsymbol{\phi}^{J_t}}{1+(\beta_T^-/\beta_T^+-1)(\Pi_{h,T}\boldsymbol{\omega})(\boldsymbol{x}_T)\cdot\textbf{t}_h}.
 \end{equation*}
The result (\ref{ire_express})  is then obtained by using (\ref{expressphi0}) and (\ref{phijt_c}).
If $N_{i,T}(\boldsymbol{\phi})=0$, $i=1,2,3$, it is easy to see from (\ref{ire_express})  that $\boldsymbol{\phi}=0$. Hence, the function $\boldsymbol{\phi}\in \mathcal{IRT}(T)$ is uniquely determined by $N_{i,T}(\boldsymbol{\phi})$, $i=1,2,3$, which completes the proof.
\end{proof}

\begin{remark}
If $\beta_T^+=\beta_T^-$,  we can see from (\ref{ire_express}) that the IFE shape function space $\mathcal{IRT}(T)$ is the same as the traditional Raviart-Thomas shape function space $\mathcal{RT}(T)$. Therefore, when the interface disappears, i.e., $[\widetilde{\beta}]_{\Gamma}(\boldsymbol{x})=0$ for all $x\in\Gamma$, the IFE method (\ref{mix_IFE}) becomes the traditional Raviart-Thomas mixed finite element method.
\end{remark}

\subsection{Estimates of IFE basis functions}

On each interface element $T\in\mathcal{T}_h^\Gamma$, define IFE  basis functions by
\begin{equation}\label{IFE_basis}
\boldsymbol{\phi}_{i,T}\in \mathcal{IRT}(T),\quad  N_{j,T}(\boldsymbol{\phi}_{i,T})=\delta_{ij},~~ i,j=1,2,3, 
\end{equation}
which can be computed by  (\ref{ire_express}) easily in practical implementation.
 
It is well-known that the traditional basis functions defined in (\ref{basis_standard}) satisfy 
\begin{equation}\label{basis_stad_est}
\|\boldsymbol{\lambda}_{i,T}\|_{L^\infty(T)}\leq C,\quad i=1,2,3\qquad\forall T\in\mathcal{T}_h,
\end{equation}
where the constant  depends only on the shape regularity parameter $\varrho$.
The following lemma shows that the IFE basis functions also have similar estimates.
\begin{lemma}\label{lem_est_IFE}
There exists a constant $C$,   depending only on the shape regularity parameter $\varrho$, such that
\begin{equation}\label{est_IFE1}
\|\boldsymbol{\phi}_{i,T}\|_{L^\infty(T)}\leq C\max\{\beta_T^+/\beta_T^-,\beta_T^-/\beta_T^+\}, \quad i=1,2,3\qquad \forall T\in\mathcal{T}_h^\Gamma.
\end{equation}
Furthermore, if define $\boldsymbol{\phi}^s_{i,T}\in\mathcal{RT}(T)$ such that  $\boldsymbol{\phi}^s_{i,T}=\boldsymbol{\phi}_{i,T}|_{T_h^s}$, then it holds 
\begin{equation}\label{est_IFE2}
\|\boldsymbol{\phi}^s_{i,T}\|_{L^\infty(T)}\leq C\max\{\beta_T^+/\beta_T^-,\beta_T^-/\beta_T^+\},\quad s=+,-,  \quad i=1,2,3\qquad \forall T\in\mathcal{T}_h^\Gamma.
\end{equation}
\end{lemma}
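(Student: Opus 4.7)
The plan is to start from the explicit formula (\ref{ire_express}) in Lemma~\ref{lem_basis}. Setting $N_{j,T}(\boldsymbol{\phi}_{i,T})=\delta_{ij}$ collapses (\ref{ire_express}) to
\begin{equation*}
\boldsymbol{\phi}_{i,T}=\boldsymbol{\lambda}_{i,T}+c_i\,(\boldsymbol{\omega}-\Pi_{h,T}\boldsymbol{\omega}),\qquad c_i:=\frac{(\beta_T^-/\beta_T^+-1)\,\boldsymbol{\lambda}_{i,T}(\boldsymbol{x}_T)\cdot\textbf{t}_h}{1+(\beta_T^-/\beta_T^+-1)(\Pi_{h,T}\boldsymbol{\omega})(\boldsymbol{x}_T)\cdot\textbf{t}_h},
\end{equation*}
and the same expression, with $\boldsymbol{\omega}-\Pi_{h,T}\boldsymbol{\omega}$ replaced by its two $\mathcal{RT}(T)$ representatives $\textbf{t}_h-\Pi_{h,T}\boldsymbol{\omega}$ and $-\Pi_{h,T}\boldsymbol{\omega}$, yields the polynomial extensions $\boldsymbol{\phi}^+_{i,T}$ and $\boldsymbol{\phi}^-_{i,T}$ on the full triangle $T$. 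My strategy is to bound every ingredient except $c_i$ by a constant depending only on $\varrho$, then to establish $|c_i|\leq C\max\{\beta_T^+/\beta_T^-,\beta_T^-/\beta_T^+\}$, and finally to read off (\ref{est_IFE1}) and (\ref{est_IFE2}) from the three explicit formulas.

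The shape-regularity bounds are routine. The standard Raviart-Thomas estimate (\ref{basis_stad_est}) gives $\|\boldsymbol{\lambda}_{i,T}\|_{L^\infty(T)}\leq C$. Because $|\boldsymbol{\omega}|\leq 1$ pointwise, the degrees of freedom satisfy $|N_{i,T}(\boldsymbol{\omega})|\leq 1$, so the expansion $\Pi_{h,T}\boldsymbol{\omega}=\sum_{i=1}^{3} N_{i,T}(\boldsymbol{\omega})\boldsymbol{\lambda}_{i,T}$ combined with (\ref{basis_stad_est}) yields $\|\Pi_{h,T}\boldsymbol{\omega}\|_{L^\infty(T)}\leq C$. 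Consequently, each of $\boldsymbol{\omega}-\Pi_{h,T}\boldsymbol{\omega}$, $\textbf{t}_h-\Pi_{h,T}\boldsymbol{\omega}$ and $-\Pi_{h,T}\boldsymbol{\omega}$ is $L^\infty(T)$-bounded by a shape-regularity constant, and the same holds for $\boldsymbol{\lambda}_{i,T}(\boldsymbol{x}_T)\cdot\textbf{t}_h$ appearing in the numerator of $c_i$.

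The only nontrivial step, where the contrast in $\widetilde{\beta}$ enters, is controlling $|c_i|$. Here I would invoke the denominator lower bound (\ref{unique_fenmu}), which was derived from Lemma~\ref{lem_01}, and split into two cases. If $\beta_T^-/\beta_T^+\geq 1$, the denominator is at least $1$ while $|\beta_T^-/\beta_T^+-1|\leq \beta_T^-/\beta_T^+$, giving $|c_i|\leq C\beta_T^-/\beta_T^+$; if $0<\beta_T^-/\beta_T^+<1$, the denominator is at least $\beta_T^-/\beta_T^+$ while $|\beta_T^-/\beta_T^+-1|<1$, giving $|c_i|\leq C\beta_T^+/\beta_T^-$. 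Either way $|c_i|\leq C\max\{\beta_T^+/\beta_T^-,\beta_T^-/\beta_T^+\}$. Substituting this bound, together with the shape-regularity estimates from the previous paragraph, into the explicit formulas for $\boldsymbol{\phi}_{i,T}$ and for the two polynomial pieces $\boldsymbol{\phi}^\pm_{i,T}$ delivers (\ref{est_IFE1}) and (\ref{est_IFE2}) simultaneously, since all three formulas have the same structural shape and differ only in which $\mathcal{RT}(T)$ representative of $\boldsymbol{\omega}-\Pi_{h,T}\boldsymbol{\omega}$ is used.
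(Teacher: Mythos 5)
Your proposal is correct and follows essentially the same route as the paper: the explicit formula (\ref{ire_express}) specialized to $N_{j,T}(\boldsymbol{\phi}_{i,T})=\delta_{ij}$, shape-regularity bounds on $\boldsymbol{\lambda}_{i,T}$ and $\Pi_{h,T}\boldsymbol{\omega}$, and the denominator lower bound (\ref{unique_fenmu}) with the same two-case split on $\beta_T^-/\beta_T^+$. The only cosmetic difference is that the paper proves (\ref{est_IFE2}) first and notes (\ref{est_IFE1}) is an immediate consequence, whereas you treat the three formulas in parallel.
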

\begin{proof}
We just need to prove the estimate (\ref{est_IFE2}) because (\ref{est_IFE1}) is  a direct consequence of (\ref{est_IFE2}).
From (\ref{IFE_basis}) and (\ref{ire_express}),  the IFE basis functions can be expressed explicitly
\begin{equation*}
\boldsymbol{\phi}_{i,T}^s=\boldsymbol{\lambda}_{i,T}+\frac{(\beta_T^-/\beta_T^+-1)\boldsymbol{\lambda}_{i,T}(\boldsymbol{x}_T)\cdot \textbf{t}_h}{1+(\beta_T^-/\beta_T^+-1)(\Pi_{h,T}\boldsymbol{\omega})(\boldsymbol{x}_T)\cdot\textbf{t}_h}(\boldsymbol{\omega}^s-\Pi_{h,T}\boldsymbol{\omega}),~~~s=+,-,
\end{equation*}
which together with (\ref{phijt_c}) and (\ref{basis_stad_est}) implies 
\begin{equation*}
\|\boldsymbol{\phi}_{i,T}^s\|_{L^\infty(T)}\leq C+\frac{C\left|\beta_T^-/\beta_T^+-1\right|}{\left|1+(\beta_T^-/\beta_T^+-1)(\Pi_{h,T}\boldsymbol{\omega})(\boldsymbol{x}_T)\cdot\textbf{t}_h\right|},~~~s=+,-.
\end{equation*}
Applying the estimate (\ref{unique_fenmu}) to the above inequality, we can derive
\begin{equation*}
\begin{aligned}
&\|\boldsymbol{\phi}_{i,T}^s\|_{L^\infty(T)}\leq C\left(1+\beta_T^-/\beta_T^+-1\right)\leq C\beta_T^-/\beta_T^+\quad &&\mbox{ if } \beta_T^-/\beta_T^+\geq 1,\\
&\|\boldsymbol{\phi}_{i,T}^s\|_{L^\infty(T)}\leq C\left(1+\frac{1-\beta_T^-/\beta_T^+}{\beta_T^-/\beta_T^+}\right)\leq C\beta_T^+/\beta_T^-\quad &&\mbox{ if } 0<\beta_T^-/\beta_T^+< 1,
\end{aligned}
\end{equation*}
which completes the proof.
\end{proof}
\subsection{The commuting diagram}
Define a local $L^2$ projection $P^0_{T}: L^2(T)\rightarrow \mathbb{R}^1$ such that
\begin{equation}\label{def_L2_pro}
P^0_{T} v=\frac{1}{|T|}\int_{T}vd\boldsymbol{x}
\end{equation}
and a global $L^2$ projection  $P^0_h: L^2(\Omega) \rightarrow M_h$ such that 
$(P^0_h v)|_T=P^0_{T} v$ for all  $T\in\mathcal{T}_h$.
It is well-known that the following commuting property holds for the standard  Raviart-Thomas interpolation operator $\Pi_h$ defined in (\ref{inter_oper_def})
\begin{equation*}
\nabla \cdot \Pi_h \boldsymbol{q}=P_h^0\nabla\cdot\boldsymbol{q}\qquad \forall \boldsymbol{q}\in W(\Omega).
\end{equation*}
Next we show that the commuting property also holds for the IFE interpolation operator $\Pi^{IFE}_h$.
It follows from (\ref{def_L2_pro}), (\ref{dof})  and (\ref{local_IFE_inter}) that
\begin{equation}\label{pro_commu}
\begin{aligned}
\int_{T}\nabla \cdot \Pi^{IFE}_h \boldsymbol{q}-P_h^0\nabla\cdot\boldsymbol{q}d\boldsymbol{x}&=\int_{T}\nabla \cdot \Pi^{IFE}_h \boldsymbol{q}-\nabla\cdot\boldsymbol{q}d\boldsymbol{x}=\sum_{i=1}^3\int_{e_i} (\Pi^{IFE}_h \boldsymbol{q}-\boldsymbol{q})\cdot \textbf{n}_{i,T}ds\\
&=\sum_{i=1}^3|e_i|N_{i,T}(\Pi^{IFE}_h \boldsymbol{q}-\boldsymbol{q})=0\qquad \forall T\in\mathcal{T}_h^\Gamma.
\end{aligned}
\end{equation}
Moreover, we know from (\ref{IFE_con3}) that 
$\nabla \cdot (\Pi^{IFE}_h \boldsymbol{q})|_{T_h^+}=\nabla \cdot (\Pi^{IFE}_h \boldsymbol{q})|_{T_h^-}$.
In other words, $(\nabla \cdot \Pi^{IFE}_h\boldsymbol{q})|_{T}$ is a constant. Thus,  (\ref{pro_commu}) implies  
\begin{equation*}
\nabla_h \cdot \Pi^{IFE}_h \boldsymbol{q}-P_h^0\nabla\cdot\boldsymbol{q}=0\qquad \forall \boldsymbol{q}\in W(\Omega).
\end{equation*}
We summarize these commuting results in the following  diagram:
\begin{equation}\label{commuting}
\begin{array}{lll}
 W(\Omega)&\xrightarrow{~~~\nabla \cdot~~~} & L^2(\Omega)   \\
 ~~~\left\downarrow \Pi_h\begin{aligned}
 ~\\
 ~
 \end{aligned}\right. &   &   ~~\left\downarrow P^0_h\begin{aligned}
 ~\\
 ~
 \end{aligned}\right.\\
 \mathcal{RT}(\mathcal{T}_h) &\xrightarrow{~~~\nabla \cdot~~~} &~~M_h
\end{array}
\qquad
\begin{array}{lll}
 W(\Omega)&\xrightarrow{~~~\nabla \cdot~~~} & L^2(\Omega)   \\
 ~~~\left\downarrow \Pi^{IFE}_h\begin{aligned}
 ~\\
 ~
 \end{aligned}\right. &   &   ~~\left\downarrow P^0_h\begin{aligned}
 ~\\
 ~
 \end{aligned}\right.\\
 \mathcal{IRT}(\mathcal{T}_h) &\xrightarrow{~~~\nabla_h \cdot~~~} &~~M_h.
\end{array}
\end{equation}

\subsection{Approximation capabilities of the IFE space}
Denote $\mbox{dist}(x,\Gamma)$ as  the distance between a point $x$ and the interface $\Gamma$, and  $U(\Gamma,\delta)=\{x\in\mathbb{R}^2: \mbox{dist}(x,\Gamma)< \delta\}$  as the neighborhood of $\Gamma$ of thickness $\delta$. 
Define a signed distance function near the interface as
\begin{equation*}
\rho(\boldsymbol{x})=\left\{
\begin{aligned}
&\mbox{dist}(x,\Gamma)\quad&& \mbox{ if } x\in\Omega^+\cap U(\Gamma,\delta_0),\\
&-\mbox{dist}(x,\Gamma)\quad&& \mbox{ if } x\in\Omega^-\cap U(\Gamma,\delta_0).
\end{aligned}\right.
\end{equation*}
We also define the meshsize of $\mathcal{T}_h^\Gamma$ by
\begin{equation}\label{hgamma}
h_\Gamma:=\max_{T\in\mathcal{T}_h^\Gamma}h_T.
\end{equation}
It is obvious that $h_\Gamma\leq h$ and $\bigcup_{T\in\mathcal{T}_h^\Gamma} T\subset U(\Gamma,h_\Gamma)$.
\begin{assumption}\label{assumption_delta}
There exists a constant $\delta_0>0$ such that the signed distance function $\rho(\boldsymbol{x})$ is well-defined in $U(\Gamma,\delta_0)$ with $\rho(\boldsymbol{x})\in C^2(U(\Gamma,\delta_0))$. We also assume $h_\Gamma<\delta_0$ 
so that  $T\subset U(\Gamma,\delta_0)$ for all interface elements $T\in\mathcal{T}_h^\Gamma$. 
\end{assumption}
The assumption is reasonable since the interface $\Gamma$ is $C^2$-smooth. 
Now the unit normal and tangent vectors of the interface can be evaluated as 
\begin{equation}\label{n_and_t}
   \textbf{n}(\boldsymbol{x})=\nabla \rho,~~~~ \textbf{t}(\boldsymbol{x}) =\left(-\frac{\partial\rho}{\partial x_2},\frac{\partial\rho}{\partial x_1}\right)^T.
\end{equation}
We note that these functions $\textbf{n}(\boldsymbol{x})$ and $\textbf{t}(\boldsymbol{x})$ are well-defined in the region $U(\Gamma,\delta_0)$.
We also view the functions $\textbf{n}_h(\boldsymbol{x})$ and $\textbf{t}_h(\boldsymbol{x})$ as piecewise constant vectors defined on interface elements.
On each interface element $T\in\mathcal{T}_h^\Gamma$, since $\Gamma$ is in  $C^2$, by  Rolle's Theorem, there exists at least one point $\boldsymbol{x}^*\in\Gamma\cap T$, see Figure~\ref{interface_ele},  such that
\begin{equation}\label{rolle}
\textbf{n}(\boldsymbol{x}^*)=\textbf{n}_h(\boldsymbol{x}^*) \quad \mbox{ and }\quad \textbf{t}(\boldsymbol{x}^*)=\textbf{t}_h(\boldsymbol{x}^*).
\end{equation}
Since $\rho(\boldsymbol{x})\in C^2(U(\Gamma,\delta_0))$, we have
\begin{equation}\label{nt_smooth}
\textbf{n}(\boldsymbol{x})\in \left(C^1(\overline{T})\right)^2\quad \mbox{ and }\quad\textbf{t}(\boldsymbol{x})\in \left(C^1(\overline{T})\right)^2\quad \forall T\in\mathcal{T}_h^\Gamma.
\end{equation}
Using  Taylor's expansion  at $x^*$, we further have
\begin{equation}\label{error_nt}
\|\textbf{n}-\textbf{n}_h\|_{L^\infty(T)}\leq Ch\quad \mbox{ and }\quad\|\textbf{t}-\textbf{t}_h\|_{L^\infty(T)}\leq Ch\quad \forall T\in\mathcal{T}_h^\Gamma.
\end{equation}
The following lemma presents a $\delta$-strip argument that will be used for the error estimate in the region near the interface (see Lemma 2.1 in \cite{Li2010Optimal}).
\begin{lemma}\label{lem_strip}
Let $\delta$ be sufficiently small. The constant $\delta_0$ is fixed and satisfies Assumption~\ref{assumption_delta}.  Then it holds  for any $v\in H^1(\Omega)$ that 
\begin{equation}\label{strip1}
\|v\|_{L^2(U(\Gamma,\delta))}\leq C\sqrt{\delta}\,  \| v\|_{H^1(U(\Gamma,\delta_0))}.
\end{equation}
Furthermore,  if $v|_{\Gamma}=0$, then there holds
\begin{equation}\label{strip2}
\|v\|_{L^2(U(\Gamma,\delta))}\leq C\delta \, \|\nabla v\|_{L^2(U(\Gamma,\delta))}.
\end{equation}
\end{lemma}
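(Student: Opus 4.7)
The plan is to reduce both inequalities to one-dimensional integrations along the normal direction by introducing tubular (Fermi) coordinates around $\Gamma$. Under Assumption~\ref{assumption_delta}, the signed distance function $\rho$ is $C^2$ on $U(\Gamma,\delta_0)$, so if $X:[0,L]\to\Gamma$ is an arclength parametrization of $\Gamma$, then the map $\Phi(s,r):=X(s)+r\,\textbf{n}(X(s))$ is a $C^1$-diffeomorphism of $[0,L]\times(-\delta_0,\delta_0)$ onto $U(\Gamma,\delta_0)$ with Jacobian $1-r\kappa(s)$. After possibly shrinking $\delta_0$, this Jacobian lies in an interval $[c_1,c_2]$ with $0<c_1<c_2$, so $d\boldsymbol{x}$ and $ds\,dr$ differ only by a uniformly bounded factor; moreover $\partial_r$ corresponds to the normal derivative, so $|\partial_r(v\circ\Phi)|\le|\nabla v|\circ\Phi$. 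In both parts I will work with $\tilde v:=v\circ\Phi$ and convert back to Cartesian coordinates at the end.

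For part~\eqref{strip1}, the key identity is the fundamental theorem of calculus along the normal: for any $r\in(-\delta,\delta)$ and any reference slice $r_0\in[\delta_0/2,\delta_0]$, one has $\tilde v(s,r)=\tilde v(s,r_0)-\int_r^{r_0}\partial_t\tilde v(s,t)\,dt$. Squaring, applying Cauchy--Schwarz, and then averaging $r_0$ over $[\delta_0/2,\delta_0]$ eliminates the dependence on the particular reference slice and produces a pointwise-in-$s$ bound of the form $|\tilde v(s,r)|^2\le C\bigl(\|\tilde v(s,\cdot)\|_{L^2(\delta_0/2,\delta_0)}^2+\|\partial_t\tilde v(s,\cdot)\|_{L^2(-\delta_0,\delta_0)}^2\bigr)$. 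Integrating in $s\in[0,L]$ and then in $r\in(-\delta,\delta)$, the outer $r$-integration contributes the factor $2\delta$, and the right-hand side is comparable to $\|v\|_{H^1(U(\Gamma,\delta_0))}^2$, which yields the $\sqrt{\delta}$ estimate.

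For part~\eqref{strip2}, the hypothesis $v|_\Gamma=0$ gives $\tilde v(s,0)=0$, so $\tilde v(s,r)=\int_0^r\partial_t\tilde v(s,t)\,dt$. Cauchy--Schwarz yields $|\tilde v(s,r)|^2\le|r|\int_{-\delta}^{\delta}|\partial_t\tilde v(s,t)|^2\,dt$ for $|r|\le\delta$, and integrating over $(s,r)\in[0,L]\times(-\delta,\delta)$ contributes one factor of $\delta$ from $|r|$ and a second from the $r$-integration, producing $\delta^2$ overall. Converting back to Cartesian variables and using $|\partial_r\tilde v|\le|\nabla v|\circ\Phi$ delivers \eqref{strip2} with the $L^2$ norm of $\nabla v$ localized to the same strip $U(\Gamma,\delta)$.

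The only genuine technicality is establishing once and for all that $\Phi$ is a uniform bi-Lipschitz $C^1$-diffeomorphism with Jacobian bounded away from $0$ and $\infty$; this is a standard consequence of $\Gamma$ being a compact $C^2$-smooth closed curve (equivalently, $\rho\in C^2(U(\Gamma,\delta_0))$), and after this is in place all remaining constants depend only on $\Gamma$ and $\delta_0$, and are independent of $\delta$, as required.
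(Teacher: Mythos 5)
Your proof is correct. The paper does not prove this lemma at all --- it simply cites Lemma 2.1 of \cite{Li2010Optimal} --- and your tubular-coordinate (Fermi coordinate) argument, with the fundamental theorem of calculus along normal lines, averaging over a reference slice for \eqref{strip1}, and integration from the zero trace on $\Gamma$ for \eqref{strip2}, is precisely the standard proof of this $\delta$-strip estimate; the only routine point left implicit is that the line-by-line absolute continuity and the vanishing at $r=0$ hold for a.e.\ normal line of an $H^1$ function with zero trace, which follows by the usual density argument.
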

Given two scalar or vector-valued functions $v^+$ and $v^-$ on an element $T$,  define
 \begin{equation*}
 [\![v^\pm]\!](\boldsymbol{x}):=v^+(\boldsymbol{x})-v^-(\boldsymbol{x})\qquad \forall x\in T.
 \end{equation*}
We note that the difference between $ [\![\cdot]\!](\boldsymbol{x})$ and $[\cdot]_\Gamma(\boldsymbol{x})$ is the range of $\boldsymbol{x}$.
On each interface element $T\in\mathcal{T}_h^\Gamma$, define the following auxiliary functions  
\begin{equation}\label{def_auxi0}
\boldsymbol{\Psi}^s_T\in \mathcal{RT}(T),~ \boldsymbol{\Upsilon}^s_T\in \mathcal{RT}(T), ~\boldsymbol{\Theta}^s_T\in \mathcal{RT}(T),~ s=+,-
\end{equation}
and 
\begin{equation}\label{def_auxi1}
\boldsymbol{\Psi}_T|_{T_h^s}=\boldsymbol{\Psi}^s_T,~\boldsymbol{\Upsilon}_T|_{T_h^s}=\boldsymbol{\Upsilon}^s_T, ~\boldsymbol{\Theta}_T|_{T_h^s}=\boldsymbol{\Theta}^s_T,~ s=+,-,
\end{equation}
such that
\begin{equation}\label{def_auxi_condi}
\begin{aligned}
&[\![\boldsymbol{\Psi}_T^\pm\cdot \textbf{n}_h]\!](\boldsymbol{x}_T)=1,\quad [\![\beta_T^\pm\boldsymbol{\Psi}_T^\pm\cdot \textbf{t}_h]\!](\boldsymbol{x}_T)=0,\quad  [\![\nabla \cdot \boldsymbol{\Psi}_T^\pm]\!]=0,\quad N_{i,T}(\boldsymbol{\Psi}_T)=0,\\
&[\![\boldsymbol{\Upsilon}_T^\pm\cdot \textbf{n}_h]\!](\boldsymbol{x}_T)=0,\quad [\![\beta_T^\pm\boldsymbol{\Upsilon}_T^\pm\cdot \textbf{t}_h]\!](\boldsymbol{x}_T)=1,\quad  [\![\nabla \cdot \boldsymbol{\Upsilon}_T^\pm]\!]=0,\quad N_{i,T}(\boldsymbol{\Upsilon}_T)=0,\\
&[\![\boldsymbol{\Theta}_T^\pm\cdot \textbf{n}_h]\!](\boldsymbol{x}_T)=0,\quad [\![\beta_T^\pm\boldsymbol{\Theta}_T^\pm\cdot \textbf{t}_h]\!](\boldsymbol{x}_T)=0,\quad [\![ \nabla \cdot \boldsymbol{\Theta}_T^\pm]\!]=1,\quad N_{i,T}(\boldsymbol{\Theta}_T)=0,\\
\end{aligned}
\end{equation}
where $\boldsymbol{x}_T$ and $\beta_T^\pm$ are the same as that in (\ref{IFE_con2}).

\begin{lemma}\label{lem_esti_auxi}
On each interface element $T\in\mathcal{T}_h^\Gamma$, the functions $\boldsymbol{\Psi}^\pm_T$, $\boldsymbol{\Upsilon}^\pm_T$ and $\boldsymbol{\Theta}^\pm_T$ defined in (\ref{def_auxi0})-(\ref{def_auxi_condi}) exist and satisfy 
\begin{equation}\label{auxi_est0}
\|\boldsymbol{\Psi}_T^s\|_{L^2(T)}\leq C\frac{\beta_{max}}{\beta_{min}}h_T,~ \|\boldsymbol{\Upsilon}_T^s\|_{L^2(T)}\leq C\frac{1}{\beta_{min}}h_T, ~\|\boldsymbol{\Theta}_T^s\|_{L^2(T)}\leq C\frac{\beta_{max}}{\beta_{min}}h_T^2,~s=+,-, 
\end{equation}
where $\beta_{max}$ and $\beta_{min}$  are define in (\ref{betamaxmin}), and the constant $C$  depends only on the shape regularity parameter $\varrho$.
\end{lemma}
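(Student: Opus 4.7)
The plan is to handle existence and uniqueness by a purely algebraic argument and then to exhibit explicit formulas that make the $L^2$ bounds transparent.

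For existence and uniqueness, let $V_0$ be the space of piecewise Raviart--Thomas functions on $T_h^+\cup T_h^-$ whose three edge degrees of freedom $N_{i,T}(\cdot)$ vanish. Since $\dim\mathcal{RT}(T)=3$ on each side, $\dim V_0=6-3=3$. On $V_0$, the triple of linear functionals
\begin{equation*}
J_n(\boldsymbol{\phi})=[\![\boldsymbol{\phi}^\pm\cdot\textbf{n}_h]\!](\boldsymbol{x}_T),\quad J_t(\boldsymbol{\phi})=[\![\beta_T^\pm\boldsymbol{\phi}^\pm\cdot\textbf{t}_h]\!](\boldsymbol{x}_T),\quad J_d(\boldsymbol{\phi})=[\![\nabla\cdot\boldsymbol{\phi}^\pm]\!]
\end{equation*}
has trivial joint kernel: any $\boldsymbol{\phi}\in V_0$ with $J_n=J_t=J_d=0$ lies in $\mathcal{IRT}(T)$ with zero DOFs and is zero by Lemma~\ref{lem_basis}. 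Rank-nullity then makes $(J_n,J_t,J_d):V_0\to\mathbb{R}^3$ an isomorphism, so $\boldsymbol{\Psi}_T,\boldsymbol{\Upsilon}_T,\boldsymbol{\Theta}_T$ exist and are uniquely determined as preimages of the standard basis vectors.

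For the $L^2$ estimates, I would exhibit an explicit basis of $V_0$ by mimicking the trick used for $\boldsymbol{\phi}^{J_t}$ in~(\ref{phijt_c}). Define
\begin{equation*}
\boldsymbol{\phi}^{J_n}=\boldsymbol{\omega}_n-\Pi_{h,T}\boldsymbol{\omega}_n,\qquad \boldsymbol{\phi}^{J_d}=\boldsymbol{\omega}_d-\Pi_{h,T}\boldsymbol{\omega}_d,
\end{equation*}
with $\boldsymbol{\omega}_n=\textbf{n}_h$ on $T_h^+$ and $\mathbf{0}$ on $T_h^-$, and $\boldsymbol{\omega}_d=\tfrac{1}{2}(\boldsymbol{x}-\boldsymbol{x}_T)$ on $T_h^+$ and $\mathbf{0}$ on $T_h^-$. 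A direct computation shows that the jump triples $(J_n,J_t,J_d)$ of $\boldsymbol{\phi}^{J_n},\boldsymbol{\phi}^{J_t},\boldsymbol{\phi}^{J_d}$ take the form $(1,\alpha_n,0)$, $(0,\alpha_t,0)$, $(0,\alpha_d,1)$ respectively, where the coefficients $\alpha_n,\alpha_t,\alpha_d$ depend only on $\beta_T^\pm$ and on the values of $\Pi_{h,T}\boldsymbol{\omega}_n,\Pi_{h,T}\boldsymbol{\omega},\Pi_{h,T}\boldsymbol{\omega}_d$ at $\boldsymbol{x}_T$. The key lower bound $|\alpha_t|\geq\min(\beta_T^+,\beta_T^-)\geq\beta_{min}$ is exactly the estimate~(\ref{unique_fenmu}) established in the proof of Lemma~\ref{lem_basis}. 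Inverting the resulting lower triangular system yields
\begin{equation*}
\boldsymbol{\Upsilon}_T=\alpha_t^{-1}\boldsymbol{\phi}^{J_t},\qquad \boldsymbol{\Psi}_T=\boldsymbol{\phi}^{J_n}-(\alpha_n/\alpha_t)\boldsymbol{\phi}^{J_t},\qquad \boldsymbol{\Theta}_T=\boldsymbol{\phi}^{J_d}-(\alpha_d/\alpha_t)\boldsymbol{\phi}^{J_t}.
\end{equation*}

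The bounds now follow from bookkeeping. Trivially $\|\boldsymbol{\omega}_n\|_{L^\infty(T)},\|\boldsymbol{\omega}\|_{L^\infty(T)}\le 1$ and $\|\boldsymbol{\omega}_d\|_{L^\infty(T)}\le Ch_T$; combining this with the shape-regular $L^\infty$-stability of $\Pi_{h,T}$ (a consequence of the basis formula $\boldsymbol{\lambda}_{i,T}=\tfrac{|e_i|}{2|T|}\overrightarrow{A_i\boldsymbol{x}}$ together with $|T|\ge C\varrho^{-2}h_T^2$) yields $\|\boldsymbol{\phi}^{J_n}|_{T_h^s}\|_{L^2(T)},\|\boldsymbol{\phi}^{J_t}|_{T_h^s}\|_{L^2(T)}\le Ch_T$ and $\|\boldsymbol{\phi}^{J_d}|_{T_h^s}\|_{L^2(T)}\le Ch_T^2$ after extending each piece naturally as an $\mathcal{RT}(T)$ function on all of $T$. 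Together with $|\alpha_n|\le C\beta_{max}$, $|\alpha_d|\le C\beta_{max}h_T$, and $\alpha_t^{-1}\le\beta_{min}^{-1}$, this produces the announced estimates, with the factor $\beta_{max}/\beta_{min}$ arising from $|\alpha_n|/\alpha_t$ in the bound for $\boldsymbol{\Psi}_T$ and from $|\alpha_d|/\alpha_t$ in the bound for $\boldsymbol{\Theta}_T$. The main point requiring care is to ensure that all constants are independent of the interface position relative to the mesh; this is exactly what the maximum-angle condition $\alpha_{max}\le\pi/2$ and the shape regularity buy, via Lemma~\ref{lem_01} and the $L^\infty$-stability of $\Pi_{h,T}$ on piecewise constant and piecewise linear inputs.
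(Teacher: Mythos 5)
Your proof is correct, but it takes a route that differs from the paper's in a meaningful way. The paper constructs each auxiliary function in a single step as $\boldsymbol{z}-\Pi^{IFE}_{h,T}\boldsymbol{z}$ for a one-sided datum $\boldsymbol{z}$ (see (\ref{Psi_cos}), (\ref{Upsilon_cos}), (\ref{Theta_cos})): since the IFE interpolant lies in $\mathcal{IRT}(T)$, it annihilates all three jump functionals simultaneously, so each difference inherits the jump triple of $\boldsymbol{z}$ directly and the $L^2$ bounds follow from the IFE basis estimates of Lemma~\ref{lem_est_IFE}, which is where the factor $\beta_{max}/\beta_{min}$ enters. You instead use the \emph{standard} interpolant $\Pi_{h,T}$, accept the residual tangential jumps $\alpha_n,\alpha_d$, and eliminate them by inverting a triangular system against $\boldsymbol{\phi}^{J_t}$; the contrast factor then appears as $|\alpha_n|/\alpha_t$ and $|\alpha_d|/\alpha_t$, with the crucial lower bound $\alpha_t\geq\beta_{min}$ being exactly (\ref{unique_fenmu}) from Lemma~\ref{lem_01}. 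Both arguments therefore rest on the same geometric input (the maximum angle condition via Lemma~\ref{lem_01}); the paper's version is shorter because no system needs to be solved, while yours avoids invoking Lemma~\ref{lem_est_IFE} and, pleasantly, obtains the sharper $1/\beta_{min}$ bound for $\boldsymbol{\Upsilon}_T$ without the case split on $\beta_T^+\gtrless\beta_T^-$ that the paper performs in (\ref{Upsilon_cos2}). Your dimension-counting argument for existence and uniqueness (rank--nullity on the three-dimensional space $V_0$, with injectivity of the jump map supplied by Lemma~\ref{lem_basis}) is also a clean addition; the paper settles existence only by exhibiting the formulas and does not address uniqueness explicitly. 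One small point worth making explicit in your write-up: the vanishing of $J_n$ at the single point $\boldsymbol{x}_T$ implies condition (\ref{IFE_con1}) on the whole segment $\Gamma_h\cap T$ because normal components of $\mathcal{RT}(T)$ functions are constant along straight lines, as noted in the paper's Remark 3.1; this is needed for your kernel argument to place the function in $\mathcal{IRT}(T)$.
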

\begin{proof}
We construct $\boldsymbol{\Psi}_T$ as follows,
\begin{equation}\label{Psi_cos}
\boldsymbol{\Psi}_T=\boldsymbol{z}-\Pi_{h,T}^{IFE}\boldsymbol{z},~~~\boldsymbol{z}=\left\{
\begin{aligned}
&\boldsymbol{z}^+=\textbf{n}_h\qquad&&\mbox{ in } T_h^+,\\
&\boldsymbol{z}^-=\mathbf{0}&&\mbox{ in } T_h^-.
\end{aligned}\right.
\end{equation}
It is easy to verify that $\boldsymbol{\Psi}_T^+=\boldsymbol{\Psi}_T|_{T_h^+}$ and $\boldsymbol{\Psi}_T^-=\boldsymbol{\Psi}_T^-|_{T_h^-}$ exist and satisfy (\ref{def_auxi0})-(\ref{def_auxi_condi}). From (\ref{Psi_cos}) and (\ref{dof}), we have $|N_{i,T}(\boldsymbol{z})|\leq C$, $i=1,2,3$. Thus,  by  Lemma~\ref{lem_est_IFE}, it holds
\begin{equation*}
\|\Pi_{h,T}^{IFE}\boldsymbol{z}\|_{L^\infty(T)}\leq \sum_{i=1}^{3}|N_{i,T}(\boldsymbol{z})|\|\boldsymbol{\phi}_{i,T}\|_{L^\infty(T)}\leq C\max\{\beta_T^+/\beta_T^-,\beta_T^-/\beta_T^+\}\leq C\frac{\beta_{max}}{\beta_{min}},
\end{equation*}
which together with (\ref{Psi_cos}) yields the first estimate in (\ref{auxi_est0}) 
\begin{equation*}
\|\boldsymbol{\Psi}^s_T\|_{L^2(T)} \leq Ch_T\|\boldsymbol{\Psi}^s_T\|_{L^\infty(T)}=Ch_T \|\boldsymbol{z}^s-\Pi_{h,T}^{IFE}\boldsymbol{z}\|_{L^\infty(T)}\leq C\frac{\beta_{max}}{\beta_{min}}h_T,~~s=+,-.
\end{equation*}
Similarly, we construct $\boldsymbol{\Upsilon}_T$ as
\begin{equation}\label{Upsilon_cos}
\boldsymbol{\Upsilon}_T=\boldsymbol{z}-\Pi_{h,T}^{IFE}\boldsymbol{z},~~~\boldsymbol{z}=\left\{
\begin{aligned}
&\boldsymbol{z}^+\qquad&&\mbox{ in } T_h^+,\\
&\boldsymbol{z}^-&&\mbox{ in } T_h^-.
\end{aligned}\right.
\end{equation}
where
\begin{equation}\label{Upsilon_cos2}
\begin{aligned}
&\boldsymbol{z}^+=\textbf{t}_h/\beta_T^+,\quad &&\boldsymbol{z}^-=\mathbf{0}\quad &&\mbox{ if }\quad \beta_T^+>\beta_T^-,\\
&\boldsymbol{z}^+=\mathbf{0},\quad &&\boldsymbol{z}^-=-\textbf{t}_h/\beta_T^-\quad &&\mbox{ if } \quad\beta_T^+\leq\beta_T^-.\\
\end{aligned}
\end{equation}
We can also verify easily that $\boldsymbol{\Upsilon}_T^s=\boldsymbol{\Upsilon}_T|_{T_h^s}$, $s=+,-$ exist and satisfy (\ref{def_auxi0})-(\ref{def_auxi_condi}).  Using (\ref{Upsilon_cos2}), (\ref{dof}) and Lemma~\ref{lem_est_IFE}, we have
\begin{equation*}
\|\Pi_{h,T}^{IFE}\boldsymbol{z}\|_{L^\infty(T)}\leq \sum_{i=1}^{3}|N_{i,T}(\boldsymbol{z})|\|\boldsymbol{\phi}_{i,T}\|_{L^\infty(T)}\leq C\max\{1/\beta_T^-,1/\beta_T^+\}\leq \frac{C}{\beta_{min}},
\end{equation*}
which together with (\ref{Upsilon_cos}) and (\ref{Upsilon_cos2}) implies the second estimate in (\ref{auxi_est0}) 
\begin{equation*}
\|\boldsymbol{\Upsilon}^s_T\|_{L^2(T)} \leq Ch_T\|\boldsymbol{\Upsilon}^s_T\|_{L^\infty(T)}=Ch_T \|\boldsymbol{z}^s-\Pi_{h,T}^{IFE}\boldsymbol{z}\|_{L^\infty(T)}\leq C\frac{1}{\beta_{min}}h_T,~~s=+,-.
\end{equation*}
Finally, we construct 
\begin{equation}\label{Theta_cos}
\boldsymbol{\Theta}_T=\boldsymbol{z}-\Pi_{h,T}^{IFE}\boldsymbol{z},~~~\boldsymbol{z}=\left\{
\begin{aligned}
&\boldsymbol{z}^+=\frac{1}{2} (\boldsymbol{x}-\boldsymbol{x}_T)\qquad&&\mbox{ in } T_h^+,\\
&\boldsymbol{z}^-=\mathbf{0}&&\mbox{ in } T_h^-,
\end{aligned}\right.
\end{equation}
where $\boldsymbol{x}_T$ is the center of  the largest circle inscribed in the element $T$. Obviously, the functions $\boldsymbol{\Theta}_T^s=\boldsymbol{\Theta}_T|_{T_h^s}$, $s=+,-$  exist and satisfy (\ref{def_auxi0})-(\ref{def_auxi_condi}).  Now we have
\begin{equation*}
\|\Pi_{h,T}^{IFE}\boldsymbol{z}\|_{L^\infty(T)}\leq \sum_{i=1}^{3}|N_{i,T}(\boldsymbol{z})|\|\boldsymbol{\phi}_{i,T}\|_{L^\infty(T)}\leq Ch_T\max\{\beta_T^+/\beta_T^-,\beta_T^-/\beta_T^+\}\leq C\frac{\beta_{max}}{\beta_{min}}h_T.
\end{equation*}
Therefore, by (\ref{Theta_cos}), we get
\begin{equation*}
\|\boldsymbol{\Theta}^s_T\|_{L^2(T)} \leq Ch_T\|\boldsymbol{\Theta}^s_T\|_{L^\infty(T)}=Ch_T \|\boldsymbol{z}^s-\Pi_{h,T}^{IFE}\boldsymbol{z}\|_{L^\infty(T)}\leq C\frac{\beta_{max}}{\beta_{min}}h_T^2,
\end{equation*}
which competes the proof of this lemma.
\end{proof}

To show that functions in $\mathcal{IRT}(\mathcal{T}_h)$ can approximate $\boldsymbol{q}\in \widetilde{H}(\mathrm{div};\Omega)$ optimally in the $L^2$-norm,  we need to  interpolate extensions of $\boldsymbol{q}|_{\Omega^s}$, $s=+,-$. For any $\boldsymbol{q}^s\in (H^1(\Omega^s))^2$, from the standard Sobolev extension property (see \cite{Gilbargbook}),  there exist extensions $\boldsymbol{q}_E^s\in (H^1(\Omega^s_{ext}))^2$, $s=+,-$ such that
\begin{equation}\label{extension}
 \boldsymbol{q}_E^s|_{\Omega^s}=\boldsymbol{q}^s~~\mbox{ and }~~| \boldsymbol{q}_E^s|_{H^i(\Omega^s_{ext})}\leq C|\boldsymbol{q}^s|_{H^i(\Omega^s)},\quad  i=0,1,2,\quad s=+,-,
\end{equation}
where $\Omega^s_{ext}:=\Omega^s\cup U(\Gamma,\delta_0)$ with a fixed $\delta_0$ satisfying Assumption~\ref{assumption_delta}.

\begin{lemma}\label{lem_decomp}
On each interface element $T\in\mathcal{T}_h^\Gamma$, for any $\boldsymbol{q}\in \widetilde{H}^1(\mathrm{div};\Omega)$,  it holds that
\begin{equation}\label{decom}
\begin{aligned}
(\Pi_{T}\boldsymbol{q}_E^s)(\boldsymbol{x})-(\Pi_{T}^{IFE}\boldsymbol{q})^s(\boldsymbol{x})&=[\![(\Pi_{T}\boldsymbol{q}_E^\pm)\cdot\textbf{n}_h]\!](\boldsymbol{x}_T)\boldsymbol{\Psi}^s_T(\boldsymbol{x})+[\![\beta_T^\pm(\Pi_{T}\boldsymbol{q}_E^\pm)\cdot\textbf{t}_h]\!](\boldsymbol{x}_T)\boldsymbol{\Upsilon}_T^s(\boldsymbol{x})\\
&+[\![\nabla \cdot (\Pi_{T}\boldsymbol{q}_E^\pm)]\!]\boldsymbol{\Theta}_T^s(\boldsymbol{x})+\sum_{i=1}^3g_{i}\boldsymbol{\phi}_{i,T}(\boldsymbol{x})\quad\forall x\in T,
\end{aligned}
\end{equation}
where 
\begin{equation}\label{decom2}
g_{i}=\frac{1}{|e_i|}\left(\int_{e_i\cap \Omega^+} (\Pi_{T}\boldsymbol{q}_E^+-\boldsymbol{q}_E^+)\cdot \textbf{n}_{i,T}ds+\int_{e_i\cap \Omega^-} (\Pi_{T}\boldsymbol{q}_E^--\boldsymbol{q}_E^-)\cdot \textbf{n}_{i,T}ds\right).
\end{equation}
\end{lemma}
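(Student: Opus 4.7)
The plan is to exhibit both sides of (\ref{decom}) as elements of the six-dimensional vector space
\[
V(T):=\bigl\{\boldsymbol{v}:T\to\mathbb{R}^2 \;\big|\; \boldsymbol{v}|_{T_h^+}\in\mathcal{RT}(T),\ \boldsymbol{v}|_{T_h^-}\in\mathcal{RT}(T)\bigr\},
\]
and then identify them through a dual-basis expansion. Specifically, I would introduce the six linear functionals
\[
F_1(\boldsymbol{v})=[\![\boldsymbol{v}^\pm\cdot\mathbf{n}_h]\!](\boldsymbol{x}_T),\ F_2(\boldsymbol{v})=[\![\beta_T^\pm\boldsymbol{v}^\pm\cdot\mathbf{t}_h]\!](\boldsymbol{x}_T),\ F_3(\boldsymbol{v})=[\![\nabla\cdot\boldsymbol{v}^\pm]\!],\ F_{3+i}(\boldsymbol{v})=N_{i,T}(\boldsymbol{v}),
\]
which are six conditions on $V(T)$. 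The defining properties (\ref{def_auxi_condi}) of $\boldsymbol{\Psi}_T,\boldsymbol{\Upsilon}_T,\boldsymbol{\Theta}_T$ together with the IFE basis identities $N_{j,T}(\boldsymbol{\phi}_{i,T})=\delta_{ij}$ and the membership $\boldsymbol{\phi}_{i,T}\in\mathcal{IRT}(T)$ (so that $F_1,F_2,F_3$ vanish on it) show immediately that $\{\boldsymbol{\Psi}_T,\boldsymbol{\Upsilon}_T,\boldsymbol{\Theta}_T,\boldsymbol{\phi}_{1,T},\boldsymbol{\phi}_{2,T},\boldsymbol{\phi}_{3,T}\}$ is a dual basis for $\{F_1,\dots,F_{3+3}\}$. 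A quick dimension count (each RT piece has three scalar coefficients, so $\dim V(T)=6$) then forces these six functions to form a basis of $V(T)$, and every $\boldsymbol{v}\in V(T)$ admits the expansion $\boldsymbol{v}=\sum_{k=1}^6 F_k(\boldsymbol{v})\,\boldsymbol{b}_k$ with $\boldsymbol{b}_k$ the listed functions.

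Next I would apply this expansion to $\boldsymbol{r}\in V(T)$ defined by $\boldsymbol{r}|_{T_h^s}(\boldsymbol{x})=(\Pi_T\boldsymbol{q}_E^s)(\boldsymbol{x})-(\Pi_T^{IFE}\boldsymbol{q})^s(\boldsymbol{x})$. The first three coefficients are immediate from the fact that $\Pi_T^{IFE}\boldsymbol{q}\in\mathcal{IRT}(T)$ has vanishing jumps: the IFE constraints (\ref{IFE_con1})--(\ref{IFE_con3}) give $F_j(\Pi_T^{IFE}\boldsymbol{q})=0$ for $j=1,2,3$, so that $F_j(\boldsymbol{r})=F_j(\Pi_T\boldsymbol{q}_E^\pm)$, yielding the first three terms in (\ref{decom}) directly. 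The fourth through sixth coefficients are $N_{i,T}(\boldsymbol{r})$, which I would reduce to $g_i$ as follows: since the normal component of each RT piece is constant along $e_i$, and since $N_{i,T}(\Pi_T^{IFE}\boldsymbol{q})=N_{i,T}(\boldsymbol{q})$ (by the defining condition (\ref{local_IFE_inter}) of the IFE interpolation), I would split the integral
\[
\int_{e_i}(\Pi_T^{IFE}\boldsymbol{q})\cdot\mathbf{n}_{i,T}\,ds=\int_{e_i}\boldsymbol{q}\cdot\mathbf{n}_{i,T}\,ds
=\int_{e_i\cap\Omega^+}\boldsymbol{q}_E^+\cdot\mathbf{n}_{i,T}\,ds+\int_{e_i\cap\Omega^-}\boldsymbol{q}_E^-\cdot\mathbf{n}_{i,T}\,ds,
\]
using $\boldsymbol{q}|_{\Omega^s}=\boldsymbol{q}_E^s$, and combine with the companion split for $\int_{e_i}\Pi_T\boldsymbol{q}_E^s\cdot\mathbf{n}_{i,T}\,ds$. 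A straightforward rearrangement of these integrals produces exactly the expression (\ref{decom2}) for $g_i$.

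Once the six coefficients match, the dual basis expansion collapses into (\ref{decom}), completing the proof. The step I anticipate as the main technical obstacle is the careful integral bookkeeping that matches $N_{i,T}(\boldsymbol{r})$ to $g_i$: one must be consistent about whether one integrates over $e_i\cap T_h^\pm$ (arising naturally from $\boldsymbol{r}$ living on $T_h^\pm$) or over $e_i\cap\Omega^\pm$ (as in the stated $g_i$), and exploit the fact that each $\Pi_T\boldsymbol{q}_E^s\cdot\mathbf{n}_{i,T}$ is a constant on $e_i$ together with the IFE DOF identity to reconcile the two. Beyond this edge accounting, all remaining checks are direct substitutions using (\ref{IFE_con1})--(\ref{IFE_con3}), (\ref{def_auxi_condi}) and (\ref{IFE_basis}).
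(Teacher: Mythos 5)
Your proposal is correct and follows essentially the same route as the paper: the paper likewise expands the difference $\boldsymbol{w}_h=\Pi_T\boldsymbol{q}_E^s-(\Pi_T^{IFE}\boldsymbol{q})^s$ against the six functionals (two point jumps, the divergence jump, and the three edge DOFs), shows the candidate combination of $\boldsymbol{\Psi}_T,\boldsymbol{\Upsilon}_T,\boldsymbol{\Theta}_T,\boldsymbol{\phi}_{i,T}$ agrees with $\boldsymbol{w}_h$ on all six (your dual-basis/dimension-count packaging is just a cleaner phrasing of the paper's ``the difference lies in $\mathcal{IRT}(T)$ with vanishing DOFs, hence is zero''), and then computes the coefficients exactly as you do, including the reduction of $N_{i,T}(\boldsymbol{w}_h)$ to $g_i$ via $N_{i,T}(\Pi_T^{IFE}\boldsymbol{q})=N_{i,T}(\boldsymbol{q})$. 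The edge bookkeeping you flag is harmless because $\Gamma$ and $\Gamma_h$ meet each edge at the same single point, so $e_i\cap T_h^s$ and $e_i\cap\Omega^s$ coincide.
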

\begin{proof}
For simplicity of notation, we let 
\begin{equation}\label{def_wh_sim}
\boldsymbol{w}_h|_{T_h^s}:=\boldsymbol{w}_h^s, ~~\boldsymbol{w}_h^s:=\Pi_{T}\boldsymbol{q}_E^s-(\Pi_{T}^{IFE}\boldsymbol{q})^s,~~s=+,-,
\end{equation}
and define 
\begin{equation}\label{pro_def_zh}
\begin{aligned}
\boldsymbol{z}_h|_{T_h^s}:=\boldsymbol{z}^s_h,\quad \boldsymbol{z}^s_h(\boldsymbol{x})&:=[\![\boldsymbol{w}_h^\pm\cdot\textbf{n}_h]\!](\boldsymbol{x}_T)\boldsymbol{\Psi}^s_T(\boldsymbol{x})+[\![\beta_T^\pm\boldsymbol{w}_h^\pm\cdot\textbf{t}_h]\!](\boldsymbol{x}_T)\boldsymbol{\Upsilon}_T^s(\boldsymbol{x})\\
&\qquad~~~+[\![\nabla \cdot \boldsymbol{w}_h^\pm]\!]\boldsymbol{\Theta}_T^s(\boldsymbol{x})+\sum_{i=1}^3N_{i,T}(\boldsymbol{w}_h)\boldsymbol{\phi}_{i,T}(\boldsymbol{x}),\quad s=+,-.
\end{aligned}
\end{equation}
Next, we prove $\boldsymbol{w}_h=\boldsymbol{z}_h$. First, it is easy to verify from (\ref{IFE_shape})-(\ref{IFE_con3}) and (\ref{IFE_basis}) that the IFE basis functions $\boldsymbol{\phi}_{i,T}$, $i=1,2,3$ satisfy the following identities 
\begin{equation}\label{IFE_basis_jump_xt}
[\![\boldsymbol{\phi}_{i,T}^\pm\cdot \textbf{n}_h]\!](\boldsymbol{x}_T)=0,~ [\![\beta_T^\pm\boldsymbol{\phi}_{i,T}^\pm\cdot \textbf{t}_h]\!](\boldsymbol{x}_T)=0,~  [\![\nabla \cdot \boldsymbol{\phi}_{i,T}^\pm]\!]=0,~ N_{j,T}(\boldsymbol{\phi}_{i,T})=\delta_{ij}, ~j=1,2,3.
\end{equation}
Then, combining (\ref{pro_def_zh})-(\ref{IFE_basis_jump_xt}) and (\ref{def_auxi0})-(\ref{def_auxi_condi}) yields 
\begin{equation*}
\begin{aligned}
&[\![\boldsymbol{z}_h^\pm\cdot\textbf{n}_h]\!](\boldsymbol{x}_T)=[\![\boldsymbol{w}_h^\pm\cdot\textbf{n}_h]\!](\boldsymbol{x}_T),\qquad [\![\beta_T^\pm\boldsymbol{z}_h^\pm\cdot\textbf{t}_h]\!](\boldsymbol{x}_T)=[\![\beta_T^\pm\boldsymbol{w}_h^\pm\cdot\textbf{t}_h]\!](\boldsymbol{x}_T),\\
&[\![\nabla \cdot \boldsymbol{z}_h^\pm]\!]=[\![\nabla \cdot \boldsymbol{w}_h^\pm]\!],\qquad N_{i,T}(\boldsymbol{z}_h)=N_{i,T}(\boldsymbol{w}_h),
\end{aligned}
\end{equation*}
which implies 
\begin{equation*}
\boldsymbol{z}_h-\boldsymbol{w}_h\in \mathcal{IRT}(T),\quad N_{i,T}(\boldsymbol{z}_h-\boldsymbol{w}_h)=0, ~i=1,2,3.
\end{equation*}
By (\ref{ire_express}) we conclude $\boldsymbol{z}_h-\boldsymbol{w}_h=0$ (i.e., $\boldsymbol{z}_h=\boldsymbol{w}_h$).   We now have the following  decomposition
\begin{equation}\label{pro_decomp1}
\begin{aligned}
\boldsymbol{w}_h=\boldsymbol{z}_h&=[\![\boldsymbol{w}_h^\pm\cdot\textbf{n}_h]\!](\boldsymbol{x}_T)\boldsymbol{\Psi}^s_T(\boldsymbol{x})+[\![\beta_T^\pm\boldsymbol{w}_h^\pm\cdot\textbf{t}_h]\!](\boldsymbol{x}_T)\boldsymbol{\Upsilon}_T^s(\boldsymbol{x})\\
&\qquad~~~+[\![\nabla \cdot \boldsymbol{w}_h^\pm]\!]\boldsymbol{\Theta}_T^s(\boldsymbol{x})+\sum_{i=1}^3N_{i,T}(\boldsymbol{w}_h)\boldsymbol{\phi}_{i,T}(\boldsymbol{x}),\quad s=+,-.
\end{aligned}
\end{equation}

To prove the desired result  (\ref{decom}),  it remains to estimate the coefficients in the above identity.
Using the facts  from (\ref{IFE_con1})-(\ref{IFE_con3}) that 
\begin{equation*}
[\![(\Pi_{T}^{IFE}\boldsymbol{q})^\pm]\!](\boldsymbol{x}_T)=0,~[\![\beta_T^\pm(\Pi_{T}^{IFE}\boldsymbol{q})^\pm\cdot\textbf{t}_h]\!](\boldsymbol{x}_T)=0,~[\![\nabla \cdot (\Pi_{T}^{IFE}\boldsymbol{q})^\pm]\!]=0.
\end{equation*}
and  the definition of $\boldsymbol{w}_h$ in (\ref{def_wh_sim}), we can derive
\begin{equation}\label{pro_decomp2}
\begin{aligned}
&[\![\boldsymbol{w}_h^\pm\cdot\textbf{n}_h]\!](\boldsymbol{x}_T)=[\![\Pi_{T}\boldsymbol{q}_E^\pm-(\Pi_{T}^{IFE}\boldsymbol{q})^\pm]\!](\boldsymbol{x}_T)=[\![\Pi_{T}\boldsymbol{q}_E^\pm]\!](\boldsymbol{x}_T),\\
&[\![\beta_T^\pm\boldsymbol{w}_h^\pm\cdot\textbf{t}_h]\!](\boldsymbol{x}_T)=[\![\beta_T^\pm(\Pi_{T}\boldsymbol{q}_E^\pm-(\Pi_{T}^{IFE}\boldsymbol{q})^\pm)\cdot\textbf{t}_h]\!](\boldsymbol{x}_T)=[\![\beta_T^\pm\Pi_{T}\boldsymbol{q}_E^\pm\cdot\textbf{t}_h]\!](\boldsymbol{x}_T),\\
&[\![\nabla \cdot \boldsymbol{w}_h^\pm]\!]=[\![\nabla \cdot (\Pi_{T}\boldsymbol{q}_E^\pm-(\Pi_{T}^{IFE}\boldsymbol{q})^\pm)]\!]=[\![\nabla \cdot \Pi_{T}\boldsymbol{q}_E^\pm]\!],
\end{aligned}
\end{equation}
and
\begin{equation}\label{pro_decomp3}
\begin{aligned}
N_{i,T}(\boldsymbol{w}_h)&=\frac{1}{|e_i|}\sum_{s=+,-}\int_{e_i\cap\Omega^s}\left(\Pi_{T}\boldsymbol{q}_E^s-(\Pi_{T}^{IFE}\boldsymbol{q})^s\right)\cdot\textbf{n}_{i,T}ds\\
&=\frac{1}{|e_i|}\sum_{s=+,-}\int_{e_i\cap\Omega^s}\left(\Pi_{T}\boldsymbol{q}_E^s-\boldsymbol{q}_E^s+\boldsymbol{q}_E^s-(\Pi_{T}^{IFE}\boldsymbol{q})^s\right)\cdot\textbf{n}_{i,T}ds\\
&=\frac{1}{|e_i|}\sum_{s=+,-}\int_{e_i\cap\Omega^s}\left(\Pi_{T}\boldsymbol{q}_E^s-\boldsymbol{q}_E^s\right)\cdot\textbf{n}_{i,T}ds,\\
\end{aligned}
\end{equation}
where we have used the following property of the IFE interpolation operator $\Pi_{T}^{IFE}$  from  (\ref{local_IFE_inter})
\begin{equation*}
N_{i,T}(\boldsymbol{q}_E-\Pi_{T}^{IFE}\boldsymbol{q})=\sum_{s=+,-}\int_{e_i\cap \Omega^s}(\boldsymbol{q}_E^s-(\Pi_{T}^{IFE}\boldsymbol{q})^s)ds=0.
\end{equation*}
Substituting  (\ref{pro_decomp2}) and (\ref{pro_decomp3})  into (\ref{pro_decomp1}) completes the proof of this lemma.
\end{proof}

\begin{theorem}\label{theo_inter1ps}
For any $\boldsymbol{q}\in \widetilde{H}^1(\mathrm{div};\Omega)$, let $\boldsymbol{q}_E^s$ be defined in (\ref{extension}), then there exists a constant $C$, independent of $h$, $\beta_{max}$, $\beta_{min}$ and the interface location relative to the mesh, such that
\begin{equation*}
\sum_{T\in\mathcal{T}_h^\Gamma}\|\boldsymbol{q}_E^s-(\Pi_{T}^{IFE}\boldsymbol{q})^s\|^2_{L^2(T)}\leq C\frac{\beta^2_{max}}{\beta^2_{min}}h_\Gamma^2\sum_{s=+,-}\|\boldsymbol{q}_E^s\|^2_{H^1(U(\Gamma,h_\Gamma))},\qquad s=+,-,
\end{equation*}
where $h_\Gamma=\max_{T\in\mathcal{T}_h^\Gamma}h_T$.
\end{theorem}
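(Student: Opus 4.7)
The plan is to split the error on each $T\in\mathcal{T}_h^\Gamma$ by the triangle inequality,
\begin{equation*}
\|\boldsymbol{q}_E^s-(\Pi_T^{IFE}\boldsymbol{q})^s\|_{L^2(T)}\leq\|\boldsymbol{q}_E^s-\Pi_T\boldsymbol{q}_E^s\|_{L^2(T)}+\|\Pi_T\boldsymbol{q}_E^s-(\Pi_T^{IFE}\boldsymbol{q})^s\|_{L^2(T)},
\end{equation*}
and treat the two pieces separately. The first piece is the classical Raviart-Thomas interpolation error and, after squaring and summing, gives $Ch^2\sum_s\|\boldsymbol{q}_E^s\|_{H^1(U(\Gamma,h_\Gamma))}^2$. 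For the second piece I would invoke the decomposition formula (\ref{decom}) of Lemma \ref{lem_decomp}, which writes $\Pi_T\boldsymbol{q}_E^s-(\Pi_T^{IFE}\boldsymbol{q})^s$ as a linear combination of the four auxiliary objects $\boldsymbol{\Psi}_T^s,\boldsymbol{\Upsilon}_T^s,\boldsymbol{\Theta}_T^s,\boldsymbol{\phi}_{i,T}$, whose $L^2(T)$ (respectively $L^\infty(T)$) norms are already quantified by Lemma \ref{lem_esti_auxi} (respectively Lemma \ref{lem_est_IFE}), each carrying a factor at most $C\beta_{max}/\beta_{min}$. The task thus reduces to bounding the four scalar coefficients in (\ref{decom}).

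The divergence-jump coefficient $[\![\nabla\cdot(\Pi_T\boldsymbol{q}_E^\pm)]\!]$ is easy: the commuting diagram (\ref{commuting}) turns it into $|T|^{-1}\int_T\nabla\cdot(\boldsymbol{q}_E^+-\boldsymbol{q}_E^-)d\boldsymbol{x}$, which is bounded by $Ch_T^{-1}\|\nabla\cdot(\boldsymbol{q}_E^+-\boldsymbol{q}_E^-)\|_{L^2(T)}$ and pairs with $\|\boldsymbol{\Theta}_T^s\|_{L^2(T)}=O(h_T^2)$ to give the correct $O(h_T)$ contribution. For the edge-defect coefficients $g_i$ in (\ref{decom2}), a trace inequality combined with the standard Raviart-Thomas edgewise estimate $\|\Pi_T\boldsymbol{q}_E^s-\boldsymbol{q}_E^s\|_{L^2(e_i)}\leq Ch_T^{1/2}|\boldsymbol{q}_E^s|_{H^1(T)}$ yields $|g_i|\leq C\sum_s|\boldsymbol{q}_E^s|_{H^1(T)}$, and pairing with $\|\boldsymbol{\phi}_{i,T}\|_{L^2(T)}\leq Ch_T\|\boldsymbol{\phi}_{i,T}\|_{L^\infty(T)}$ gives the required order.

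The main obstacle is the normal- and tangential-jump coefficients, because the interface conditions $[\boldsymbol{q}\cdot\textbf{n}]_\Gamma=0$ and $[\beta\boldsymbol{q}\cdot\textbf{t}]_\Gamma=0$ only hold in the $H^{1/2}(\Gamma)$ trace sense and cannot be evaluated pointwise at $\boldsymbol{x}_T$. Since the function $\phi(\boldsymbol{x}):=(\Pi_T\boldsymbol{q}_E^+-\Pi_T\boldsymbol{q}_E^-)\cdot\textbf{n}_h$ is affine on $T$, the polynomial inverse estimate $|\phi(\boldsymbol{x}_T)|\leq Ch_T^{-1}\|\phi\|_{L^2(T)}$ transfers the problem to an $L^2(T)$-estimate. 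Decomposing
\begin{equation*}
\phi=(\Pi_T\boldsymbol{q}_E^+-\boldsymbol{q}_E^+)\cdot\textbf{n}_h+(\boldsymbol{q}_E^+-\boldsymbol{q}_E^-)\cdot\textbf{n}_h-(\Pi_T\boldsymbol{q}_E^--\boldsymbol{q}_E^-)\cdot\textbf{n}_h,
\end{equation*}
the outer terms are standard interpolation residuals, while the middle term is the heart of the argument: writing $\textbf{n}_h=\textbf{n}+(\textbf{n}_h-\textbf{n})$ and using the trace condition $(\boldsymbol{q}_E^+-\boldsymbol{q}_E^-)\cdot\textbf{n}|_\Gamma=0$, the $\delta$-strip estimate (\ref{strip2}) applied on $\bigcup_{T\in\mathcal{T}_h^\Gamma}T\subset U(\Gamma,h_\Gamma)$ produces the required $O(h_\Gamma)$ factor, and the perturbation $\textbf{n}_h-\textbf{n}=O(h)$ coming from (\ref{error_nt}) is absorbed using (\ref{strip1}). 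The tangential jump is handled with the same template, using the extra ingredients $|\textbf{t}_h-\textbf{t}|\leq Ch$ and $|\beta_T^s-\beta^s|\leq Ch$ from (\ref{error_nt}) and (\ref{cond_beta}) to convert $\beta_T^\pm\textbf{t}_h$ back to $\beta\textbf{t}$ before applying the interface condition.

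Squaring each local contribution and summing over $T\in\mathcal{T}_h^\Gamma$ then produces the global bound. The factor $\beta_{max}^2/\beta_{min}^2$ arises because the auxiliary functions (through Lemma \ref{lem_esti_auxi}) and the IFE basis (through Lemma \ref{lem_est_IFE}) each carry a factor $\beta_{max}/\beta_{min}$, and this ratio is squared when the local $L^2(T)$ estimates are squared before summation.
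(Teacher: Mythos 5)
Your proposal is correct and follows essentially the same route as the paper: the same triangle-inequality split, the same use of the decomposition in Lemma \ref{lem_decomp} with the bounds of Lemmas \ref{lem_esti_auxi} and \ref{lem_est_IFE}, the inverse estimate to convert the point values at $\boldsymbol{x}_T$ into $L^2(T)$ norms, and the $\delta$-strip estimate (\ref{strip2}) combined with (\ref{error_nt}) and (\ref{cond_beta}) to handle the normal and tangential jump coefficients. The only cosmetic difference is that the paper bounds the $(\textbf{n}_h-\textbf{n})$ perturbation term directly by $Ch^2\sum_s\|\boldsymbol{q}_E^s\|_{L^2(T)}^2$ rather than via (\ref{strip1}), but this does not change the argument.
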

\begin{proof}
On each interface element $T\in\mathcal{T}_h^\Gamma$,  by the triangle inequality,  we have
\begin{equation}\label{pro_int_1_err1}
\|\boldsymbol{q}_E^s-(\Pi_{T}^{IFE}\boldsymbol{q})^s\|^2_{L^2(T)}\leq 2\|\boldsymbol{q}_E^s-\Pi_{T}\boldsymbol{q}_E^s\|^2_{L^2(T)}+2\|\Pi_{T}\boldsymbol{q}_E^s-(\Pi_{T}^{IFE}\boldsymbol{q})^s\|^2_{L^2(T)},
\end{equation}
The estimate of the first term is standard 
\begin{equation}\label{pro_int_1_err2}
\|\boldsymbol{q}_E^s-\Pi_{T}\boldsymbol{q}_E^s\|^2_{L^2(T)}\leq Ch^2 |\boldsymbol{q}_E^s|^2_{H^1(T)},~ s=+,-.
\end{equation}
For the second term on the right-hand side of (\ref{pro_int_1_err1}), using Lemma~\ref{lem_decomp},  Lemma~\ref{lem_esti_auxi} and Lemma~\ref{lem_est_IFE}, we get 
\begin{equation}\label{pro_int_1_err3}
\begin{aligned}
\|\Pi_{T}\boldsymbol{q}_E^s&-(\Pi_{T}^{IFE}\boldsymbol{q})^s\|^2_{L^2(T)}\\
&\leq C([\![(\Pi_{T}\boldsymbol{q}_E^\pm)\cdot\textbf{n}_h]\!]^2(\boldsymbol{x}_T)\|\boldsymbol{\Psi}^s_T\|^2_{L^2(T)}+[\![\beta_T^\pm(\Pi_{T}\boldsymbol{q}_E^\pm)\cdot\textbf{t}_h]\!]^2(\boldsymbol{x}_T)\|\boldsymbol{\Upsilon}_T^s\|^2_{L^2(T)}\\
&\qquad\qquad+[\![\nabla \cdot (\Pi_{T}\boldsymbol{q}_E^\pm)]\!]^2\|\boldsymbol{\Theta}_T^s\|^2_{L^2(T)}+\sum_{i=1}^3g^2_{i}\|\boldsymbol{\phi}^s_{i,T}\|^2_{L^2(T)})\\
&\leq C\frac{\beta^2_{max}}{\beta^2_{min}}\left(h^2_T[\![(\Pi_{T}\boldsymbol{q}_E^\pm)\cdot\textbf{n}_h]\!]^2(\boldsymbol{x}_T)+h_T^4[\![\nabla \cdot (\Pi_{T}\boldsymbol{q}_E^\pm)]\!]^2+h_T^2\sum_{i=1}^3g^2_i\right)\\
&\qquad\qquad+C\frac{1}{\beta^2_{min}}h^2_T[\![\beta_T^\pm(\Pi_{T}\boldsymbol{q}_E^\pm)\cdot\textbf{t}_h]\!]^2(\boldsymbol{x}_T),
\end{aligned}
\end{equation}
where $g_i$ is defined in (\ref{decom2}). Next, we estimate these terms on the right-hand side of (\ref{pro_int_1_err3})  one by one.
By the standard inverse inequality and the estimate (\ref{error_nt}), we find
\begin{equation}\label{pro_int_1_err4}
\begin{aligned}
&[\![(\Pi_{T}\boldsymbol{q}_E^\pm)\cdot\textbf{n}_h]\!]^2(\boldsymbol{x}_T)=\|[\![(\Pi_{T}\boldsymbol{q}_E^\pm)\cdot\textbf{n}_h]\!]\|_{L^\infty(T)}^2\leq Ch_T^{-2}\|[\![(\Pi_{T}\boldsymbol{q}_E^\pm)\cdot\textbf{n}_h]\!]\|_{L^2(T)}^2\\
&\qquad\leq Ch_T^{-2}\left(\|[\![(\Pi_{T}\boldsymbol{q}_E^\pm-\boldsymbol{q}_E^\pm)\cdot\textbf{n}_h +\boldsymbol{q}_E^\pm\cdot (\textbf{n}_h-\textbf{n}+\textbf{n})]\!]\|_{L^2(T)}^2 \right)\\
&\qquad\leq Ch_T^{-2}\left(\|[\![(\Pi_{T}\boldsymbol{q}_E^\pm-\boldsymbol{q}_E^\pm)]\!]\cdot\textbf{n}_h\|^2_{L^2(T)}+\|[\![\boldsymbol{q}_E^\pm]\!]\cdot (\textbf{n}_h-\textbf{n})\|^2_{L^2(T)}+\|[\![\boldsymbol{q}_E^\pm\cdot\textbf{n}]\!]\|^2_{L^2(T)} \right)\\
&\qquad\leq C\sum_{s=+,-}\left(|\boldsymbol{q}^s_E|^2_{H^1(T)}+\|\boldsymbol{q}^s_E\|^2_{L^2(T)}\right)+Ch_T^{-2}\|[\![\boldsymbol{q}_E^\pm\cdot\textbf{n}]\!]\|^2_{L^2(T)}.
\end{aligned}
\end{equation}
Similarly, 
\begin{equation}\label{pro_int_1_err5}
\begin{aligned}
&[\![\beta_T^\pm(\Pi_{T}\boldsymbol{q}_E^\pm)\cdot\textbf{t}_h]\!]^2(\boldsymbol{x}_T)=\|[\![\beta_T^\pm(\Pi_{T}\boldsymbol{q}_E^\pm)\cdot\textbf{t}_h]\!]\|_{L^\infty(T)}^2\leq Ch_T^{-2}\|[\![\beta_T^\pm(\Pi_{T}\boldsymbol{q}_E^\pm)\cdot\textbf{t}_h]\!]\|_{L^2(T)}^2\\
&\quad\leq Ch_T^{-2}\left(\|[\![\beta_T^\pm(\Pi_{T}\boldsymbol{q}_E^\pm-\boldsymbol{q}_E^\pm)\cdot\textbf{t}_h +\beta_T^\pm\boldsymbol{q}_E^\pm\cdot (\textbf{t}_h-\textbf{t}+\textbf{t})]\!]\|_{L^2(T)}^2 \right)\\
&\quad\leq Ch_T^{-2}\left(\|[\![\beta_T^\pm(\Pi_{T}\boldsymbol{q}_E^\pm-\boldsymbol{q}_E^\pm)]\!]\cdot\textbf{t}_h\|^2_{L^2(T)}+\|\|[\![\beta_T^\pm\boldsymbol{q}_E^\pm]\!]\cdot (\textbf{t}_h-\textbf{t})\|^2_{L^2(T)}+\|[\![\beta_T^\pm\boldsymbol{q}_E^\pm\cdot\textbf{t}]\!]\|^2_{L^2(T)} \right)\\
&\quad\leq C\beta^2_{max}\sum_{s=+,-}\left(|\boldsymbol{q}^s_E|^2_{H^1(T)}+\|\boldsymbol{q}^s_E\|^2_{L^2(T)}\right)+Ch_T^{-2}\|[\![\beta_T^\pm\boldsymbol{q}_E^\pm\cdot\textbf{t}]\!]\|^2_{L^2(T)}.
\end{aligned}
\end{equation}
Using the commuting diagram (\ref{commuting}) and the property of the $L^2$ projection: $\|P_T^0 v\|_{L^2(T)}\leq \|v\|_{L^2(T)}$ for all $v\in L^2(T)$,  we can derive 
\begin{equation}\label{pro_int_1_err6}
\begin{aligned}
&[\![\nabla \cdot (\Pi_{T}\boldsymbol{q}_E^\pm)]\!]^2=|T|^{-1}\|[\![\nabla \cdot (\Pi_{T}\boldsymbol{q}_E^\pm)]\!]\|_{L^2(T)}^2\leq Ch_T^{-2}\sum_{s=+,-} \|\nabla \cdot \Pi_{T}\boldsymbol{q}_E^s\|^2_{L^2(T)}\\
&=Ch_T^{-2} \sum_{s=+,-} \|P^0_{T}(\nabla \cdot \boldsymbol{q}_E^s)\|^2_{L^2(T)}\leq Ch_T^{-2}\sum_{s=+,-} \|\nabla \cdot \boldsymbol{q}_E^s\|^2_{L^2(T)}\leq Ch_T^{-2}\sum_{s=+,-} |\boldsymbol{q}_E^s|^2_{H^1(T)}.
\end{aligned}
\end{equation}
By the  Cauchy-Schwarz  inequality and the standard trace inequality, the term $g_i$ defined in (\ref{decom2}) can be estimated as 
\begin{equation}\label{pro_int_1_err7}
\begin{aligned}
g^2_{i}&\leq 2|e_i|^{-2}\sum_{s=+,-}\left(\int_{e_i\cap \Omega^s} (\Pi_{T}\boldsymbol{q}_E^s-\boldsymbol{q}_E^s)\cdot \textbf{n}_{i,T}ds\right)^2\\
&\leq Ch_T^{-1}\sum_{i=+,-} \left\|(\Pi_{T}\boldsymbol{q}_E^s-\boldsymbol{q}_E^s)\cdot \textbf{n}_{i,T}\right\|^2_{L^2(e_i\cap\Omega^s)}\leq Ch_T^{-1}\sum_{i=+,-} \left\|\Pi_{T}\boldsymbol{q}_E^s-\boldsymbol{q}_E^s\right\|^2_{L^2(e_i)}\\
&\leq C\sum_{i=+,-}\left(h_T^{-2}\|\Pi_{T}\boldsymbol{q}_E^s-\boldsymbol{q}_E^s\|^2_{L^2(T)} +|\Pi_{T}\boldsymbol{q}_E^s-\boldsymbol{q}_E^s|^2_{H^1(T)} \right)\leq C\sum_{i=+,-}|\boldsymbol{q}_E^s|^2_{H^1(T)}.
\end{aligned}
\end{equation}
We now combine (\ref{pro_int_1_err1})-(\ref{pro_int_1_err7}) to obtain the error estimate on interface elements
\begin{equation*}
\begin{aligned}
\|\boldsymbol{q}_E^s-(\Pi_{T}^{IFE}\boldsymbol{q})^s&\|^2_{L^2(T)}\leq C\frac{\beta^2_{max}}{\beta^2_{min}}h_T^2\sum_{s=+,-}\|\boldsymbol{q}^s_E\|^2_{H^1(T)}\\
&+C\frac{\beta^2_{max}}{\beta^2_{min}}\|[\![\boldsymbol{q}_E^\pm\cdot\textbf{n}]\!]\|^2_{L^2(T)}+C\frac{1}{\beta^2_{min}}\|[\![\beta_T^\pm\boldsymbol{q}_E^\pm\cdot\textbf{t}]\!]\|^2_{L^2(T)}.
\end{aligned}
\end{equation*}
Summing up the estimate over all interface elements and using Assumption~\ref{assumption_delta}, we have
\begin{equation}\label{pro_int_1_err8}
\begin{aligned}
\sum_{T\in\mathcal{T}_h^\Gamma}\|\boldsymbol{q}_E^s&-(\Pi_{T}^{IFE}\boldsymbol{q})^s\|^2_{L^2(T)}\leq C\frac{\beta^2_{max}}{\beta^2_{min}}h_\Gamma^2\sum_{s=+,-}\|\boldsymbol{q}_E^s\|^2_{H^1(U(\Gamma,h_\Gamma))}\\
&+C\frac{\beta^2_{max}}{\beta^2_{min}}\|[\![\boldsymbol{q}_E^\pm\cdot\textbf{n}]\!]\|^2_{L^2(U(\Gamma,h_\Gamma))}+C\frac{1}{\beta^2_{min}}\|[\![\beta_T^\pm\boldsymbol{q}_E^\pm\cdot\textbf{t}]\!]\|^2_{L^2(U(\Gamma,h_\Gamma))}.
\end{aligned}
\end{equation}
Since the function $\boldsymbol{q}$ belongs to the space $\widetilde{H}^1(\mathrm{div};  \Omega)$ which is defined in (\ref{def_space_div_tilde}), we know 
\begin{equation*}
[\![\boldsymbol{q}_E^\pm\cdot\textbf{n}]\!](\boldsymbol{x})=0\quad \mbox{ and }\quad [\![\beta^\pm\boldsymbol{q}_E^\pm\cdot\textbf{t}]\!](\boldsymbol{x})=0 \quad \forall x\in\Gamma.
\end{equation*}
Therefore, it follows from (\ref{cond_beta}),  the estimate (\ref{strip2}) in Lemma~\ref{lem_strip} and (\ref{new_vari_deri}) that
\begin{equation}\label{pro_int_1_err9}
\begin{aligned}
&\|[\![\boldsymbol{q}_E^\pm\cdot\textbf{n}]\!]\|^2_{L^2(U(\Gamma,h_\Gamma))}\leq Ch_\Gamma^2|[\![\boldsymbol{q}_E^\pm\cdot\textbf{n}]\!]|^2_{H^1(U(\Gamma,h_\Gamma))}\leq Ch_\Gamma^2\sum_{s=+,-}|\boldsymbol{q}_E^s|^2_{H^1(U(\Gamma,h_\Gamma))}
\\
&\|[\![\beta_T^\pm\boldsymbol{q}_E^\pm\cdot\textbf{t}]\!]\|^2_{L^2(U(\Gamma,h_\Gamma))}\leq 2\|[\![\beta^\pm\boldsymbol{q}_E^\pm\cdot\textbf{t}]\!]\|^2_{L^2(U(\Gamma,h_\Gamma))}+2\|[\![(\beta^\pm-\beta_T^\pm)\boldsymbol{q}_E^\pm\cdot\textbf{t}]\!]\|^2_{L^2(U(\Gamma,h_\Gamma))}\\
&\qquad\qquad\qquad\leq Ch_\Gamma^2\left(|[\![\beta^\pm\boldsymbol{q}_E^\pm\cdot\textbf{t}]\!]|^2_{H^1(U(\Gamma,h_\Gamma))}+\sum_{s=+,-}\|\boldsymbol{q}_E^s\cdot\textbf{t}\|^2_{L^2(U(\Gamma,h_\Gamma))}\right)\\
&\qquad\qquad\qquad\leq C\beta^2_{max}h_\Gamma^2\sum_{s=+,-}\|\boldsymbol{q}_E^s\|^2_{H^1(U(\Gamma,h_\Gamma))}.
\end{aligned}
\end{equation}
Finally, substituting (\ref{pro_int_1_err9}) into (\ref{pro_int_1_err8}) completes the proof of this theorem.
\end{proof}

\begin{theorem}\label{theo_inter_error}
For any $\boldsymbol{q}\in \widetilde{H}^1(\mathrm{div};\Omega)$,  there exists a constant $C$, independent of $h$, $\beta_{max}$, $\beta_{min}$ and the interface location relative to the mesh,  such that
\begin{equation*}
\|\boldsymbol{q}-(\Pi_{h}^{IFE}\boldsymbol{q})\|_{L^2(\Omega)}\leq C\frac{\beta_{max}}{\beta_{min}}h\|\boldsymbol{q}\|_{H^1(\Omega^+\cup\Omega^-)}.
\end{equation*}
\end{theorem}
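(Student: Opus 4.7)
The plan is to combine the per--interface-element $L^2$ bound from Theorem~\ref{theo_inter1ps} with the classical Raviart--Thomas interpolation estimate on non-interface elements, and to absorb the geometric mismatch between the exact interface $\Gamma$ and its polygonal approximation $\Gamma_h$ by means of the $\delta$-strip Lemma~\ref{lem_strip} and the Sobolev extension bound (\ref{extension}).

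First I would split
\begin{equation*}
\|\boldsymbol{q}-\Pi_h^{IFE}\boldsymbol{q}\|_{L^2(\Omega)}^2
=\sum_{T\in\mathcal{T}_h^{non}}\|\boldsymbol{q}-\Pi_T\boldsymbol{q}\|_{L^2(T)}^2
+\sum_{T\in\mathcal{T}_h^\Gamma}\|\boldsymbol{q}-\Pi_T^{IFE}\boldsymbol{q}\|_{L^2(T)}^2.
\end{equation*}
On each $T\in\mathcal{T}_h^{non}$ the triangle lies entirely in one subdomain $\Omega^s$ and $\boldsymbol{q}|_T\in(H^1(T))^2$, so the standard Raviart--Thomas estimate $\|\boldsymbol{q}-\Pi_T\boldsymbol{q}\|_{L^2(T)}\le Ch_T|\boldsymbol{q}|_{H^1(T)}$ gives the desired $Ch\|\boldsymbol{q}\|_{H^1(\Omega^+\cup\Omega^-)}$ contribution after summation.

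The substantive work lies in the interface elements. On $T\in\mathcal{T}_h^\Gamma$ define the mismatch region $T_{mis}:=(T\cap\Omega^+\cap T_h^-)\cup(T\cap\Omega^-\cap T_h^+)$. Since $\Gamma\in C^2$ and $\Gamma_h\cap T$ is the chord connecting the two intersection points of $\Gamma$ with $\partial T$, we have $\mathrm{dist}(\Gamma_h\cap T,\Gamma\cap T)\le Ch_T^2$, hence $\bigcup_{T\in\mathcal{T}_h^\Gamma}T_{mis}\subset U(\Gamma,Ch^2)$. On $T\cap\Omega^s\cap T_h^s$ we have $\boldsymbol{q}=\boldsymbol{q}_E^s$ and $\Pi_T^{IFE}\boldsymbol{q}=(\Pi_T^{IFE}\boldsymbol{q})^s$, so the error is exactly $\boldsymbol{q}_E^s-(\Pi_T^{IFE}\boldsymbol{q})^s$. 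On the mismatch piece $T\cap\Omega^+\cap T_h^-$ I would write $\boldsymbol{q}_E^+-(\Pi_T^{IFE}\boldsymbol{q})^-=(\boldsymbol{q}_E^+-\boldsymbol{q}_E^-)+(\boldsymbol{q}_E^--(\Pi_T^{IFE}\boldsymbol{q})^-)$, and symmetrically on the other mismatch component. Applying the triangle inequality and enlarging the integration domain yields
\begin{equation*}
\|\boldsymbol{q}-\Pi_T^{IFE}\boldsymbol{q}\|_{L^2(T)}^2
\le C\sum_{s=+,-}\|\boldsymbol{q}_E^s-(\Pi_T^{IFE}\boldsymbol{q})^s\|_{L^2(T)}^2
+C\|\boldsymbol{q}_E^+-\boldsymbol{q}_E^-\|_{L^2(T_{mis})}^2.
\end{equation*}

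Summing over $T\in\mathcal{T}_h^\Gamma$, the first sum is bounded directly by Theorem~\ref{theo_inter1ps} and then by $C(\beta_{max}/\beta_{min})^2 h^2\|\boldsymbol{q}\|_{H^1(\Omega^+\cup\Omega^-)}^2$ via the extension estimate (\ref{extension}). For the mismatch sum, using $\bigcup T_{mis}\subset U(\Gamma,Ch^2)$ and estimate (\ref{strip1}) of Lemma~\ref{lem_strip} with $\delta=Ch^2$ gives
\begin{equation*}
\sum_{T\in\mathcal{T}_h^\Gamma}\|\boldsymbol{q}_E^+-\boldsymbol{q}_E^-\|_{L^2(T_{mis})}^2
\le \|\boldsymbol{q}_E^+-\boldsymbol{q}_E^-\|_{L^2(U(\Gamma,Ch^2))}^2
\le Ch^2\|\boldsymbol{q}\|_{H^1(\Omega^+\cup\Omega^-)}^2,
\end{equation*}
where again (\ref{extension}) controls the right-hand side. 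Taking a square root and combining with the non-interface contribution yields the claim. The step I expect to be the main obstacle is precisely the mismatch control: one must recognize that the region on which the mesh-based and physical partitions disagree has width $O(h^2)$, and that even though $[\boldsymbol{q}_E^+-\boldsymbol{q}_E^-]$ does not vanish along $\Gamma$ (only the normal jump of $\boldsymbol{q}$ and the tangential jump of $\beta\boldsymbol{q}$ do), the weaker strip inequality (\ref{strip1}) with $\delta=Ch^2$ already produces the optimal $O(h)$ rate, so the sharper inequality (\ref{strip2}) is not needed here.
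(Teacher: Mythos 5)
Your proposal is correct and follows essentially the same route as the paper's proof: the same split into non-interface and interface elements, the same triangle-inequality decomposition isolating the mismatch region $T^\triangle=(T^-\cap T_h^+)\cup(T^+\cap T_h^-)$, the same use of Theorem~\ref{theo_inter1ps} on interface elements, and the same application of the weaker strip estimate (\ref{strip1}) with $\delta=O(h^2)$ to the non-vanishing jump $\boldsymbol{q}_E^+-\boldsymbol{q}_E^-$. Your observation that (\ref{strip2}) is not needed here is exactly the point the paper exploits.
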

\begin{proof}
On each non-interface element $T\in\mathcal{T}_h^{non}$,  the following  estimate is standard,
\begin{equation}\label{pro_int2a}
\|\boldsymbol{q}-\Pi_{h}^{IFE}\boldsymbol{q}\|^2_{L^2(T)}=\|\boldsymbol{q}-\Pi_{h}\boldsymbol{q}\|^2_{L^2(T)}\leq Ch_T^2|\boldsymbol{q}|^2_{H^1(T)}.
\end{equation}
On each interface element $T\in\mathcal{T}_h^\Gamma$, define $T^s:=T\cap \Omega^s$, $s=+,-$.  Using the relations $T=T^+\cup T^-$ and $T^s=(T^s\cap T_h^+)\cup (T^s\cap T_h^-)$, $s=+,-$, we have
\begin{equation}\label{pro_int3a}
\begin{aligned}
\|\boldsymbol{q}-\Pi_{h}^{IFE}\boldsymbol{q}\|^2_{L^2(T)}&= \sum_{s=+,-}\|\boldsymbol{q}^s-(\Pi_{h}^{IFE}\boldsymbol{q})^s\|^2_{L^2(T^s\cap T_h^s)}\\
&+\|\boldsymbol{q}^--(\Pi_{h}^{IFE}\boldsymbol{q})^+\|^2_{L^2(T^-\cap T_h^+)} +\|\boldsymbol{q}^+-(\Pi_{h}^{IFE}\boldsymbol{q})^-\|^2_{L^2(T^+ \cap T_h^-)}.
\end{aligned}
\end{equation}
It follows from  the triangle inequality that 
\begin{equation}\label{pro_int4}
\begin{aligned}
&\|\boldsymbol{q}^--(\Pi_{h}^{IFE}\boldsymbol{q})^+\|^2_{L^2(T^-\cap T_h^+)}\leq 2\|\boldsymbol{q}^--\boldsymbol{q}_E^+\|^2_{L^2(T^-\cap T_h^+)}+2\|\boldsymbol{q}_E^+-(I_h^{IFE}\boldsymbol{q})^+\|^2_{L^2(T^-\cap T_h^+)},\\
&\|\boldsymbol{q}^+-(I_h^{IFE}\boldsymbol{q})^-\|^2_{L^2(T^+\cap T_h^-)}\leq 2\|\boldsymbol{q}^+-\boldsymbol{q}_E^-\|^2_{L^2(T^+\cap T_h^-)}+ 2\|\boldsymbol{q}_E^--(I_h^{IFE}\boldsymbol{q})^-\|^2_{L^2(T^+\cap T_h^-)}.
\end{aligned}
\end{equation}
Substituting (\ref{pro_int4}) into (\ref{pro_int3a}) yields 
\begin{equation}\label{pro_int5}
\|\boldsymbol{q}-\Pi_{h}^{IFE}\boldsymbol{q}\|^2_{L^2(T)}\leq  C\sum_{s=+,-}\|\boldsymbol{q}_E^s-(I_h^{IFE}\boldsymbol{q})^s\|^2_{L^2(T)}+C\|\boldsymbol{q}_E^+-\boldsymbol{q}_E^-\|^2_{L^2(T^\triangle)},
\end{equation}
where 
\begin{equation}\label{def_t_delta}
T^\triangle:=(T^-\cap T_h^+)\cup (T^+\cap T_h^-).
\end{equation} Since the interface is $C^2$-smooth, we conclude that, there exists a constant $C$ depending only on the maximum curvature of the interface $\Gamma$, such that $\mbox{dist}(\Gamma,\Gamma_h)\leq Ch_\Gamma$, which leads to 
\begin{equation}\label{pro_int6}
\bigcup_{T\in\mathcal{T}_h^\Gamma}T^\triangle\subset U(\Gamma,Ch_\Gamma^2)\subset U(\Gamma,\delta_0).
\end{equation}
Summing up the estimates (\ref{pro_int2a}) and (\ref{pro_int5}) over all elements $T\in\mathcal{T}_h$, and using (\ref{pro_int6}) , we have
\begin{equation*}
\begin{aligned}
&\sum_{T\in\mathcal{T}_h}\|\boldsymbol{q}-I_h^{IFE}\boldsymbol{q}\|^2_{L^2(T)}\\
&\leq Ch^2\sum_{T\in\mathcal{T}_h^{non}}|\boldsymbol{q}|^2_{H^1(T)}+C\sum_{T\in\mathcal{T}_h^\Gamma}\sum_{s=+,-}\|\boldsymbol{q}_E^s-(I_h^{IFE}\boldsymbol{q})^s\|^2_{L^2(T)}+C\|\boldsymbol{q}_E^+-\boldsymbol{q}_E^-\|^2_{L^2(U(\Gamma,Ch^2_\Gamma))}.
\end{aligned}
\end{equation*}
Applying Theorem~\ref{theo_inter1ps} and the estimate (\ref{strip1}) in Lemma~\ref{lem_strip}, we further have
\begin{equation*}
\begin{aligned}
\sum_{T\in\mathcal{T}_h}\|\boldsymbol{q}-I_h^{IFE}\boldsymbol{q}\|^2_{L^2(T)}&\leq C\frac{\beta^2_{max}}{\beta^2_{min}}h^2\sum_{s=+,-}\|\boldsymbol{q}^s_E\|^2_{H^1(U(\Gamma,\delta_0))}+Ch^2\|\boldsymbol{q}_E^+-\boldsymbol{q}_E^-\|^2_{H^1(U(\Gamma,\delta_0))}\\
&\leq C\frac{\beta^2_{max}}{\beta^2_{min}}h^2\sum_{s=+,-}\|\boldsymbol{q}_E^s\|^2_{H^1(U(\Gamma,\delta_0))}+Ch^2\sum_{s=+,-}\|\boldsymbol{q}_E^s\|^2_{H^1(U(\Gamma,\delta_0))},
\end{aligned}
\end{equation*}
which together with the extension result (\ref{extension}) completes the proof.
\end{proof}

\section{A priori error estimates for the IFE method}\label{sec_error}
For the purpose of error analysis, we introduce the following mesh dependent norms for the space $\mathcal{IRT}(\mathcal{T}_h)+H(\mathrm{div};  \Omega)$,
\begin{equation*}
\begin{aligned}
\| \boldsymbol{q} \| ^2:=\|\boldsymbol{q}\|^2_{L^2(\Omega)}+\eta\sum_{e\in\mathcal{E}_h^\Gamma}\|[\boldsymbol{q}\cdot\textbf{n}_e]_e\|^2_{L^2(e)}, \quad \interleave \boldsymbol{q} \interleave^2:=\| \boldsymbol{q} \| ^2+\sum_{T\in\mathcal{T}_h}\|\nabla \cdot \boldsymbol{q}\|^2_{L^2(T)}.
\end{aligned}
\end{equation*}
\begin{lemma}\label{lem_inter_newnorm}
If $\boldsymbol{q}\in \widetilde{H}^1(\mathrm{div};  \Omega)\cap H^2(\Omega^+\cup\Omega^-)$, then there exists constant $C$, independent of $h$, the penalty parameter $\eta$  and the interface location relative to the mesh, such that
\begin{equation}\label{inter_newnorm}
\| \boldsymbol{q}-\Pi_h^{IFE}\boldsymbol{q}\|  \leq Ch(\|\boldsymbol{q}\|_{H^1(\Omega^+\cup\Omega^-)}+\eta^{1/2}\sum_{s=+,-}\|\boldsymbol{q}\|_{H^2(\Omega_{\delta_0}^+\cup \Omega_{\delta_0}^- )}),
\end{equation}
where $\Omega_{\delta_0}^s=U(\Gamma,\delta_0)\cap\Omega^s$, $s=+,-$.
\end{lemma}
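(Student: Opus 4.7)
The plan is to control the two ingredients of the mesh-dependent norm separately: the bulk $L^2(\Omega)$ part, and the weighted jump sum $\eta\sum_{e\in\mathcal{E}_h^\Gamma}\|[\,\cdot\,]_e\|_{L^2(e)}^2$. The bulk part is immediate from Theorem~\ref{theo_inter_error}, contributing $Ch\|\boldsymbol{q}\|_{H^1(\Omega^+\cup\Omega^-)}$. The remaining work lies in proving
\begin{equation*}
\sum_{e\in\mathcal{E}_h^\Gamma}\bigl\|[(\boldsymbol{q}-\Pi_h^{IFE}\boldsymbol{q})\cdot\textbf{n}_e]_e\bigr\|_{L^2(e)}^2\le Ch^2\sum_{s=+,-}\|\boldsymbol{q}\|^2_{H^2(\Omega_{\delta_0}^+\cup\Omega_{\delta_0}^-)},
\end{equation*}
which, after multiplying by $\eta$ and taking square roots, produces the $\eta^{1/2}$ term.

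Since $\boldsymbol{q}\in \widetilde{H}^1(\mathrm{div};\Omega)\subset H(\mathrm{div};\Omega)$, its normal trace is continuous across every edge, so $[\boldsymbol{q}\cdot\textbf{n}_e]_e=0$ and the jump reduces to $[\Pi_h^{IFE}\boldsymbol{q}\cdot\textbf{n}_e]_e$. The key observation is to introduce an auxiliary piecewise-smooth function $\widetilde{\boldsymbol{q}}$ defined on each $T\in\mathcal{T}_h^\Gamma$ by $\widetilde{\boldsymbol{q}}|_{T_h^s}=\boldsymbol{q}_E^s$, using the Sobolev extensions from (\ref{extension}) (since $\boldsymbol{q}|_{\Omega^s}\in H^2$, these can be taken in $H^2$ as well). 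On an interface edge $e$ shared by two triangles $T_1,T_2$ (both necessarily interface triangles), the partition of $e$ into $e\cap\Omega_h^+$ and $e\cap\Omega_h^-$ is intrinsic to $e$, so on each piece both $\widetilde{\boldsymbol{q}}|_{T_1}$ and $\widetilde{\boldsymbol{q}}|_{T_2}$ equal the single function $\boldsymbol{q}_E^s$. Consequently $[\widetilde{\boldsymbol{q}}\cdot\textbf{n}_e]_e=0$, and
\begin{equation*}
[\Pi_h^{IFE}\boldsymbol{q}\cdot\textbf{n}_e]_e=[(\Pi_h^{IFE}\boldsymbol{q}-\widetilde{\boldsymbol{q}})\cdot\textbf{n}_e]_e,\qquad \|[\Pi_h^{IFE}\boldsymbol{q}\cdot\textbf{n}_e]_e\|_{L^2(e)}^2\leq C\sum_{j=1,2}\sum_{s=\pm}\|(\Pi_{T_j}^{IFE}\boldsymbol{q})^s-\boldsymbol{q}_E^s\|_{L^2(e)}^2.
\end{equation*}

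For each local quantity $\|(\Pi_T^{IFE}\boldsymbol{q})^s-\boldsymbol{q}_E^s\|_{L^2(e)}$, $T\in\mathcal{T}_h^\Gamma$, $e\subset\partial T$, I split into $[(\Pi_T^{IFE}\boldsymbol{q})^s-\Pi_T\boldsymbol{q}_E^s]+[\Pi_T\boldsymbol{q}_E^s-\boldsymbol{q}_E^s]$. The second (standard Raviart--Thomas interpolation error) is handled by the usual trace inequality $\|v\|_{L^2(e)}^2\leq C(h_T^{-1}\|v\|_{L^2(T)}^2+h_T|v|_{H^1(T)}^2)$ plus optimal $H^2$ estimates, giving $Ch_T^3|\boldsymbol{q}_E^s|_{H^2(T)}^2$. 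The first is a difference of polynomials on $T$, so the inverse inequality reduces it to $Ch_T^{-1}\|(\Pi_T^{IFE}\boldsymbol{q})^s-\Pi_T\boldsymbol{q}_E^s\|_{L^2(T)}^2$, and the latter is bounded using the decomposition in Lemma~\ref{lem_decomp} exactly as in the proof of Theorem~\ref{theo_inter1ps}, yielding a local bound in terms of $|\boldsymbol{q}_E^s|_{H^1(T)}^2$ plus interface-jump terms of order $h_T^{-2}\|[\![\boldsymbol{q}_E^\pm\cdot\textbf{n}]\!]\|_{L^2(T)}^2$ (and similarly for $\textbf{t}$).

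Summing over all interface triangles converts $\sum_{T\in\mathcal{T}_h^\Gamma}h_T|\boldsymbol{q}_E^s|_{H^1(T)}^2\leq h\,\|\boldsymbol{q}_E^s\|_{H^1(U(\Gamma,h_\Gamma))}^2$, and applying Lemma~\ref{lem_strip} with $v=\nabla\boldsymbol{q}_E^s\in H^1$ gives the extra factor $h_\Gamma\|\boldsymbol{q}_E^s\|_{H^2(U(\Gamma,\delta_0))}^2$. Likewise, because the interface jumps $[\![\boldsymbol{q}_E^\pm\cdot\textbf{n}]\!]$ and $[\![\beta^\pm\boldsymbol{q}_E^\pm\cdot\textbf{t}]\!]$ vanish on $\Gamma$, the inequality (\ref{strip2}) together with a gradient bound on these jumps provides two powers of $h_\Gamma$, absorbing the $h_T^{-2}$ and leaving $h^2\|\boldsymbol{q}\|_{H^2}^2$. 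The extension estimate (\ref{extension}) then yields the target bound in terms of $\|\boldsymbol{q}\|_{H^2(\Omega_{\delta_0}^+\cup\Omega_{\delta_0}^-)}$. The main obstacle is the edge-level estimate of the first difference $(\Pi_T^{IFE}\boldsymbol{q})^s-\Pi_T\boldsymbol{q}_E^s$: one must carefully re-exploit the decomposition of Lemma~\ref{lem_decomp} with the inverse inequality to land on the correct joint dependence, while using Lemma~\ref{lem_strip} in two independent places (once with $H^1\!\to\!H^2$ regularity, once with the vanishing-on-$\Gamma$ jumps) to produce the needed $h^2$ factor rather than only $h$.
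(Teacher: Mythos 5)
Your overall strategy is sound and rests on the same two pillars as the paper's proof: Theorem~\ref{theo_inter_error} for the $L^2(\Omega)$ part, and a reduction of the interface-edge jumps (using that $\boldsymbol{q}$ and the piecewise extension $\widetilde{\boldsymbol{q}}$ have no normal jump) to element-level interpolation errors that are then controlled by Lemma~\ref{lem_decomp}/Theorem~\ref{theo_inter1ps} together with the strip estimates of Lemma~\ref{lem_strip}. Where you genuinely differ is in how the edge norm is transferred to the element: you insert the standard interpolant $\Pi_T\boldsymbol{q}_E^s$ and treat the polynomial difference $(\Pi_T^{IFE}\boldsymbol{q})^s-\Pi_T\boldsymbol{q}_E^s$ with an inverse inequality, whereas the paper applies the trace inequality directly to $\boldsymbol{q}_E^s-(\Pi_h^{IFE}\boldsymbol{q})^s$ and disposes of the resulting $H^1$-seminorm term by the identity $|(\Pi_h^{IFE}\boldsymbol{q})^s|_{H^1(T)}=\tfrac12\|\nabla\cdot\Pi_h^{IFE}\boldsymbol{q}\|_{L^2(T)}=\tfrac12\|P^0_T(\nabla\cdot\boldsymbol{q})\|_{L^2(T)}\le\tfrac12\|\nabla\cdot\boldsymbol{q}\|_{L^2(T)}$, i.e.\ by the commuting diagram (\ref{commuting}); the paper's device is a little cleaner since it never re-enters the proof of Theorem~\ref{theo_inter1ps}, but both routes deliver the same bound with the same single power of $\eta^{1/2}h$. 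One intermediate claim of yours is wrong, though not fatally so: the lowest-order Raviart--Thomas interpolant is only first-order accurate in $L^2$, because $\mathcal{RT}(T)$ does not contain all of $(P_1)^2$ (e.g.\ $(x_1,0)\notin\mathcal{RT}(T)$), so the correct local bound is $\|\Pi_T\boldsymbol{q}_E^s-\boldsymbol{q}_E^s\|^2_{L^2(e)}\le Ch_T|\boldsymbol{q}_E^s|^2_{H^1(T)}$ rather than $Ch_T^3|\boldsymbol{q}_E^s|^2_{H^2(T)}$. The conclusion survives because summing $Ch_T|\boldsymbol{q}_E^s|^2_{H^1(T)}$ over $\mathcal{T}_h^\Gamma$ and applying (\ref{strip1}) to $\nabla\boldsymbol{q}_E^s$ --- exactly the mechanism you already invoke for the other terms --- still yields $Ch^2\|\boldsymbol{q}\|^2_{H^2(\Omega_{\delta_0}^+\cup\Omega_{\delta_0}^-)}$; this is in fact why the lemma needs the $H^2$ hypothesis at all.
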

\begin{proof}
First, by Theorem~\ref{theo_inter_error}, we have
\begin{equation}\label{pro_l2_nor}
\|\boldsymbol{q}-(\Pi_{h}^{IFE}\boldsymbol{q})\|_{L^2(\Omega)}\leq Ch\|\boldsymbol{q}\|_{H^1(\Omega^+\cup\Omega^-)}.
\end{equation}
By the definition of extensions (\ref{extension}), the standard trace lemma and Theorem~\ref{theo_inter1ps}, we get
\begin{equation}\label{pro_int_edge_nor}
\begin{aligned}
&\sum_{e\in\mathcal{E}_h^\Gamma}\|[(\boldsymbol{q}-\Pi_h^{IFE}\boldsymbol{q})\cdot\textbf{n}_e]_e\|^2_{L^2(e)}\leq \sum_{e\in\mathcal{E}_h^\Gamma}\sum_{s=+,-}\|[(\boldsymbol{q}_E^s-(\Pi_h^{IFE}\boldsymbol{q})^s)\cdot\textbf{n}_e]_e\|^2_{L^2(e)}\\
&\qquad\leq C\sum_{T\in\mathcal{T}_h^\Gamma}\sum_{s=+,-}\left(h_T^{-1}\|\boldsymbol{q}_E^s-(\Pi_h^{IFE}\boldsymbol{q})^s\|^2_{L^2(T)}+h_T|\boldsymbol{q}_E^s-(\Pi_h^{IFE}\boldsymbol{q})^s|^2_{H^1(T)}\right)\\
&\qquad\leq C\sum_{s=+,-}h_\Gamma\|\boldsymbol{q}_E^s\|^2_{H^1(U(\Gamma,h_\Gamma))}+C\sum_{T\in\mathcal{T}_h^\Gamma}\sum_{s=+,-}h_T|\boldsymbol{q}_E^s-(\Pi_h^{IFE}\boldsymbol{q})^s|^2_{H^1(T)}.
\end{aligned}
\end{equation}
Using the fact $(\nabla\cdot \Pi_h^{IFE}\boldsymbol{q})^+=(\nabla\cdot \Pi_h^{IFE}\boldsymbol{q})^-$ is a constant, we find
\begin{equation*}
|(\Pi_h^{IFE}\boldsymbol{q})^s|^2_{H^1(T)}=\frac{1}{2}\|\nabla\cdot(\Pi_h^{IFE}\boldsymbol{q})^s\|^2_{L^2(T)}=\frac{1}{2}\|\nabla\cdot(\Pi_h^{IFE}\boldsymbol{q})\|^2_{L^2(T)},
\end{equation*}
which together with  the property of the $L^2$ projection
$$\|P_T^0 v\|_{L^2(T)}\leq \|v\|_{L^2(T)}\quad \forall v\in L^2(T)$$ and the commuting diagram (\ref{commuting}) leads to
\begin{equation}\label{pro_inter_est_002}
|(\Pi_h^{IFE}\boldsymbol{q})^s|_{H^1(T)}=\frac{1}{2}\|\nabla \cdot  \Pi_h^{IFE}\boldsymbol{q}\|_{L^2(T)}=\frac{1}{2}\|P^0_T(\nabla\cdot\boldsymbol{q})\|_{L^2(T)}\leq \frac{1}{2}\|\nabla\cdot\boldsymbol{q}\|_{L^2(T)}.
\end{equation}
Therefore,
\begin{equation*}
|\boldsymbol{q}_E^s-(\Pi_h^{IFE}\boldsymbol{q})^s|_{H^1(T)}\leq |\boldsymbol{q}_E^s|_{H^1(T)}+|(\Pi_h^{IFE}\boldsymbol{q})^s|_{H^1(T)}\leq C|\boldsymbol{q}_E^s|_{H^1(T)},\quad s=+,-.
\end{equation*}
Substituting the above estimate into (\ref{pro_int_edge_nor}) and using (\ref{strip1}) in Lemma~\ref{lem_strip}, we obtain
\begin{equation*}
\begin{aligned}
\sum_{e\in\mathcal{E}_h^\Gamma}\|[(\boldsymbol{q}-\Pi_h^{IFE}\boldsymbol{q})\cdot\textbf{n}_e]_e\|^2_{L^2(e)}&\leq C\sum_{s=+,-}h_\Gamma\|\boldsymbol{q}_E^s\|^2_{H^1(U(\Gamma,h_\Gamma))}\leq C\sum_{s=+,-}h_\Gamma^2\|\boldsymbol{q}_E^s\|^2_{H^2(U(\Gamma,\delta_0))}.
\end{aligned}
\end{equation*}
Finally, combining (\ref{pro_l2_nor}), the above inequality and the extension result (\ref{extension}) yields the desired estimate (\ref{inter_newnorm}).
\end{proof}

It is easy to see that $b_h(\cdot,\cdot)=b(\cdot,\cdot)$ on the space $H(\mathrm{div};  \Omega)\times L^2(\Omega)$ and it is continuous:
\begin{equation}\label{bound_b}
b_h(\boldsymbol{q},v)\leq \interleave \boldsymbol{q}\interleave \|v\|_{L^2(\Omega)}\quad\forall (\boldsymbol{q}, v) \in (\mathcal{IRT}(\mathcal{T}_h)+H(\mathrm{div};  \Omega), L^2(\Omega)).
\end{equation}
Obviously, the bilinear form $a_h(\cdot,\cdot)$ is also continuous:
\begin{equation}\label{bound_a}
A_h(\boldsymbol{p},\boldsymbol{q})\leq \max\{\beta_{max},1\}\|\boldsymbol{p}\| \|\boldsymbol{q}\|\quad \forall \boldsymbol{p},\boldsymbol{q}\in \mathcal{IRT}(\mathcal{T}_h)+H(\mathrm{div};  \Omega).
\end{equation}
Let us introduce a kernel 
\begin{equation}\label{def_kernel}
\mathbb{K}_h:=\{\boldsymbol{q}_h\in\mathcal{IRT}(\mathcal{T}_h) : b_h(\boldsymbol{q}_h,v_h)=0 \quad\forall v_h\in M_h \},
\end{equation}
then we have the following coercivity  result.
\begin{lemma}\label{lem_coercivity_ah}
It holds that
\begin{equation*}
A_h(\boldsymbol{q}_h,\boldsymbol{q}_h)\geq \min\{\beta_{min},1\} \interleave \boldsymbol{q}_h \interleave^2 =\min\{ \beta_{min},1\}\|\boldsymbol{q}_h\|^2\qquad \forall\boldsymbol{q}_h\in \mathbb{K}_h.
\end{equation*}
\end{lemma}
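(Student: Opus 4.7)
The plan is to prove the two parts separately: first the identity $\interleave \boldsymbol{q}_h \interleave = \|\boldsymbol{q}_h\|$ on the kernel, then the coercivity bound for $A_h$.

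First I would observe that for any $\boldsymbol{q}_h \in \mathcal{IRT}(\mathcal{T}_h)$, the piecewise divergence $\nabla_h \cdot \boldsymbol{q}_h$ actually lies in $M_h$. On non-interface elements this is immediate because $\boldsymbol{q}_h|_T \in \mathcal{RT}(T)$. On interface elements $T \in \mathcal{T}_h^\Gamma$ the pieces $\boldsymbol{\phi}^\pm$ in \eqref{IFE_shape} each belong to $\mathcal{RT}(T)$, so $\nabla \cdot \boldsymbol{\phi}^+$ and $\nabla \cdot \boldsymbol{\phi}^-$ are constants, and the condition \eqref{IFE_con3} forces them to agree. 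Hence $(\nabla_h \cdot \boldsymbol{q}_h)|_T$ is a single constant on every $T \in \mathcal{T}_h$, so $\nabla_h \cdot \boldsymbol{q}_h \in M_h$.

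Next, for $\boldsymbol{q}_h \in \mathbb{K}_h$ I would test the kernel condition \eqref{def_kernel} against the admissible choice $v_h = \nabla_h \cdot \boldsymbol{q}_h \in M_h$ to obtain
\begin{equation*}
0 = b_h(\boldsymbol{q}_h, \nabla_h \cdot \boldsymbol{q}_h) = \int_\Omega (\nabla_h \cdot \boldsymbol{q}_h)^2 \, d\boldsymbol{x} = \sum_{T \in \mathcal{T}_h} \|\nabla \cdot \boldsymbol{q}_h\|_{L^2(T)}^2,
\end{equation*}
so the divergence contribution in $\interleave \cdot \interleave^2$ vanishes, giving $\interleave \boldsymbol{q}_h \interleave^2 = \|\boldsymbol{q}_h\|^2$ on $\mathbb{K}_h$.

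For the coercivity lower bound I would split $A_h = a_h + s_h$ and use the lower bound $\beta_h \geq \beta_{min}$ from \eqref{ext_beta} together with the definition of $s_h$:
\begin{equation*}
A_h(\boldsymbol{q}_h, \boldsymbol{q}_h) \geq \beta_{min} \|\boldsymbol{q}_h\|_{L^2(\Omega)}^2 + \eta \sum_{e \in \mathcal{E}_h^\Gamma} \|[\boldsymbol{q}_h \cdot \mathbf{n}_e]_e\|_{L^2(e)}^2 \geq \min\{\beta_{min},1\} \|\boldsymbol{q}_h\|^2,
\end{equation*}
where in the last step I factor $\min\{\beta_{min},1\}$ out of both summands (noting that the penalty parameter $\eta$ appears as a common factor on the jump terms in both $A_h$ and $\|\cdot\|^2$). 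Combined with the kernel identity from the previous step this yields the stated inequality.

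There is no real obstacle here; the only subtle point is verifying that $\nabla_h \cdot \boldsymbol{q}_h \in M_h$ so it is a legitimate test function in $\mathbb{K}_h$, and that relies crucially on the auxiliary condition \eqref{IFE_con3} imposed on interface IFE shape functions.
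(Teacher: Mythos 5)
Your proof is correct and follows essentially the same route as the paper: use \eqref{IFE_con3} to conclude $\nabla_h\cdot\boldsymbol{q}_h\in M_h$, test the kernel condition with $v_h=\nabla_h\cdot\boldsymbol{q}_h$ to get $\nabla_h\cdot\boldsymbol{q}_h=0$ and hence $\interleave\boldsymbol{q}_h\interleave=\|\boldsymbol{q}_h\|$, then bound $A_h$ below by $\beta_{min}\|\boldsymbol{q}_h\|_{L^2(\Omega)}^2$ plus the penalty term and factor out $\min\{\beta_{min},1\}$. No gaps.
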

\begin{proof}
For any $\boldsymbol{q}_h\in\mathcal{IRT}(\mathcal{T}_h)$, we know from (\ref{IFE_con3}) that  $\nabla \cdot \boldsymbol{q}_h|_{T}$ is a constant for all $T\in\mathcal{T}_h$.  Thus, $\nabla_h\cdot \boldsymbol{q}_h \in M_h$. Taking $v_h=\nabla_h \cdot \boldsymbol{q}_h$ in (\ref{def_kernel}) yields $\nabla_h  \cdot \boldsymbol{q}_h=0$. Therefore,
\begin{equation}\label{equ_kh}
\|\boldsymbol{q}_h\|^2=\interleave\boldsymbol{q}_h\interleave^2\qquad \forall\boldsymbol{q}_h\in \mathbb{K}_h,
\end{equation}
which together with 
\begin{equation*}
A_h(\boldsymbol{q}_h,\boldsymbol{q}_h)\geq\beta_{min}\|\boldsymbol{q}_h\|^2_{L^2(\Omega)}+\eta\sum_{e\in\mathcal{E}_h^\Gamma}\|[\boldsymbol{q}_h\cdot\textbf{n}_e]_e\|^2_{L^2(e)}\geq\min\{\beta_{min},1\}\|\boldsymbol{q}_h\|^2
\end{equation*}
completes the proof of this lemma.
\end{proof}

In order to prove the inf-sup condition for the bilinear form $b_h(\cdot,\cdot)$, we first state a property of the IFE operator $\Pi_h^{IFE}$ in the following lemma.
\begin{lemma}\label{lem_infsup_zhunbei}
Suppose $\mathcal{T}_h^\Gamma$ is quasi-uniform, i.e., $h_T^{-1}\leq Ch_\Gamma^{-1}$ for all $T\in\mathcal{T}_h^\Gamma$. Then there exists a constant $C_\Pi$, independent of $h$, the penalty parameter $\eta$ and the interface location relative to the mesh, such that
\begin{equation}\label{pro_inter_est_000}
\interleave \Pi_h^{IFE}\boldsymbol{q}\interleave \leq C_\Pi\frac{\beta_{max}}{\beta_{min}}\|\boldsymbol{q}\|_{H^1(
\Omega)}\qquad \forall \boldsymbol{q}\in (H^1(\Omega))^2.
\end{equation}
\end{lemma}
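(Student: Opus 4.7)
The plan is to bound the three contributions to $\interleave\Pi_h^{IFE}\boldsymbol{q}\interleave^2$ separately: the $L^2(\Omega)$ norm of $\Pi_h^{IFE}\boldsymbol{q}$, the broken divergence, and the penalized jump across interface edges. The main ingredients will be Lemma~\ref{lem_est_IFE} (pointwise stability of the IFE basis functions), the commuting diagram~(\ref{commuting}), and standard scaled trace/inverse inequalities. Throughout, all bounds must depend only on $\beta_{max}/\beta_{min}$, the shape-regularity parameter $\varrho$, and the quasi-uniformity of $\mathcal{T}_h^\Gamma$.

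First I would estimate the $L^2$ piece element by element. Every degree of freedom obeys the scaled trace bound
\begin{equation*}
|N_{i,T}(\boldsymbol{q})|^2\le Ch_T^{-1}\|\boldsymbol{q}\|^2_{L^2(e_i)}\le C\bigl(h_T^{-2}\|\boldsymbol{q}\|^2_{L^2(T)}+|\boldsymbol{q}|^2_{H^1(T)}\bigr).
\end{equation*}
On $T\in\mathcal{T}_h^{non}$, combining this with $\|\boldsymbol{\lambda}_{i,T}\|_{L^2(T)}\le Ch_T$ gives the classical stability $\|\Pi_T\boldsymbol{q}\|_{L^2(T)}\le C\|\boldsymbol{q}\|_{H^1(T)}$. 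On $T\in\mathcal{T}_h^\Gamma$, Lemma~\ref{lem_est_IFE} yields $\|\boldsymbol{\phi}_{i,T}\|_{L^2(T)}\le Ch_T\beta_{max}/\beta_{min}$, and expanding $\Pi_T^{IFE}\boldsymbol{q}=\sum_{i=1}^3 N_{i,T}(\boldsymbol{q})\boldsymbol{\phi}_{i,T}$ together with the DOF bound produces $\|\Pi_T^{IFE}\boldsymbol{q}\|_{L^2(T)}\le C(\beta_{max}/\beta_{min})\|\boldsymbol{q}\|_{H^1(T)}$. Summing over $T\in\mathcal{T}_h$ controls the $L^2$ piece. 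The broken divergence piece is then immediate from the commuting diagram~(\ref{commuting}): $\nabla_h\cdot\Pi_h^{IFE}\boldsymbol{q}=P_h^0(\nabla\cdot\boldsymbol{q})$, so by the $L^2$-contractivity of $P_h^0$ one gets
\begin{equation*}
\sum_{T\in\mathcal{T}_h}\|\nabla\cdot\Pi_h^{IFE}\boldsymbol{q}\|^2_{L^2(T)}\le\|\nabla\cdot\boldsymbol{q}\|^2_{L^2(\Omega)}\le\|\boldsymbol{q}\|^2_{H^1(\Omega)}.
\end{equation*}

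The jump piece is the main obstacle. Because $\boldsymbol{q}\in(H^1(\Omega))^2$ has a continuous normal trace across each edge, the identity $[\Pi_h^{IFE}\boldsymbol{q}\cdot\textbf{n}_e]_e=[(\Pi_h^{IFE}\boldsymbol{q}-\boldsymbol{q})\cdot\textbf{n}_e]_e$ holds on every $e\in\mathcal{E}_h^\Gamma$, so the estimate reduces to an $L^2(e)$ bound on the interpolation error. I would apply a scaled trace inequality to $\Pi_h^{IFE}\boldsymbol{q}-\boldsymbol{q}$ on the two neighbours of $e$, then use the interface-element bound from Theorem~\ref{theo_inter1ps} to replace the resulting $L^2(T)$ norms of the error by the $h_\Gamma$-small quantities controlled by $\|\boldsymbol{q}\|_{H^1(\Omega)}$, and finally invoke the quasi-uniformity $h_T^{-1}\le Ch_\Gamma^{-1}$ to combine everything. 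The delicate point, where I expect the argument to be the tightest, is keeping $C_\Pi$ independent of the penalty parameter $\eta$: each inverse-trace step naturally contributes an $h_\Gamma^{-1}$, so closure relies on using the zero-mean property $\int_e[\Pi_h^{IFE}\boldsymbol{q}\cdot\textbf{n}_e]_e\,ds=0$ built into the definition of $\mathcal{IRT}(\mathcal{T}_h)$, together with the continuity $[\boldsymbol{q}\cdot\textbf{n}_e]_e=0$, to recover the additional factor of $h$ needed to absorb the $\eta$-weight into the final constant.
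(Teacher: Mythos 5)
Your decomposition into the $L^2$ part, the broken divergence, and the penalized jump, and the tools you use for each (Lemma~\ref{lem_est_IFE} together with the scaled trace bound on the degrees of freedom, the commuting diagram with the $L^2$-contractivity of $P_h^0$, and a scaled trace inequality combined with Theorem~\ref{theo_inter1ps} and quasi-uniformity for the edge terms) is exactly the route the paper takes; rewriting the jump as $[(\Pi_h^{IFE}\boldsymbol{q}-\boldsymbol{q})\cdot\textbf{n}_e]_e$ instead of bounding $\|\Pi_h^{IFE}\boldsymbol{q}\|_{L^2(e)}$ directly and then adding and subtracting $\boldsymbol{q}_E^s$ is only a cosmetic difference.

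Two remarks on the edge term. First, Theorem~\ref{theo_inter1ps} is stated for $\boldsymbol{q}\in\widetilde{H}^1(\mathrm{div};\Omega)$, and a generic $\boldsymbol{q}\in (H^1(\Omega))^2$ does not satisfy the tangential condition $[\beta\boldsymbol{q}\cdot\textbf{t}]_\Gamma=0$, so the theorem does not literally apply; one must rerun its proof and observe that the tangential jump term $\|[\![(\beta_T^+-\beta_T^-)\boldsymbol{q}\cdot\textbf{t}]\!]\|^2_{L^2(U(\Gamma,h_\Gamma))}$ is now only $O(h_\Gamma)$ by (\ref{strip1}) rather than $O(h_\Gamma^2)$, which after multiplication by $h_T^{-1}\leq Ch_\Gamma^{-1}$ still yields the required $O(1)$ bound (the paper's own proof glosses over the same point). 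Second, your closing claim---that closure "relies on" recovering an additional factor of $h$ from the zero-mean property $\int_e[\Pi_h^{IFE}\boldsymbol{q}\cdot\textbf{n}_e]_e\,ds=0$ in order to absorb the $\eta$-weight---is off. No extra power of $h$ is available here: with only $H^1$ regularity the elementwise interpolation error is merely $O(h^{1/2})$ in $L^2(T)$, so the scaled trace is $O(1)$. And none is needed: the edge sum is bounded by $C(\beta_{max}/\beta_{min})^2\|\boldsymbol{q}\|^2_{H^1(\Omega)}$ with no negative power of $h$, which is exactly what $\interleave\cdot\interleave$ requires, since the jump term there carries the weight $\eta$ and not $\eta h^{-1}$; the zero-mean property plays no role in this lemma. (Whether the resulting constant is genuinely independent of $\eta$, as the statement asserts, is a separate wrinkle the paper leaves implicit---the jump contribution is $\eta\cdot C\|\boldsymbol{q}\|^2_{H^1(\Omega)}$---but that is not something your extra-$h$ mechanism would repair.)
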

\begin{proof}
On each interface element $T\in\mathcal{T}_h^\Gamma$, we have
\begin{equation*}
(\Pi_h^{IFE}\boldsymbol{q})|_{T}=\sum_{i=1}^3N_{i,T}(\boldsymbol{q})\boldsymbol{\phi}_{i,T}.
\end{equation*}
It follows from  Lemma~\ref{lem_est_IFE}, the  Cauchy-Schwarz inequality and the standard trace inequality that 
\begin{equation}\label{pro_inter_est_001}
\begin{aligned}
\|\Pi_h^{IFE}&\boldsymbol{q}\|^2_{L^2(T)}\leq C\frac{\beta^2_{max}}{\beta^2_{min}}h_T^2\sum_{i=1}^3\left(\int_{e_i}\boldsymbol{q}\cdot\textbf{n}_{i,T}ds\right)^2\leq C\frac{\beta^2_{max}}{\beta^2_{min}}h_T^3\sum_{i=1}^3 \|\boldsymbol{q}\cdot\textbf{n}_{i,T}\|^2_{L^2(e_i)}\\
&\leq C\frac{\beta^2_{max}}{\beta^2_{min}}h_T^2\sum_{i=1}^3\left(h_T^{-2} \|\boldsymbol{q}\cdot\textbf{n}_{i,T}\|^2_{L^2(T)}+|\boldsymbol{q}\cdot\textbf{n}_{i,T}|^2_{H^1(T)} \right)\leq C\frac{\beta^2_{max}}{\beta^2_{min}}\|\boldsymbol{q}\|^2_{H^1(T)},
\end{aligned}
\end{equation}
which together with a similar estimate on non-interface elements leads to
\begin{equation}\label{pro_stabi_l2}
\|\Pi_h^{IFE}\boldsymbol{q}\|^2_{L^2(\Omega)}\leq C\frac{\beta^2_{max}}{\beta^2_{min}}\|\boldsymbol{q}\|^2_{H^1(\Omega)}.
\end{equation}
On the other hand, using the standard trace inequality, we get
\begin{equation}\label{pro_sta_edge_fenk}
\begin{aligned}
&\sum_{e\in\mathcal{E}_h^\Gamma}\|\Pi_h^{IFE}\boldsymbol{q}\|^2_{L^2(e)}\leq \sum_{e\in\mathcal{E}_h^\Gamma}\sum_{s=+,-}\|(\Pi_h^{IFE}\boldsymbol{q})^s\|^2_{L^2(e)}\\
&\qquad \leq C\sum_{T\in\mathcal{T}_h^\Gamma}\sum_{s=+,-}h_T^{-1}\|(\Pi_h^{IFE}\boldsymbol{q})^s\|^2_{L^2(T)} +C\sum_{T\in\mathcal{T}_h^\Gamma}\sum_{s=+,-}h_T|(\Pi_h^{IFE}\boldsymbol{q})^s|^2_{H^1(T)}.
\end{aligned}
\end{equation}
By Theorem~\ref{theo_inter1ps}, the estimate (\ref{strip1}) in Lemma~\ref{lem_strip} and the extension result (\ref{extension}), the first term on the right-hand side can be estimate as
\begin{equation}\label{pro_sta_edge_fen_1}
\begin{aligned}
&\sum_{T\in\mathcal{T}_h^\Gamma}\sum_{s=+,-}h_T^{-1}\|(\Pi_h^{IFE}\boldsymbol{q})^s\|^2_{L^2(T)}=\sum_{T\in\mathcal{T}_h^\Gamma}\sum_{s=+,-}h_T^{-1}\|\boldsymbol{q}_E^s+(\Pi_h^{IFE}\boldsymbol{q})^s-\boldsymbol{q}_E^s\|^2_{L^2(T)} \\
&\qquad\leq Ch_\Gamma^{-1}\sum_{s=+,-}\|\boldsymbol{q}_E^s\|^2_{L^2(U(\Gamma,h_\Gamma))}+Ch_\Gamma^{-1}\sum_{T\in\mathcal{T}_h^\Gamma}\sum_{s=+,-} \|\boldsymbol{q}_E^s-(\Pi_h^{IFE}\boldsymbol{q})^s\|^2_{L^2(T)}\\
&\qquad\leq C\sum_{s=+,-}\|\boldsymbol{q}_E^s\|^2_{H^1(U(\Gamma,\delta_0))}+C\frac{\beta^2_{max}}{\beta^2_{min}}h_\Gamma\sum_{s=+,-} \|\boldsymbol{q}_E^s\|^2_{H^1(U(\Gamma,h_\Gamma))}\leq C\frac{\beta^2_{max}}{\beta^2_{min}}\|\boldsymbol{q}\|^2_{H^1(\Omega)}.
\end{aligned}
\end{equation}
The estimate (\ref{pro_inter_est_002}) gives the estimate for the second term on the right-hand side of (\ref{pro_sta_edge_fenk})
\begin{equation*}
\sum_{T\in\mathcal{T}_h^\Gamma}\sum_{s=+,-}h_T|(\Pi_h^{IFE}\boldsymbol{q})^s|^2_{H^1(T)}\leq Ch_\Gamma\sum_{T\in\mathcal{T}_h^\Gamma}| \boldsymbol{q}|^2_{H^1(T)}\leq C\|\boldsymbol{q}\|^2_{H^1(\Omega)},
\end{equation*}
which together with (\ref{pro_sta_edge_fen_1}) and (\ref{pro_sta_edge_fenk}) yields 
\begin{equation}\label{pro_stabi_edge}
\sum_{e\in\mathcal{E}_h^\Gamma}\|\Pi_h^{IFE}\boldsymbol{q}\|^2_{L^2(e)}\leq C\frac{\beta^2_{max}}{\beta^2_{min}}\|\boldsymbol{q}\|^2_{H^1(\Omega)}.
\end{equation}
The estimate (\ref{pro_inter_est_002}) also implies  
\begin{equation}\label{pro_div_edge}
\sum_{T\in\mathcal{T}_h}\|\nabla \cdot\Pi_h^{IFE}\boldsymbol{q}\|^2_{L^2(T)}\leq C\sum_{T\in\mathcal{T}_h}\|\nabla \cdot\boldsymbol{q}\|_{L^2(T)}\leq C\|\boldsymbol{q}\|^2_{H^1(\Omega)}.
\end{equation}
The desired result (\ref{pro_inter_est_000}) now follows from (\ref{pro_stabi_l2}), (\ref{pro_stabi_edge}) and (\ref{pro_div_edge}).
\end{proof}

\begin{lemma}\label{lem_infsup}
Under the condition of Lemma~\ref{lem_infsup_zhunbei}, the following inf-sup condition holds for a positive constant $\eta_*$  independent of the meshsize $h$, the penalty parameter  $\eta$ and the interface location relative to the mesh
\begin{equation}\label{lbbcon}
\sup_{\boldsymbol{q}_h\in \mathcal{IRT}(\mathcal{T}_h)}\frac{b_h(\boldsymbol{q}_h,v_h)}{ \interleave \boldsymbol{q}_h \interleave }\geq \eta_*\|v_h\|_{L^2(\Omega)}\qquad \forall v_h\in M_h.
\end{equation}
\end{lemma}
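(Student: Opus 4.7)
The plan is to establish the inf-sup condition by the classical Fortin trick: for each $v_h\in M_h$, construct an explicit $\boldsymbol{q}_h\in\mathcal{IRT}(\mathcal{T}_h)$ whose divergence equals $v_h$ and whose triple-bar norm is controlled by $\|v_h\|_{L^2(\Omega)}$. Because $M_h$ consists of piecewise constants and the commuting diagram~(\ref{commuting}) is already available, the natural candidate is $\boldsymbol{q}_h=\Pi_h^{IFE}\boldsymbol{q}$ for a well-chosen continuous lifting $\boldsymbol{q}$.

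First I would construct the lifting by solving a Poisson problem. Let $\psi\in H^1_0(\Omega)$ be the solution of $-\Delta\psi=v_h$ in $\Omega$. Since $\Omega$ is a convex polygonal domain and $v_h\in L^2(\Omega)$, standard elliptic regularity gives $\psi\in H^2(\Omega)$ together with $\|\psi\|_{H^2(\Omega)}\leq C\|v_h\|_{L^2(\Omega)}$ for a constant depending only on $\Omega$. Setting $\boldsymbol{q}:=-\nabla\psi\in (H^1(\Omega))^2$, I obtain $\nabla\cdot\boldsymbol{q}=v_h$ and $\|\boldsymbol{q}\|_{H^1(\Omega)}\leq C\|v_h\|_{L^2(\Omega)}$. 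In particular $\boldsymbol{q}\in W(\Omega)$ by the 2D Sobolev embedding $H^1(\Omega)\hookrightarrow L^s(\Omega)$, so $\Pi_h^{IFE}\boldsymbol{q}$ is well-defined.

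Next I would exploit the commuting diagram. Since $(\nabla_h\cdot\Pi_h^{IFE}\boldsymbol{q})|_T$ is a constant for every $T\in\mathcal{T}_h$ and $\nabla_h\cdot\Pi_h^{IFE}\boldsymbol{q}=P_h^0(\nabla\cdot\boldsymbol{q})=P_h^0 v_h=v_h$, it follows that
\begin{equation*}
b_h(\Pi_h^{IFE}\boldsymbol{q},v_h)=\int_\Omega v_h\,(\nabla_h\cdot\Pi_h^{IFE}\boldsymbol{q})\,d\boldsymbol{x}=\|v_h\|_{L^2(\Omega)}^2.
\end{equation*}
Combining this with the stability bound from Lemma~\ref{lem_infsup_zhunbei}, namely $\interleave\Pi_h^{IFE}\boldsymbol{q}\interleave\leq C_\Pi(\beta_{max}/\beta_{min})\|\boldsymbol{q}\|_{H^1(\Omega)}\leq C\|v_h\|_{L^2(\Omega)}$, I obtain
\begin{equation*}
\frac{b_h(\Pi_h^{IFE}\boldsymbol{q},v_h)}{\interleave \Pi_h^{IFE}\boldsymbol{q}\interleave}\geq \eta_*\|v_h\|_{L^2(\Omega)},
\end{equation*}
with $\eta_*$ depending only on $\beta_{min}$, $\beta_{max}$, $\Omega$ and the shape regularity parameter $\varrho$, and in particular independent of $h$, the penalty parameter $\eta$, and the interface location relative to the mesh.

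The main obstacle is not the Fortin construction itself but ensuring that the stability constant $\eta_*$ is genuinely independent of the interface location. This is precisely where Lemma~\ref{lem_infsup_zhunbei} does the heavy lifting, since it already contains the delicate interface-uniform estimate for $\Pi_h^{IFE}$ in the triple-bar norm (including the penalty jump term on $\mathcal{E}_h^\Gamma$). Once that lemma is in hand, the remainder of the argument is standard, and the only mild subtlety is checking that $\boldsymbol{q}=-\nabla\psi$ has sufficient regularity to apply $\Pi_h^{IFE}$ and to fit the $H^1$-right-hand side of~(\ref{pro_inter_est_000}); this is why the convexity of $\Omega$, which yields the required $H^2$-regularity of $\psi$, plays an implicit but essential role.
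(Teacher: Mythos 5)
Your proof is correct and follows essentially the same route as the paper: construct a lifting $\boldsymbol{w}\in (H^1(\Omega))^2$ with $\nabla\cdot\boldsymbol{w}=v_h$ and $\|\boldsymbol{w}\|_{H^1(\Omega)}\leq C\|v_h\|_{L^2(\Omega)}$, take $\boldsymbol{q}_h=\Pi_h^{IFE}\boldsymbol{w}$, use the commuting diagram to get $b_h(\Pi_h^{IFE}\boldsymbol{w},v_h)=\|v_h\|_{L^2(\Omega)}^2$, and close with the stability estimate of Lemma~\ref{lem_infsup_zhunbei}. The only cosmetic difference is that you build the lifting explicitly by solving $-\Delta\psi=v_h$ on the convex domain, whereas the paper simply cites the corresponding divergence-lifting lemma from Brenner--Scott.
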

\begin{proof}
Let $v_h$ be any function in $M_h\subset L^2(\Omega)$, then there is a function $\boldsymbol{w}$ satisfying (see Lemma 11.2.3 in \cite{brenner2008mathematical})
\begin{equation*}
\nabla \cdot \boldsymbol{w}=v_h~~\mbox{ and }~~ \|\boldsymbol{w}\|_{H^1(\Omega)}\leq C_{\Omega}\|v_h\|_{L^2(\Omega)}
\end{equation*}
with a constant $C_{\Omega}$ only depends on $\Omega$. It follows from the commuting diagram (\ref{commuting}) that 
\begin{equation}\label{pro_infsup_e1}
\begin{aligned}
b_h(\Pi_h^{IFE}\boldsymbol{w},v_h)&=\int_{\Omega} v_h\nabla_h \cdot \Pi_h^{IFE}\boldsymbol{w}d\boldsymbol{x}=\int_{\Omega} v_h P_h^0(\nabla \cdot \boldsymbol{w})d\boldsymbol{x}\\
&=\int_{\Omega} v_h \nabla \cdot \boldsymbol{w}d\boldsymbol{x}=\|v_h\|^2_{L^2(\Omega)}\geq \frac{1}{C_{\Omega}}\|\boldsymbol{w}\|_{H^1(\Omega)},
\end{aligned}
\end{equation}
which together with (\ref{pro_inter_est_000}) leads to
\begin{equation*}
\begin{aligned}
\sup_{\boldsymbol{q}_h\in \mathcal{IRT}(\mathcal{T}_h)}&\frac{b_h(\boldsymbol{q}_h,v_h)}{\interleave \boldsymbol{q}_h\interleave }\geq \frac{b_h(\Pi_h^{IFE}\boldsymbol{w},v_h)}{ \interleave \Pi_h^{IFE}\boldsymbol{w}\interleave }\geq \frac{\beta_{min}}{ C_\Pi \beta_{max}} \frac{b_h(\Pi_h^{IFE}\boldsymbol{w},v_h)}{\|\boldsymbol{w}\|_{H^1(\Omega)}}\geq
\frac{\beta_{min}}{ C_\Omega C_\Pi \beta_{max}}.
\end{aligned}
\end{equation*}
Thus, the inf-sup condition  (\ref{lbbcon}) is proved by choosing $\eta_*=\frac{\beta_{min}}{ C_\Omega C_\Pi \beta_{max}}$.
\end{proof}

\begin{theorem}\label{theo_error}
Suppose the condition of Lemma~\ref{lem_infsup_zhunbei} holds. The discrete method (\ref{mix_IFE}) has a unique solution $(\boldsymbol{p}_h,u_h)$. Let $(\boldsymbol{p},u)$  be the solution of problem (\ref{mix_weak}).  If $\boldsymbol{p}\in \widetilde{H}^1(\mathrm{div};  \Omega)\cap H^2(\Omega^+\cup\Omega^-)$, then there exists a constant $C$, independent of the meshsize $h$, the penalty parameter  $\eta$ and the interface location relative to the mesh, such that 
\begin{equation}\label{ulti_est}
\begin{aligned}
\|\boldsymbol{p}&-\boldsymbol{p}_h\|+\|u-u_h\|_{L^2(\Omega)}\\
&\leq Ch\left(\|\boldsymbol{p}\|_{H^1(\Omega^+\cup\Omega^-)}+\eta^{1/2}\|\boldsymbol{p}\|_{H^2(\Omega_{\delta_0}^+\cup \Omega_{\delta_0}^- )}+(1+\eta^{-1/2})\|u\|_{H^2(\Omega^+\cup\Omega^-)}\right),
\end{aligned}
\end{equation}
where $\Omega_{\delta_0}^s=U(\Gamma,\delta_0)\cap\Omega^s$, $s=+,-$.
\end{theorem}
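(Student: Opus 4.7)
The plan is to use a Strang-type error decomposition: interpolation part plus discrete part, with the latter controlled via the coercivity of $A_h$ on the kernel (Lemma~\ref{lem_coercivity_ah}) and the discrete inf-sup condition (Lemma~\ref{lem_infsup}), while the two inconsistency sources identified in (\ref{inconsis}) are estimated directly. Existence and uniqueness of $(\boldsymbol{p}_h,u_h)$ follow immediately from the standard Brezzi saddle-point theorem applied to the finite-dimensional problem (\ref{mix_IFE}), since both hypotheses (kernel coercivity and discrete inf-sup) are already in hand.

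\textbf{Flux error.} Set $\boldsymbol{\xi}_h := \Pi_h^{IFE}\boldsymbol{p} - \boldsymbol{p}_h$ and $\epsilon_h := P^0_h u - u_h$. Combining the commuting diagram (\ref{commuting}) with the second equation of (\ref{mix_IFE}) gives $\nabla_h\cdot \Pi_h^{IFE}\boldsymbol{p} = -P^0_h f = \nabla_h\cdot\boldsymbol{p}_h$, so $\nabla_h\cdot\boldsymbol{\xi}_h = 0$, $\boldsymbol{\xi}_h\in\mathbb{K}_h$ and $\interleave\boldsymbol{\xi}_h\interleave = \|\boldsymbol{\xi}_h\|$. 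Testing (\ref{inconsis}) with $\boldsymbol{q}_h = \boldsymbol{\xi}_h$ annihilates $b_h(\boldsymbol{\xi}_h, u - u_h)$ (the $u$-part because $\nabla_h\cdot\boldsymbol{\xi}_h=0$, the $u_h$-part because $u_h\in M_h$ and $\boldsymbol{\xi}_h\in\mathbb{K}_h$), leaving
\[A_h(\boldsymbol{\xi}_h,\boldsymbol{\xi}_h) = -A_h(\boldsymbol{p}-\Pi_h^{IFE}\boldsymbol{p},\boldsymbol{\xi}_h) + (a_h-a)(\boldsymbol{p},\boldsymbol{\xi}_h) + \sum_{e\in\mathcal{E}_h^\Gamma}\int_e u[\boldsymbol{\xi}_h\cdot\textbf{n}_e]_e\,ds.\]
Lemma~\ref{lem_coercivity_ah} bounds the left side from below by $\min\{\beta_{min},1\}\|\boldsymbol{\xi}_h\|^2$; continuity of $A_h$ together with Lemma~\ref{lem_inter_newnorm} bounds the first term on the right by $Ch(\|\boldsymbol{p}\|_{H^1(\Omega^+\cup\Omega^-)} + \eta^{1/2}\|\boldsymbol{p}\|_{H^2(\Omega_{\delta_0}^+\cup\Omega_{\delta_0}^-)})\|\boldsymbol{\xi}_h\|$; and since $\beta_h - \beta$ vanishes outside $\bigcup T^\triangle\subset U(\Gamma,Ch_\Gamma^2)$, applying the strip bound (\ref{strip1}) to $\boldsymbol{p}$ gives $|(a_h-a)(\boldsymbol{p},\boldsymbol{\xi}_h)|\leq Ch\|\boldsymbol{p}\|_{H^1(\Omega^+\cup\Omega^-)}\|\boldsymbol{\xi}_h\|$. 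Together with the nonconformity bound discussed below this yields the desired estimate on $\|\boldsymbol{\xi}_h\|$, and the triangle inequality with Lemma~\ref{lem_inter_newnorm} delivers the $\|\boldsymbol{p}-\boldsymbol{p}_h\|$ part of (\ref{ulti_est}).

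\textbf{Primal error.} Write $\|u-u_h\|_{L^2(\Omega)}\leq\|u-P^0_h u\|_{L^2(\Omega)} + \|\epsilon_h\|_{L^2(\Omega)}$. The first piece is $Ch\|u\|_{H^1(\Omega)}$ by the standard projection bound. For the second, the inf-sup condition (\ref{lbbcon}) reduces the task to bounding $b_h(\boldsymbol{q}_h,\epsilon_h)/\interleave\boldsymbol{q}_h\interleave$. Using the first equation of (\ref{mix_IFE}) to rewrite $b_h(\boldsymbol{q}_h,u_h) = -A_h(\boldsymbol{p}_h,\boldsymbol{q}_h)$ and the elementwise integration-by-parts identity (\ref{pro_inconsis}) to rewrite $b_h(\boldsymbol{q}_h,u)$, the residual organizes itself into exactly the same three pieces handled above ($A_h$-difference, coefficient mismatch $(a_h-a)(\boldsymbol{p},\boldsymbol{q}_h)$, and nonconformity sum), now paired with $\interleave\boldsymbol{q}_h\interleave$; the already-proved bound on $\|\boldsymbol{p}-\boldsymbol{p}_h\|$ absorbs the $A_h$ piece, producing (\ref{ulti_est}).

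\textbf{Main obstacle.} The critical ingredient in both estimates above is the nonconformity sum $\sum_{e\in\mathcal{E}_h^\Gamma}\int_e u[\boldsymbol{q}_h\cdot\textbf{n}_e]_e\,ds$. Here I exploit the built-in zero-mean property $\int_e[\boldsymbol{q}_h\cdot\textbf{n}_e]_e\,ds = 0$ for $\boldsymbol{q}_h\in\mathcal{IRT}(\mathcal{T}_h)$ to replace $u$ by $u - c_e$ for any edge-dependent constant $c_e$; choosing $c_e$ as the average of $u$ on a triangle $T_e$ adjacent to $e$, a trace inequality combined with Poincar\'e's inequality yields $\|u - c_e\|_{L^2(e)}\leq Ch^{1/2}|u|_{H^1(T_e)}$. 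Squaring, summing over $e\in\mathcal{E}_h^\Gamma$, and applying (\ref{strip1}) to $\nabla u$ on each side of $\Gamma$ upgrades this to $\bigl(\sum_{e}\|u - c_e\|^2_{L^2(e)}\bigr)^{1/2}\leq Ch\|u\|_{H^2(\Omega^+\cup\Omega^-)}$. Pairing via Cauchy-Schwarz with the $\eta^{1/2}$ weight in $\|\cdot\|$ gives $Ch\eta^{-1/2}\|u\|_{H^2(\Omega^+\cup\Omega^-)}\|\boldsymbol{q}_h\|$: this is precisely the origin of the $\eta^{-1/2}$ factor in (\ref{ulti_est}), and the reason over-penalization is required to balance it against the $\eta^{1/2}$ factor already appearing in Lemma~\ref{lem_inter_newnorm}.
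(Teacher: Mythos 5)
Your proposal is correct and follows essentially the same route as the paper: the error equation (\ref{inconsis}) restricted to the kernel $\mathbb{K}_h$ for the flux (your direct test with $\boldsymbol{\xi}_h$ is just a streamlined form of the paper's sup over $\mathbb{K}_h$ plus coercivity), the same three residual pieces (interpolation via Lemma~\ref{lem_inter_newnorm}, the $\beta_h-\beta$ mismatch via the strip bound on $T^\triangle$, and the nonconformity sum handled by subtracting edge constants to produce the $\eta^{-1/2}$ factor), and the same inf-sup duality argument for $\|u-u_h\|_{L^2}$. No gaps.
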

\begin{proof}
The well-posedness of the discrete problem (\ref{mix_IFE}) is a direct consequence of (\ref{bound_b}), (\ref{bound_a}), Lemma~\ref{lem_coercivity_ah} and Lemma~\ref{lem_infsup}. 

Next, we prove the error estimate (\ref{ulti_est}). Similar to (\ref{pro_infsup_e1}), it holds 
\begin{equation*}
b_h(\Pi_h^{IFE}\boldsymbol{p},v_h)=b(\boldsymbol{p},v_h)\qquad \forall v_h\in M_h.
\end{equation*}
By subtracting the second equations in each of (\ref{mix_weak}) and (\ref{mix_IFE}), we find
\begin{equation*}
b(\boldsymbol{p},v_h)-b_h(\boldsymbol{p}_h,v_h)=0\qquad \forall v_h\in M_h.
\end{equation*}
It follows from the above two equalities that $b_h(\Pi_h^{IFE}\boldsymbol{p}-\boldsymbol{p}_h,v_h)=0$ for all  $v_h\in M_h$, which implies 
\begin{equation*}
\Pi_h^{IFE}\boldsymbol{p}-\boldsymbol{p}_h\in \mathbb{K}_h.
\end{equation*}
Then, using the triangle inequality,  Lemma~(\ref{lem_coercivity_ah}) and the continuity (\ref{bound_a}),   we have
\begin{equation}\label{pro_l201}
\begin{aligned}
\|\boldsymbol{p}&-\boldsymbol{p}_h\|\leq \|\boldsymbol{p}-\Pi_h^{IFE}\boldsymbol{p}\|+\|\Pi_h^{IFE}\boldsymbol{p}-\boldsymbol{p}_h\|\\
&\leq \|\boldsymbol{p}-\Pi_h^{IFE}\boldsymbol{p}\| +C \sup_{\boldsymbol{w}_h\in \mathbb{K}_h\backslash\{0\} }\frac{\left|A_h(\Pi_h^{IFE}\boldsymbol{p}-\boldsymbol{p}_h,\boldsymbol{w}_h)\right|}{\|\boldsymbol{w}_h\|}\\
&=  \|\boldsymbol{p}-\Pi_h^{IFE}\boldsymbol{p}\|+C\sup_{\boldsymbol{w}_h\in \mathbb{K}_h\backslash\{0\} }\frac{\left|A_h(\Pi_h^{IFE}\boldsymbol{p}-\boldsymbol{p},\boldsymbol{w}_h)+A_h(\boldsymbol{p}-\boldsymbol{p}_h,\boldsymbol{w}_h)\right|}{\|\boldsymbol{w}_h\|}\\
&\leq C\|\boldsymbol{p}-\Pi_h^{IFE}\boldsymbol{p}\|+C\sup_{\boldsymbol{w}_h\in \mathbb{K}_h\backslash\{0\} }\frac{\left|A_h(\boldsymbol{p}-\boldsymbol{p}_h,\boldsymbol{w}_h)\right|}{\|\boldsymbol{w}_h\|}.
\end{aligned}
\end{equation}
The first term can be bounded by Lemma~\ref{lem_inter_newnorm}
\begin{equation}\label{pro_l202}
\|\boldsymbol{p}-\Pi_h^{IFE}\boldsymbol{p}\|\leq Ch\left(\|\boldsymbol{p}\|^2_{H^1(\Omega^+\cup\Omega^-)}+\eta\sum_{s=+,-}\|\boldsymbol{p}\|^2_{H^2\left(U (\Gamma, \delta_0)\cap \Omega^s\right) }\right)^{1/2}.
\end{equation}
We now consider the latter term. For all $\boldsymbol{w}_h\in\mathbb{K}_h$, we know from (\ref{def_kernel}) that  $b_h(\boldsymbol{w}_h,u_h)=b_h(\boldsymbol{w}_h,P^0_hu)=0$. Then,
 it follows from (\ref{inconsis}) that 
\begin{equation}\label{pro_l2_decom}
\begin{aligned}
\left|A_h(\boldsymbol{p}-\boldsymbol{p}_h,\boldsymbol{w}_h)\right|&\leq\left|b_h(\boldsymbol{w}_h,u-P_h^0u)\right|+\left|a_h(\boldsymbol{p},\boldsymbol{w}_h)-a(\boldsymbol{p},\boldsymbol{w}_h)\right|+\sum_{e\in\mathcal{E}_h^\Gamma}\left|\int_eu[\boldsymbol{w}_h\cdot\textbf{n}_{e}]_eds\right|.
\end{aligned}
\end{equation}
From (\ref{bound_b}) and  (\ref{equ_kh}), we can bound the first term on the right-hand side of (\ref{pro_l2_decom}) by 
\begin{equation}\label{pro_est001}
\left|b_h(\boldsymbol{w}_h,u-P_h^0u)\right|\leq \|\boldsymbol{w}_h\|_{L^2(\Omega)}\|u-P_h^0u\|_{L^2(\Omega)}\leq Ch|u|_{H^1(\Omega)}\|\boldsymbol{w}_h\|_{L^2(\Omega)}.
\end{equation}
For the second term on the right-hand side of (\ref{pro_l2_decom}),  recalling the definition of  $T^\triangle$ in (\ref{def_t_delta}) and using (\ref{pro_int6}),  (\ref{strip1}) and the extension result (\ref{extension}),  we have
\begin{equation}\label{pro_est002}
\begin{aligned}
&\left|a_h(\boldsymbol{p},\boldsymbol{w}_h)-a(\boldsymbol{p},\boldsymbol{w}_h)\right|=\left|\int_\Omega(\beta_h-\beta)\boldsymbol{p}\cdot\boldsymbol{w}_hd\boldsymbol{x}\right|\leq \sum_{T\in\mathcal{T}_h^\Gamma}\|\beta^+-\beta^-\|_{L^\infty(T^\triangle)}\int_{T^\triangle}\left|\boldsymbol{p}\cdot\boldsymbol{w}_h\right|d\boldsymbol{x}\\
&\qquad\qquad\qquad\leq C \|\boldsymbol{p}\|_{L^2(U(\Gamma,Ch_\Gamma^2))}\|\boldsymbol{w}_h\|_{L^2(\Omega)}\leq \sum_{s=+,-}C \|\boldsymbol{p}^s_E\|_{L^2(U(\Gamma,Ch_\Gamma^2))}\|\boldsymbol{w}_h\|_{L^2(\Omega)} \\
&\qquad\qquad\qquad\leq \sum_{s=+,-}C h_\Gamma\|\boldsymbol{p}^s_E\|_{H^1(U(\Gamma,\delta_0))}\|\boldsymbol{w}_h\|_{L^2(\Omega)} \leq Ch_\Gamma\|\boldsymbol{p}\|_{H^1(\Omega^+\cup\Omega^-)}\|\boldsymbol{w}_h\|_{L^2(\Omega)}.
\end{aligned}
\end{equation}
Note that $\int_e[\boldsymbol{w}_h\cdot\textbf{n}_{e}]_eds=0$ for all $e\in\mathcal{E}_h$, then the third term  
on the right-hand side of (\ref{pro_l2_decom}) can be written as 
\begin{equation}\label{pro_l2e_01}
\begin{aligned}
\sum_{e\in\mathcal{E}_h^\Gamma}\left|\int_eu[\boldsymbol{w}_h\cdot\textbf{n}_{e}]_eds\right|&=\sum_{e\in\mathcal{E}_h^\Gamma}\left|\int_e(u-c_e)[\boldsymbol{w}_h\cdot\textbf{n}_{e}]_eds\right|\\
&\leq \left(\sum_{e\in\mathcal{E}_h^\Gamma} \|u-c_e\|^2_{L^2(e)}\right)^{1/2}\left(\sum_{e\in\mathcal{E}_h^\Gamma} \|[\boldsymbol{w}_h\cdot\textbf{n}_{e}]_e\|^2_{L^2(e)}\right)^{1/2}\\
&\leq \left(\sum_{e\in\mathcal{E}_h^\Gamma} \|u-c_e\|^2_{L^2(e)}\right)^{1/2}\eta^{-1/2}\|\boldsymbol{w}_h\|,
\end{aligned}
\end{equation}
where $c_e$ is an arbitrary constant on the edge $e$.  By the standard trace inequality, Lemma~(\ref{lem_strip}) and the extension result (\ref{extension}), 
\begin{equation}\label{pro_l2e_02}
\begin{aligned}
&\sum_{e\in\mathcal{E}_h^\Gamma}\|u-c_e\|^2_{L^2(e)}\leq C\sum_{T\in\mathcal{T}_h^\Gamma}\left(h_T^{-1}\|u-P^0_Tu\|^2_{L^2(T)}+h_T|u|^2_{H^1(T)}\right)\leq Ch_\Gamma|u|^2_{H^1(U(\Gamma,h_\Gamma))}\\
&\qquad\leq Ch_\Gamma\sum_{s=+,-}|u_E^s|^2_{H^1(U(\Gamma,h_\Gamma))}\leq Ch_\Gamma^2\sum_{s=+,-}\|u_E^s\|^2_{H^2(U(\Gamma,\delta_0))}\leq Ch_\Gamma^2\|u\|^2_{H^2(\Omega^+\cup\Omega^-)}.
\end{aligned}
\end{equation}
Therefore, we get the following error estimate on interface edges
\begin{equation}\label{pro_est003}
\sum_{e\in\mathcal{E}_h^\Gamma}\left|\int_eu[\boldsymbol{w}_h\cdot\textbf{n}_{e}]_eds\right|\leq C\eta^{-1/2}h_\Gamma\|u\|_{H^2(\Omega^+\cup\Omega^-)}\|\boldsymbol{w}_h\|.
\end{equation}
Substituting (\ref{pro_est001}), (\ref{pro_est002}) and (\ref{pro_est003}) into (\ref{pro_l2_decom}), we have
\begin{equation*}
\left|A_h(\boldsymbol{p}-\boldsymbol{p}_h,\boldsymbol{w}_h)\right| \leq Ch\left(\|\boldsymbol{p}\|_{H^1(\Omega^+\cup\Omega^-)}+(1+\eta^{-1/2})\|u\|_{H^2(\Omega^+\cup\Omega^-)}\right) \|\boldsymbol{w}_h\|
\end{equation*}
which together with (\ref{pro_l201}) and (\ref{pro_l202}) yields the desired estimate
\begin{equation}\label{pro_est1}
\|\boldsymbol{p}-\boldsymbol{p}_h\| \leq Ch\left(\|\boldsymbol{p}\|_{H^1(\Omega^+\cup\Omega^-)}+\eta^{1/2}\|\boldsymbol{p}\|_{H^2(\Omega_{\delta_0}^+\cup \Omega_{\delta_0}^- )}+(1+\eta^{-1/2})\|u\|_{H^2(\Omega^+\cup\Omega^-)}\right).
\end{equation}

Finally, we derive the error estimate for the solution $u_h$. It follows from the triangle inequality, the inf-sup condition~(\ref{lbbcon}) and the continuity (\ref{bound_b}) that 
\begin{equation}\label{ulti001}
\begin{aligned}
&\|u-u_h\|_{L^2(\Omega)}\leq \|u-P_h^0u\|_{L^2(\Omega)}+\|P_h^0u-u_h\|_{L^2(\Omega)}\\
&\qquad \leq \|u-P_h^0u\|_{L^2(\Omega)}+C\sup_{\boldsymbol{q}_h\in\mathcal{IRT}(\mathcal{T}_h)}\frac{\left|b_h(\boldsymbol{q}_h,P_h^0u-u_h)\right|}{\interleave \boldsymbol{q}_h\interleave}\\
&\qquad \leq \|u-P_h^0u\|_{L^2(\Omega)}+C\sup_{\boldsymbol{q}_h\in\mathcal{IRT}(\mathcal{T}_h)}\frac{\left|b_h(\boldsymbol{q}_h,P_h^0u-u)+b_h(\boldsymbol{q}_h,u-u_h)\right|}{\interleave \boldsymbol{q}_h\interleave }\\
&\qquad \leq C\|u-P_h^0u\|_{L^2(\Omega)}+C\sup_{\boldsymbol{q}_h\in\mathcal{IRT}(\mathcal{T}_h)}\frac{\left|b_h(\boldsymbol{q}_h,u-u_h)\right|}{\interleave \boldsymbol{q}_h\interleave }.
\end{aligned}
\end{equation}
The first term can be bounded easily 
\begin{equation}\label{ulti002}
\|u-P_h^0u\|_{L^2(\Omega)}\leq Ch|u|_{H^1(\Omega)}.
\end{equation}
For the second term, we know form (\ref{inconsis}) that  
\begin{equation*}
\left|b_h(\boldsymbol{q}_h,u-u_h)\right|\leq \left|A_h(\boldsymbol{p}-\boldsymbol{p}_h,\boldsymbol{q}_h)\right| +\left|a_h(\boldsymbol{p},\boldsymbol{q}_h)-a(\boldsymbol{p},\boldsymbol{q}_h)\right|+\sum_{e\in\mathcal{E}_h^\Gamma}\left|\int_eu[\boldsymbol{q}_h\cdot\textbf{n}_{e}]_eds\right|.
\end{equation*}
Similar to (\ref{pro_est002}) and (\ref{pro_est003}), it holds
\begin{equation}\label{ulti003}
\begin{aligned}
&\left|a_h(\boldsymbol{p},\boldsymbol{q}_h)-a(\boldsymbol{p},\boldsymbol{q}_h)\right|+\sum_{e\in\mathcal{E}_h^\Gamma}\left|\int_eu[\boldsymbol{q}_h\cdot\textbf{n}_{e}]_eds\right|\\
&\qquad \leq  Ch_\Gamma\|\boldsymbol{p}\|_{H^1(\Omega^+\cup\Omega^-)}\|\boldsymbol{q}_h\|_{L^2(\Omega)}+C\eta^{-1/2}h_\Gamma\|u\|_{H^2(\Omega^+\cup\Omega^-)}\|\boldsymbol{q}_h\|.
\end{aligned}
\end{equation}
On the other hand, (\ref{bound_a}) and (\ref{pro_est1}) imply 
\begin{equation}\label{ulti004}
\begin{aligned}
&\left|A_h(\boldsymbol{p}-\boldsymbol{p}_h,\boldsymbol{q}_h)\right|\leq C\|\boldsymbol{p}-\boldsymbol{p}_h\|\|\boldsymbol{q}_h\| \\
&\qquad\leq Ch\left(\|\boldsymbol{p}\|_{H^1(\Omega^+\cup\Omega^-)}+\eta^{1/2}\|\boldsymbol{p}\|_{H^2(\Omega_{\delta_0}^+\cup \Omega_{\delta_0}^- )}+(1+\eta^{-1/2})\|u\|_{H^2(\Omega^+\cup\Omega^-)}\right)\|\boldsymbol{q}_h\|.
\end{aligned}
\end{equation}
Combining (\ref{ulti001})-(\ref{ulti004}) and using the fact $ \| \boldsymbol{q}_h \|\leq \interleave \boldsymbol{q}_h \interleave $, we get the desired estimate
\begin{equation*}
\|u-u_h\|_{L^2(\Omega)}\leq Ch\left(\|\boldsymbol{p}\|_{H^1(\Omega^+\cup\Omega^-)}+\eta^{1/2}\|\boldsymbol{p}\|_{H^2(\Omega_{\delta_0}^+\cup \Omega_{\delta_0}^- )}+(1+\eta^{-1/2})\|u\|_{H^2(\Omega^+\cup\Omega^-)}\right).
\end{equation*}
\end{proof}
\begin{remark}
From Theorem~\ref{theo_error}, we know the penalty $s_h(\cdot,\cdot)$ with $\eta>0$ is necessary to ensure the optimal convergence rates which is confirmed in the next section, although the IFE method is stable without the penalty (i.e., $\eta=0$) in view of Lemma \ref{lem_coercivity_ah} and Lemma~\ref{lem_infsup}. Throughout the proof, we find the issue is caused by the inequality (\ref{pro_l2e_01}) which does not hold if $\eta=0$.
\end{remark}

\section{Numerical examples}\label{sec_num}
 In this section, we present some numerical examples to validate the theoretical analysis. We also compare the proposed immersed Raviart-Thomas mixed finite element method (Immersed RT) with the traditional Raviart-Thomas mixed finite element method (Traditional RT) which reads: find $(\boldsymbol{p}_h,u_h)\in \mathcal{RT}(\mathcal{T}_h)\times M_h$ such that
\begin{equation*}
\begin{aligned}
a_h(\boldsymbol{p}_h,\boldsymbol{q}_h)+b_h(\boldsymbol{q}_h,u_h)&=0 \qquad &&\forall \boldsymbol{q}_h\in \mathcal{RT}(\mathcal{T}_h),\\
b_h(\boldsymbol{p}_h,v_h)&=F(v_h)\qquad &&\forall v_h\in M_h,
\end{aligned}
\end{equation*}
where $\mathcal{RT}(\mathcal{T}_h)$ is the standard Raviart-Thomas space defined in (\ref{rt_space}).
For simplicity, we take $\Omega=(-1,1)\times(-1,1)$ as the computational domain and use uniform triangulations constructed as follows. We first partition the domain into $N\times N$ congruent rectangles, and then obtain the triangulation by cutting the rectangles along one of diagonals in the same direction. We only report the errors $\|\boldsymbol{p}-\boldsymbol{p}_h\|_{L^2}$ and the corresponding convergence rates.  The numerical results for $u_h$ are not listed  because they are almost the same and the corresponding convergence rates are  $O(h)$  for different methods.

 \textbf{Example 1}. We first consider a benchmark example from \cite{Li2003new} which has been used in many articles.
The interface is $\Gamma=\{(x_1,x_2)\in \mathbb{R}^2: x_1^2+x_2^2=r_0^2\}$ which separates $\Omega$ into $\Omega^-=\{(x_1,x_2)\in \mathbb{R}^2: x_1^2+x_2^2<r_0^2\}$ and $\Omega^+=\{(x_1,x_2)\in \Omega: x_1^2+x_2^2>r_0^2\}$.
The exact solution to the interface problem is chosen as
\begin{equation*}
u(\boldsymbol{x})=\left\{
\begin{aligned}
&(x_1^2+x_2^2)^{3/2}/\widetilde{\beta}^-\qquad &\mbox{ in }\Omega^-,\\
&(x_1^2+x_2^2)^{3/2}/\widetilde{\beta}^++\left(1/\widetilde{\beta}^--1/\widetilde{\beta}^+\right)r_0^3 &\mbox{ in }\Omega^+,
\end{aligned}
\right.
\end{equation*}
where $r_0=0.5$, $\widetilde{\beta}^+=10^{-2}$ and $\widetilde\beta^-=1$.


\begin{table}[H]
\caption{The $\|\boldsymbol{p}-\boldsymbol{p}_h\|_{L^2}$ errors and  convergence rates   for \textbf{Example 1}.\label{ex1_p}}
\begin{center}
\begin{tabular}{|c|c c|c c|c c|}
  \hline
  & \multicolumn{2}{c|}{Traditional RT} & \multicolumn{2}{c|}{Immersed RT ($\eta=0$)}& \multicolumn{2}{c|}{ Immersed RT  ($\eta=1$)}\\ \hline
       $N$  &  $\|\boldsymbol{p}-\boldsymbol{p}_h\|_{L^2}$   &  rate   &  $\|\boldsymbol{p}-\boldsymbol{p}_h\|_{L^2}$ &  rate  &  $\|\boldsymbol{p}-\boldsymbol{p}_h\|_{L^2}$  &  rate     \\ \hline
      8  & 3.033E-01 &             & 3.770E-01  &           &  3.649E-01  &                \\ \hline
    16  & 1.477E-01  &    1.04 & 3.068E-01  &    0.30 & 2.763E-01  &    0.40       \\ \hline
    32  & 7.322E-02  &    1.01 &  2.980E-01 &    0.04 & 2.002E-01  &    0.46      \\ \hline
    64  & 3.653E-02  &    1.00 & 1.772E-01  &    0.75 & 9.581E-02   &   1.06       \\ \hline
  128  & 1.824E-02  &    1.00 & 1.546E-01  &    0.20 & 6.386E-02   &   0.59       \\ \hline
  256  & 9.115E-03   &   1.00 & 1.105E-01   &   0.48 & 3.117E-02    &  1.03        \\ \hline
  512  & 4.554E-03   &   1.00 & 7.855E-02  &    0.49 & 1.502E-02    &  1.05       \\ \hline
\end{tabular}
\end{center}
\end{table}

The numerical results are presented in Table~\ref{ex1_p}.  We observe the optimal convergence rates for the immersed  Raviart-Thomas mixed finite element method with $\eta=1$ and suboptimal convergence rates for the method without penalty, i.e., $\eta=0$, which are in good agreement with Theorem~\ref{theo_error}.
 
From  the second column of Table~\ref{ex1_p}, it is surprising that the solution of the traditional Raviart-Thomas mixed finite element method also converges optimally even if unfitted meshes are used for this interface problem. Below we explain what is happening.
We find that the exact solution of this example is a constant along the interface $\Gamma$. Thus, the tangential derivative of the exact solution is zero on the interface, i.e.,  $(\nabla u\cdot {\rm\mathbf{t}})|_{\Gamma}=0$, which implies a special interface condition: $[\boldsymbol{p}\cdot \mathbf{t}]_\Gamma=[\widetilde{\beta}\nabla u\cdot \mathbf{t}]_\Gamma=0$. On the other hand, the standard Raviart-Thomas functions also satisfy this interface condition, i.e., $[\boldsymbol{p}_h\cdot \mathbf{t}]_\Gamma=0$ for all  $\boldsymbol{p}_h\in\mathcal{RT}(\mathcal{T}_h)$. We have test many other numerical examples from the literature satisfying $(\nabla u\cdot {\rm\mathbf{t}})|_{\Gamma}=0$ and similar optimal convergence rates have also been observed.
 
 \textbf{Example 2}.  In order to show the suboptimal convergence of the traditional Raviart-Thomas mixed finite element method, we test an example with  $(\nabla u\cdot {\rm\mathbf{t}})|_{\Gamma}\not=0$ which was constructed in \cite{2021ji_nonconform}.  The interface is $\Gamma=\{(x_1,x_2)\in \mathbb{R}^2: x_1^2+x_2^2=r_0^2\}$ which separates $\Omega$ into $\Omega^-=\{(x_1,x_2)\in \mathbb{R}^2: x_1^2+x_2^2<r_0^2\}$ and $\Omega^+=\{(x_1,x_2)\in \Omega: x_1^2+x_2^2>r_0^2\}$. Let $(r,\theta)$ be the polar coordinate of  $\boldsymbol{x}=(x_1,x_2)$. The exact solution is chosen as $u(\boldsymbol{x})=j(\boldsymbol{x})v(\boldsymbol{x})\sin(\theta)$, where
\begin{equation*}
j(\boldsymbol{x})=\left\{
\begin{aligned}
&\exp\left(-\frac{1}{1-(r-r_0)^2/\eta^2}\right)~&\mbox{ if } |r-r_0|<\eta,\\
&0 &\mbox{ if } |r-r_0|\geq\eta,
\end{aligned}\right.
\end{equation*}
and
\begin{equation*}
v(\boldsymbol{x})=\left\{
\begin{aligned}
&1+(r^2-r_0^2)/\widetilde{\beta}^+~&\mbox{ if } \boldsymbol{x}\in\Omega^+,\\
&1+(r^2-r_0^2)/\widetilde{\beta}^-~&\mbox{ if } \boldsymbol{x}\in\Omega^-.
\end{aligned}\right.
\end{equation*}
Let $r_0=0.5$, $\eta=0.45$, $\widetilde{\beta}^+=10^{-2}$ and $\widetilde\beta^-=1$. It is easy to verify that the jump conditions (\ref{p1.2})-(\ref{p1.3}) are satisfied and $\nabla u\cdot {\rm\mathbf{t}}\not=0$ on $\Gamma$.


\begin{table}[H]
\caption{The $\|\boldsymbol{p}-\boldsymbol{p}_h\|_{L^2}$ errors and  convergence rates   for \textbf{Example 2}.\label{ex2_p}}
\begin{center}
\begin{tabular}{|c|c c|c c|c c|}
  \hline
  & \multicolumn{2}{c|}{Traditional RT} & \multicolumn{2}{c|}{Immersed RT ($\eta=0$)}& \multicolumn{2}{c|}{ Immersed RT  ($\eta=1$)}\\ \hline
       $N$  &  $\|\boldsymbol{p}-\boldsymbol{p}_h\|_{L^2}$   &  rate   &  $\|\boldsymbol{p}-\boldsymbol{p}_h\|_{L^2}$ &  rate  &  $\|\boldsymbol{p}-\boldsymbol{p}_h\|_{L^2}$  &  rate     \\ \hline
      8  &  6.946E-01 &           & 6.595E-01    &            & 6.563E-01   &              \\ \hline
    16  & 4.107E-01  &    0.76 & 3.069E-01  &    1.10  & 3.069E-01   &   1.10       \\ \hline
    32  & 2.100E-01   &   0.97 & 1.652E-01  &    0.94  & 1.438E-01   &   1.09      \\ \hline
    64  & 1.376E-01   &   0.61 & 8.953E-02  &    0.88  & 7.081E-02   &   1.02       \\ \hline
  128  & 8.944E-02   &   0.62 & 6.300E-02  &    0.51  & 3.817E-02   &   0.89       \\ \hline
  256  & 6.056E-02   &   0.56 & 4.180E-02  &    0.59  & 1.888E-02   &   1.02       \\ \hline
  512  & 4.190E-02   &   0.53 & 2.851E-02  &    0.55  & 9.331E-03  &    1.02       \\ \hline
\end{tabular}
\end{center}
\end{table}

Numerical results presented in Table~\ref{ex2_p} clearly show the optimal convergence rates for the immersed Raviart-Thomas mixed finite element method with $\eta=1$ and suboptimal convergence rates for other methods.


 \section{Conclusions}\label{sec_con}
In this paper, we have constructed an immersed  Raviart-Thomas finite element and proposed a corresponding mixed finite element method for solving second-order elliptic interface problems on unfitted meshes. We have shown that the immersed  Raviart-Thomas finite element space is nonconforming and a penalty term on the interface edges is necessary to ensure the optimal convergence.  Some important properties of the immersed  Raviart-Thomas finite element space have also been derived including the unisolvence of IFE basis functions, the optimal approximation capabilities of the IFE space and the corresponding commuting digram.  Moreover, we have proved the inf-sup condition of the proposed IFE method and derived the optimal error estimates which are confirmed by numerical examples.

\bibliographystyle{plain}

\end{document}